\newtheorem{Lemma}{Lemma}[section]
\newtheorem{Theorem}[Lemma]{Theorem}
\newtheorem{Proposition}[Lemma]{Proposition}
\newtheorem{Corollary}[Lemma]{Corollary}
\newtheorem{Remark}[Lemma]{Remark}
\newtheorem{Definition}[Lemma]{Definition}
\def\bfG{\mathbf G}
\def\R{\mathbb R}
\def\C{\mathbb C}
\def\Z{\mathbb Z}
\newcommand{\p}{\partial}
\newcommand{\<}{\langle}
\renewcommand{\>}{\rangle}
\renewcommand{\div}{\operatorname{div}}
\newcommand{\Vol}{\operatorname{Vol}}
\newcommand{\Id}{\operatorname{Id}}
\newcommand{\Hess}{\operatorname{Hess}}
\begin{document}
\title[The attenuated thermostat ray transforms]{The attenuated ray transforms on Gaussian thermostats with negative curvature}

\author[Yernat M. Assylbekov]{Yernat M. Assylbekov}
\address{Department of Computational and Applied Mathematics, Rice University, Houston, TX 77005, USA}
\email{yernat.assylbekov@gmail.com}
\author[Franklin T. Rea]{Franklin T. Rea}
\address{Department of Mathematics, University of North Carolina, Chapel Hill, NC 27514, USA}
\email{franklin.t.rea@gmail.com}

\begin{abstract}
In the present paper we consider a Gaussian thermostat on a compact Riemannian surface with negative thermostat curvature. In the case of surfaces with boundary, we show that the thermostat ray transform with attenuation given by a general connection and Higgs field is injective, modulo the natural obstruction, for tensors. We also prove that the connection and Higgs field can be determined, up to a gauge transformation, from the knowledge of the parallel transport between boundary points along all possible thermostat geodesics. In the case of closed surfaces, we obtain similar results for the ray transform with some additional conditions on the connection. Under the same condition, we study connections and Higgs fields whose parallel transport along periodic thermostat geodesics coincides with the ones for the flat connection.
\end{abstract}

\maketitle


\section{Introduction and statement of results}\label{sec:: intro}

\subsection{Gaussian Thermostats}
Let $(M,g)$ be a compact oriented Riemannian manifold and let $E$ be a smooth vector field on $M$ which we refer to as the \emph{external field}. We say that a parameterized curve $\gamma(t)$ on $M$ is a \emph{thermostat geodesic} if it satisfies the equation
\begin{equation}\label{eqn::Gaussian thermostat}
D_t\dot \gamma=E(\gamma)-\frac{\<E(\gamma),\dot\gamma\>}{|\dot\gamma|^2}\dot\gamma,
\end{equation}
where $D_t$ is the covariant derivative along $\gamma$. This differential equation defines a flow $\phi_t=(\gamma(t),\dot{\gamma}(t))$ on the unit sphere bundle $SM$ which is called a \emph{Gaussian thermostat}. We will also refer to $\phi_t$ as the \emph{thermostat flow}. The flow $\phi_t$ reduces to the geodesic flow when $E=0$. As in the case of geodesic flows, thermostat flows are reversible in the sense that the flip $(x, v)\mapsto(x,-v)$ conjugates $\phi_t$ with $\phi_{-t}$. We denote the Gaussian thermostat by $(M,g,E)$ and the generating vector field of $\phi_t$ by $\bfG_E$.

Gaussian thermostats were first introduced in \cite{hoover1986molecular} by Hoover who studied a class of dynamical systems in mechanics with thermostatic term satisfying \emph{Gauss' least Constraint Principle} for nonholonomic constraints. It is known that such constraints do not have variational principles in general.  Interesting models in non-equilibrium statistical mechanics are provided by Gaussian thermostats \cite{gallavotti1997srb,gallavotti1999new,ruelle1999smooth}. In this model, temperature fluctuations prevented by maintaining a constant kinetic energy. More precisely, the external field $E$ in \eqref{eqn::Gaussian thermostat} contributes to the kinetic energy. But the second term in \eqref{eqn::Gaussian thermostat} is the ``thermostat", which damps the component of $E$ parallel to the velocity of the trajectory, and, therefore, retains the kinetic energy to be constant. For this reason, Gaussian thermostats are also known as \emph{isokinetic dynamics}.

Gaussian thermostats also arise in Weyl geometry. More precisely, when the geodesics of a Weyl manifold are reparameterized with respect to the arc-length, they turn out to be identical with thermostat geodesics; see \cite{przytycki2008gaussian}. Weyl geometry has been actively studied by many authors in mathematics \cite{calderbank2001einstein,folland1970weyl,higa1993weyl,mettler2018convex} with applications to static Yang-Mills-Higgs theory and twistor theory \cite{hitchin1982complex,hitchin1982monopoles,jones1985minitwistor}. See also \cite{scholz2018unexpected} for the applications of the space-time version of Weyl geometry in the theories of gravity, quantum mechanics, elementary particle physics, and cosmology. 

More recently, some inverse problems for Gaussian thermostats were addressed in \cite{assylbekov2018general,assylbekov2017invariant,dairbekov2007entropy,dairbekov2007entropy2}. Dynamical and geometrical properties were studied in \cite{assylbekov2014hopf,mettler2018convex,mettler2019holomorphic,paternain2006regularity, wojtkowski2000magnetic,wojtkowski2000w}.

In the present paper we study several geometric inverse problems for Gaussian thermostats when $M$ is a surface. In this setting, the equation \eqref{eqn::Gaussian thermostat} can be rewritten as
\begin{equation}\label{def::thermostat 2D and lambda}
D_t\dot\gamma=\lambda(\gamma,\dot\gamma)i\dot\gamma,\qquad \lambda(x,v):=\<E(x),iv\>.
\end{equation}

\begin{Definition}{\rm For a given Gaussian thermostat $(M,g,E)$, the \emph{thermostat curvature} is the quantity $K_E:= K-\div_g E$, where $K$ is the Gaussian curvature of the surface $(M,g)$.
}\end{Definition}

We note that $K_E$ is precisely the sectional curvature of the Weyl connection in two dimensions \cite{wojtkowski2000w}. We will mainly focus on Gaussian thermostats with negative thermostat curvature.

\subsection{Connections and Higgs fields}
We define a \emph{connection} on the trivial bundle $M\times \C^n$ as a matrix-valued smooth map $A:TM\to \mathfrak{gl}(n,\C)$ which is linear in $v\in T_x M$ for a fixed $x\in M$, and define a \emph{Higgs field} as a matrix-valued smooth map $\Phi:M\to \mathfrak{gl}(n,\C)$. The connection $A$ induces a covariant derivative which acts on sections of $M\times \C^n$ by $d_A:=d+A$. Pairs of connections and Higgs fields $(A,\Phi)$ are very important in the Yang-Mills-Higgs theories; see \cite{dunajski2010solitons,hitchin1999integrable,manton2004topological,mason1996integrability}. It is natural to consider connections and Higgs fields for Gaussian thermostats, since the latter, as mentioned before, is also related to Yang-Mills-Higgs theories via Weyl geometry.

\subsection{Main results on surfaces with boundary}\label{sec::surfaces with boundary}
Let $(M,g)$ be a smooth compact and oriented Riemannian surface with boundary $\p M$ and let $E$ be an external field. We consider the set of inward and outward unit vectors on the boundary of $M$ defined as
$$
\p_\pm SM=\{(x,v)\in SM:x\in\p M,\pm\langle v,\nu(x)\rangle\ge0\},
$$
where $\nu$ is the inward unit normal to $\p M$. The thermostat geodesics entering $M$ can be parametrized by $\p_+SM$. We assume that $(M,g,E)$ is \emph{non-trapping}, i.e. for any $(x,v)\in SM$ the first non-negative time $\tau(x,v)$ when the thermostat geodesic $\gamma_{x,v}$, with $x=\gamma_{x,v}(0)$, $v=\dot\gamma_{x,v}(0)$, exits $M$ is finite.

For the rest of the paper, if $\p M$ is non-empty, we will work with \emph{strictly convex} Gaussian thermostats, i.e. the ones satisfying 
$$
\Lambda(x,v)>\<E(x),\nu(x)\>_{g(x)}\quad\text{for all}\quad(x,v)\in S(\p M),
$$
where $\Lambda$ is the second fundamental form of $\p M$; see Appendix~\ref{sec::convexity} for explanation and more details.

For a pair of connection and Higgs field $(A,\Phi)$ on the trivial bundle $M\times\C^n$, we have a scattering data defined as follows. Consider the unique solution $U_{A,\Phi}:SM\to GL(n,\C)$ for the transport equation
\begin{equation}\label{eqn::transport equation for U}
(\bfG_E+A+\Phi)U_{A,\Phi}=0\quad\text{in}\quad SM,\qquad U_{A,\Phi}|_{\p_+ SM}=\Id.
\end{equation}
\begin{Definition}{\rm
The \emph{scattering data} of a pair of connection and Higgs field $(A,\Phi)$ is the map $C_{A,\Phi}:\p_- SM\to GL(n,\C)$ defined as
$$
C_{A,\Phi}(x,v):=U_{A,\Phi}(x,v),\quad (x,v)\in \p_- SM.
$$
}\end{Definition}

We are interested in the inverse problem of determining $(A,\Phi)$ from the knowledge of $C_{A,\Phi}$. However, there is a natural gauge invariance for the scattering data $C_{A,\Phi}$. Indeed, suppose that $Q\in C^\infty(M;GL(n,\C))$ such that $Q|_{\p M}=\Id$ and $U_{A,\Phi}$ satisfies \eqref{eqn::transport equation for U}. Then, using the fact $d(Q^{-1})Q=-Q^{-1}dQ$, it is not difficult to see that $Q^{-1}U_{A,\Phi}$ satisfies
$$
\big(\bfG_E+Q^{-1}(d+A)Q+Q^{-1}\Phi Q\big)(Q^{-1}U_{A,\Phi})=0\text{ in }SM,\quad Q^{-1}U_{A,\Phi}|_{\p_+ SM}=\Id.
$$
Thus, we have
$$
C_{Q^{-1}(d+A)Q,Q^{-1}\Phi Q}=(Q^{-1}U_{A,\Phi})|_{\p_- SM}=U_{A,\Phi}|_{\p_- SM}=C_{A,\Phi}.
$$
Therefore, we can only hope to recover $(A,\Phi)$ from $C_{A,\Phi}$ up to such an obstruction. Our first main result says that it is possible when the thermostat curvature $K_E$ is negative.

\begin{Theorem}\label{th::main 2}
Let $(M,g,E)$ be a strictly convex and non-trapping Gaussian thermostat on a compact oriented surface with boundary such that $K_E<0$. Let also $A,B:TM\to \mathfrak{gl}(n,\C)$ be two connections and $\Phi,\Psi:M\to \mathfrak{gl}(n,\C)$ be two Higgs fields. If $C_{A,\Phi}=C_{B,\Psi}$, then $B=Q^{-1}(d+A)Q$ and $\Psi=Q^{-1}\Phi Q$ for some $Q\in C^\infty(M;GL(n,\C))$ such that $Q|_{\p M}=\Id$.
\end{Theorem}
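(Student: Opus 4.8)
We follow the standard strategy for scattering rigidity of connections, with the injectivity of the attenuated thermostat ray transform (proved above, where the hypothesis $K_E<0$ is really used) carrying the analytic content. First I would set $U:=U_{A,\Phi}\,U_{B,\Psi}^{-1}\colon SM\to GL(n,\C)$. From $U_{A,\Phi}|_{\p_+SM}=U_{B,\Psi}|_{\p_+SM}=\Id$ together with $U_{A,\Phi}|_{\p_-SM}=C_{A,\Phi}=C_{B,\Psi}=U_{B,\Psi}|_{\p_-SM}$ we get $U|_{\p SM}=\Id$; strict convexity and non-trapping are used here to ensure that $U_{A,\Phi},U_{B,\Psi}$, and hence $U$, are smooth up to $\p SM$. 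Differentiating along the thermostat flow and using $\bfG_E(U_{B,\Psi}^{-1})=U_{B,\Psi}^{-1}(B+\Psi)$ gives \[ \bfG_E U+(A+\Phi)U-U(B+\Psi)=0,\qquad U|_{\p SM}=\Id. \] The theorem follows once $U$ is shown to be independent of $v$: if $U=Q(x)$, then, since $\bfG_E$ applied to a pullback from $M$ returns the total derivative $dQ(v)$, separating the transport equation into its part linear in $v$ and its part independent of $v$ yields $dQ(v)+A(v)Q-QB(v)=0$ and $\Phi Q=Q\Psi$, that is $B=Q^{-1}(d+A)Q$ and $\Psi=Q^{-1}\Phi Q$, with $Q|_{\p M}=\Id$ and $Q\in C^\infty(M;GL(n,\C))$.

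To show $U$ is fibrewise constant I would work on the endomorphism bundle $M\times\C^{n^2}$. With $W:=U-\Id$ the equation becomes $(\bfG_E+\mathbb{A}+\mathbb{P})W=F$, $W|_{\p SM}=0$, where $\mathbb{A}u:=Au-uB$ is a connection and $\mathbb{P}u:=\Phi u-u\Psi$ a Higgs field on $M\times\C^{n^2}$, and $F:=(B-A)+(\Psi-\Phi)$ is the restriction to $SM$ of the sum of a $1$-form and a function, hence has fibrewise Fourier modes only in degrees $-1,0,1$. The condition $W|_{\p_+SM}=0$ identifies $W$ with the solution of the attenuated transport equation with source $F$, so $W|_{\p_-SM}=0$ says precisely that the attenuated thermostat ray transform $I_{\mathbb{A},\mathbb{P}}F$ vanishes. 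Since $K_E<0$, the injectivity result for the attenuated ray transform, applied to the pair $(\mathbb{A},\mathbb{P})$ on $M\times\C^{n^2}$ and to this symbol of fibrewise degree $\le1$, produces $h\in C^\infty(M;\C^{n^2})$ with $h|_{\p M}=0$, $B-A=dh+\mathbb{A}h$ and $\Psi-\Phi=\mathbb{P}h$.

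Finally, $(\bfG_E+\mathbb{A}+\mathbb{P})(W-h)=0$ and $(W-h)|_{\p_+SM}=0$, so integrating this linear ODE backwards along the thermostat geodesics — each of finite length by non-trapping — forces $W=h$. Thus $U=\Id+h$ depends only on $x$, and by the first paragraph $Q:=U\in C^\infty(M;GL(n,\C))$ satisfies $Q|_{\p M}=\Id$, $B=Q^{-1}(d+A)Q$ and $\Psi=Q^{-1}\Phi Q$, which is the claim. Apart from quoting the ray transform injectivity — which is where the real work sits: a Pestov-type energy identity using $K_E<0$, holomorphic integrating factors, and the finite-degree argument — the point that needs care is the smoothness of $U$ up to $\p SM$, which is exactly what strict convexity of the thermostat provides; note also that identifying $Q$ with $U$, rather than with the a priori merely smooth primitive $h$, is what makes $Q$ invertible.
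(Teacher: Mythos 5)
Your proposal is correct and follows essentially the same pseudolinearization route as the paper: form $U_{A,\Phi}U_{B,\Psi}^{-1}$, subtract the identity, recognize the resulting transport equation as the vanishing of the attenuated ray transform $I_{\mathbf A,\mathbf\Phi}$ of the degree-one symbol $(A-B)+(\Phi-\Psi)$ on the endomorphism bundle, invoke Theorem~\ref{th::main 1}, and conclude by uniqueness of the transport solution that the integrating factor equals the (fibrewise constant) primitive. The only cosmetic difference is that the paper does not need a priori smoothness of $U$ up to $\p(SM)$ — it only needs the boundary identity $U|_{\p(SM)}=\Id$ to see that the ray transform vanishes, after which regularity is recovered inside the proof of Theorem~\ref{th::main 1} via Proposition~\ref{prop::u^f is smooth if If=0}.
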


Similar result was earlier established in \cite{paternain2018carleman} for geodesic flows in arbitrary dimensions. For earlier related results in the case of geodesic flows, see \cite{eskin2004nonabelian,finch2001connection,guillarmou2016negconnections,novikov2002determination, paternain2012attenuated,paternain2016geodesic,sharafutdinov2000inverse,zhou2017generic}. In the presence of a magnetic field some positive result is given in \cite{ainsworth2013attenuated}.

The proof is based on the reduction of the problem to an integral geometry problem. This motivates us to study attenuated ray transforms along thermostat geodesics which is the central object of the paper.

Given $f\in C^\infty(SM;\C^n)$, consider the following transport equation for $u:SM\to \C^n$
$$
(\mathbf G_E+A+\Phi)u=-f\quad\text{in}\quad SM,\qquad u\big|_{\p_- SM}=0.
$$
Here $A$ and $\Phi$ act on functions on $SM$ by matrix multiplication. This equation has a unique solution $u^f$, since on any fixed thermostat geodesic the transport equation is a linear system of ordinary differential equations with zero initial condition.
\begin{Definition}{\rm
The \emph{\bfseries attenuated thermostat ray transform} $I_{A,\Phi} f$ of $f\in C^\infty(SM;\C^n)$, with attenuation given by a connection $A:TM\to \mathfrak{gl}(n,\C)$ and a Higgs field $\Phi:M\to \mathfrak{gl}(n,\C)$, is defined as
$$
I_{A,\Phi} f:=u^f\big|_{\p_+ SM}.
$$}
\end{Definition}

It is clear that a general $f\in C^\infty(SM;\C^n)$ cannot be determined by its attenuated thermostat ray transform, since $f$ depends on more variables than $I_{A,\Phi} f$. Moreover, one can easily see that the functions of the following type are always in the kernel of $I_{A,\Phi}$
$$
(\mathbf G_E+A+\Phi)p,\quad p\in C^\infty(SM;\C^n)\text{ such that }p|_{\p(SM)}=0.
$$
However, one often encounters $I_{A,\Phi}$ acting on functions on $SM$ arising from symmetric tensor fields on $M$. We will further consider this particular case.

Let $f_{i_1\cdots i_m}(x)\,dx^{i_1}\otimes\cdots\otimes\,dx^{i_m}$ be a smooth $\C^n$-valued symmetric $m$-tensor field on $M$. Then $f$ induces the corresponding function $f$ on $SM$, defined by
$$
f(x,v):=f_{i_1\cdots i_m}(x)v^{i_1}\cdots v^{i_m},\quad (x,v)\in SM.
$$
We denote by $C^\infty(S^m_{M};\C^n)$ the space of smooth $\C^n$-valued symmetric (covariant) $m$-tensor fields on $M$. We will also use the notations $C^\infty(\Lambda^1_{M};\C^n)$ and $C^\infty(M;\C^n)$ for $C^\infty(S^1_{M};\C^n)$ and $C^\infty(S^0_{M};\C^n)$, respectively. In what follows, we will identify $f\in C^\infty(S^m_{M};\C^n)$ with its corresponding induced function $f\in C^\infty(SM;\C^n)$.


For a given $[f,h]\in C^\infty(S^m_{M};\C^n)\times C^\infty(S^{m-1}_{M};\C^n)$, $m\ge 1$, we define
$$
I_{A,\Phi}[f,h]:=I_{A,\Phi}f+I_{A,\Phi}h .
$$
This operator also has non-trivial kernel since $I_{A,\Phi}[\bfG_E p+Ap,\Phi p]=0$ for all $p\in C^\infty(S^{m-1}_M;\C^n)$ such that $p|_{\p M}=0$. In the present paper, we are interested in the question whether these are the only kernels in $I_{A,\Phi}$. Our second main result gives an affirmative answer provided that the thermostat curvature is negative.

\begin{Theorem}\label{th::main 1}
Let $(M,g,E)$ be a strictly convex and non-trapping Gaussian thermostat on a compact oriented surface with boundary such that $K_E<0$. Let also $A:TM\to \mathfrak{gl}(n,\C)$ be a connection and $\Phi:M\to \mathfrak{gl}(n,\C)$ be a Higgs field. Assume that $[f,h]\in C^\infty(S^m_{M};\C^n)\times C^\infty(S^{m-1}_{M};\C^n)$, $m\ge 1$. If $I_{A,\Phi}[f,h]=0$, then $f=\bfG_E p+Ap$ and $h=\Phi p$ for some $p\in C^\infty(S^{m-1}_{M};\C^n)$ with $p|_{\p M}=0$.
\end{Theorem}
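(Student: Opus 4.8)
The plan is to follow the by-now standard scheme for attenuated ray transforms on surfaces (as in Paternain--Salo--Uhlmann and the Carleman-estimate approach of Paternain--Salo), adapted to the thermostat flow $\bfG_E$ using the Fourier analysis on the fibers of $SM$. First I would split $\bfG_E=X+\lambda V$, where $X$ is the geodesic vector field, $V$ is the vertical vector field, and $\lambda(x,v)=\langle E(x),iv\rangle$; together with the horizontal field $X_\perp$ these satisfy the usual structure equations with the extra terms coming from $\lambda$ (this is where the thermostat curvature $K_E=K-\div_g E$ enters the commutator $[X,X_\perp]$). Expanding $u^{[f,h]}$ in Fourier modes $u=\sum_k u_k$ along the circle fibers, the transport equation $(\bfG_E+A+\Phi)u=-f$ becomes a coupled system in the modes; since $f$ comes from an $m$-tensor and $h$ from an $(m-1)$-tensor, the right-hand side has Fourier modes of degree $\le m$, so one wants to prove that the solution $u$ itself is a finite sum $u=\sum_{|k|\le m-1}u_k$, i.e.\ a \emph{holomorphic plus antiholomorphic} content statement upgraded to finite degree.

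The core of the argument is an energy identity / Carleman-type estimate for the operator $\bfG_E+A+\Phi$ that controls the high Fourier modes of $u$. Concretely, I would derive a Pestov-type identity adapted to the connection: for the modified operators $\eta_\pm$ (the $\bar\partial$ and $\partial$ operators on $SM$ twisted by the $(0,1)$ and $(1,0)$ parts of $A$ and by $\lambda$), one obtains an inequality of the form $\|\eta_- w\|^2 \ge (\text{curvature term})\|w\|^2$ on functions $w$ of fixed high degree, where the curvature term is governed by $K_E<0$. The negativity of $K_E$ makes this term coercive on modes of degree $\ge 1$ (or $\ge$ some threshold), which forces all sufficiently high modes of $u$ to vanish; an induction downward in the degree, using the boundary condition $u|_{\p_-SM}=0$ together with convexity and non-trapping so that the relevant integrations by parts have no boundary contribution of the wrong sign, then shows $u=\sum_{|k|\le m-1}u_k$. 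I expect this coercivity step — getting the constant in the Pestov identity right in the presence of both the Higgs field $\Phi$ (which shifts Fourier degree by $0$) and the attenuation $A$, and checking that the threshold is low enough to handle the $m$-tensor case rather than just scalars — to be the main obstacle; the Higgs field in particular is not antisymmetric-friendly and must be absorbed carefully, which is presumably why the earlier scalar/connection results needed separate treatment.

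Once $u=u^{[f,h]}$ is known to have degree $\le m-1$, the last step is algebraic: set $p:=u\in C^\infty(S^{m-1}_M;\C^n)$ (the degree-$\le m-1$ even/odd content assembles into a genuine $(m-1)$-tensor by the usual ``only top degree survives'' argument combined with the parity of $f$), and read off from the transport equation $(\bfG_E+A+\Phi)u=-f$, decomposed by Fourier degree, that the degree-$m$ part gives $f=\bfG_E p+Ap$ and the degree-$(m-1)$ part gives $h=\Phi p$. The boundary condition $u|_{\p_-SM}=0$ together with reversibility of the thermostat flow and $I_{A,\Phi}[f,h]=u|_{\p_+SM}=0$ gives $u|_{\p(SM)}=0$, hence $p|_{\p M}=0$, which is exactly the claimed conclusion. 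I would also need the regularity statement that $u^{[f,h]}$ is smooth up to the boundary, which follows from strict convexity and non-trapping as in Appendix~\ref{sec::convexity}.
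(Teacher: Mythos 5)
Your overall strategy --- regularity of $u^{[f,h]}$ up to the boundary from strict convexity and non-trapping, vertical Fourier decomposition, an energy estimate driven by $K_E<0$ to kill high modes, a downward induction, and finally reading off $p:=u$ from the transport equation --- is exactly the paper's. The place where your sketch has a genuine gap is precisely the step you flag as ``the main obstacle'': a single coercive Pestov-type inequality of the form $\|\eta_- w\|^2\ge c\,\|w\|^2$ on modes of a fixed degree cannot work here, because no smallness of $A$ or $\Phi$ relative to $\kappa$ is assumed, so the zeroth-order perturbation terms can dominate the curvature term at every fixed Fourier degree; no choice of threshold rescues this, and in particular it cannot reach degree $m-1$ for small $m$.

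The paper resolves this by splitting the argument into two independent ingredients. First, a Carleman estimate (Theorem~\ref{th::Carleman estimate}) with polynomial weights: summing the Guillemin--Kazhdan-type identity $\|\mu_+u_k\|^2=\|\mu_-u_k\|^2-\tfrac{k}{2}(K_Eu_k,u_k)$ against $k^{2s}$ produces a factor $1/(\kappa s)$ in front of the $\bfG_Eu$ terms; since $A+\Phi$ contributes only a fixed constant $R$ to $\|(\bfG_Eu)_{\pm k}\|^2$, taking $s$ large beats $R$ whatever its size --- but this yields only that $u$ has \emph{finite} degree, not degree $\le m-1$. Second, the descent from finite degree to $m-1$ uses an injectivity theorem for $\mu_\pm+A_\pm$ on $\Omega_{\pm k}(SM;\C^n)$ with vanishing boundary values (Theorem~\ref{th::injectivity of mu+A operators in boundary case}) that needs \emph{no} curvature hypothesis: on $\Omega_{\pm k}$ the thermostat terms $\lambda_\pm V$ act as multiplication by $\mp k\theta_{\pm1}$, so $\mu_\pm+A_\pm=\eta_\pm+(A-k\theta)_\pm$ with $\theta$ the $1$-form dual to $E$, and the claim reduces to the geodesic-flow result of Guillarmou--Paternain--Salo--Uhlmann for the modified connection $A-k\theta$. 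Without this two-step device (or an equivalent substitute such as holomorphic integrating factors), the downward induction as you describe it does not close. Your final algebraic step and the identification $p|_{\p M}=0$ are fine as stated.
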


The above question was extensively studied in \cite{guillarmou2016negconnections,paternain2018carleman,paternain2012attenuated,paternain2016geodesic,zhou2017generic} for geodesic flows and also for magnetic flows in \cite{ainsworth2013attenuated}. In fact, a result similar to ours was proven in \cite{paternain2018carleman} for geodesic flows in any dimension. For Gaussian thermostats on surfaces, such results were established in \cite{assylbekov2017invariant} in the absence of connections and Higgs fields. When $m=1$, these results hold on Finsler surfaces without any curvature constraints \cite{assylbekov2018general}.

\subsection{Main results on closed surfaces.}
Next, we consider similar problems discussed in Section~\ref{sec::surfaces with boundary} but on closed oriented Riemannian surfaces. Let $(M,g)$ be a closed Riemannian surface and let $E$ be an external field. Consider the canonical projection $\pi:SM\to M$. Since there is no boundary, we need to work with closed thermostat geodesics. Clearly, there should exist sufficiently many closed thermostat geodesics for the question to be sound. This is so, for example, in the case when the thermostat flow is Anosov, (see \cite[Theorem~5.2]{wojtkowski2000w}), the Gaussian thermostat $(M,g,E)$ with $K_E<0$ satisfies Anosov and transitivity properties.

Given a pair of unitary connection and skew-Hermitian Higgs fields $(A,\Phi)$, there is a naturally induced $U(n)$-cocycle over the thermostat flow $\phi_t$ on $SM$ associated with the Gaussian thermostat $(M,g,E)$. The \emph{cocyle} corresponding to $(A,\Phi)$ is the unique solution $C_{A,\Phi}:SM\times\R\to U(n)$ of
$$
\big[\p_t +A(\phi_t(x,v))+\Phi(\pi\circ\phi_t(x,v))\big]C_{A,\Phi}(x,v;t)=0,\quad C_{A,\Phi}(x,v;0)=\Id.
$$
\begin{Definition}{\rm
The pair $(A,\Phi)$ is said to be \emph{transparent} if $C_{A,\Phi}(x,v;T)=\Id$ for every time $T$ such that $\phi_T(x,v)=(x,v)$. 
}\end{Definition}

Suppose that $(A,\Phi)$ is transparent and $Q\in C^\infty(M;U(n))$. Then it is not difficult to show that $C(x,v;t):=Q(\phi_t(x,v))^{-1}C_{A,\Phi}(x,v;t)Q(x,v)$ satisfies
$$
\Big[\p_t+(Q^{-1}(d+A)Q)(\phi_t(x,v))+(Q^{-1}\Phi Q)(\pi\circ \phi_t(x,v))\Big]C(x,v;t)=0
$$
and $C(x,v;0)=\Id$. Moreover, one can see that $C(x,v;T)=\Id$ for all $T$ satisfying $\phi_T(x,v)=(x,v)$. Therefore, we can claim that
$$
C_{Q^{-1}(d+A)Q,Q^{-1}\Phi Q}(x,v;t)=Q(\phi_t(x,v))^{-1}C_{A,\Phi}(x,v;t)Q(x,v).
$$
This says that the set of transparent pairs is invariant under the gauge change:
$$
(A,\Phi)\mapsto (Q^{-1}(d+A)Q,Q^{-1}\Phi Q),\qquad Q\in C^\infty(M;U(n)).
$$
We are interested in the problem of classifying all transparent pairs up to gauge equivalence.

To state our result, we need to introduce a few more notions. Since $A$ is a unitary connection, its curvature $F_A$, defined as $F_A=dA+A\wedge A$, is a smooth $\mathfrak u(n)$-valued $2$-form on $M$. Hence, $\star F_A\in C^\infty(M;\mathfrak u(n))$, where $\star$ is the Hodge star operator on $(M,g)$. Therefore, there exist Lipschitz continuous $\lambda_{\min},\lambda_{\max}:M\to\R$ such that $\lambda_{\min}(x)$ and $\lambda_{\max}(x)$ are smallest and largest, respectively, eigenvalues of $i\star F_A(x)\in\mathfrak u(n)$ at every $x\in M$. As usual, by $\chi(M)$ we denote the Euler characteristic of $M$. Our third main result is as follows.

\begin{Theorem}\label{th::main 4}
Let $(M,g,E)$ be a Gaussian thermostat on a closed oriented surface such that $K_E<0$. Let also $A:TM\to \mathfrak{u}(n)$ be a unitary connection and $\Phi:M\to \mathfrak{gl}(n, \C)$ be a skew-Hermitian Higgs field. 
Suppose that either
\begin{equation}\label{eqn::condition 1}
2\pi k\chi(M)<\int_M \lambda_{\min}\,d\Vol_g,\quad \int_M\lambda_{\max}\,d\Vol_g<-2\pi k\chi(M)
\end{equation}
or
\begin{equation}\label{eqn::condition 2}
k>\frac{\|i\star F_A\|_{L^\infty(M)}}{\kappa}
\end{equation}
is satisfied for all $k\ge 1$, where $\kappa>0$ is a constant such that $K_E\le -\kappa$. If $C_{A,\Phi}(x,v;T)=\Id$ for all $(x,v)\in SM$ and $T\in\R$ such that $\phi_T(x,v)=(x,v)$, then the pair $(A,\Phi)$ is gauge equivalent to the trivial pair, i.e. $Q^{-1}(d+A)Q=0$ and $\Phi=0$ for some $Q\in C^\infty(M;U(n))$.
\end{Theorem}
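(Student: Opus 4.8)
The plan is to combine a Livsic-type cohomological rigidity theorem for Anosov flows with the energy (Pestov) method on $SM$. First I would pass from the periodic-orbit hypothesis to a coboundary. Since $K_E<0$, the thermostat flow $\phi_t$ is Anosov and topologically transitive by \cite{wojtkowski2000w}, and it is smooth because $\lambda$ is. As $A$ is unitary and $\Phi$ skew-Hermitian, $(x,v;t)\mapsto C_{A,\Phi}(x,v;t)$ is a smooth cocycle over $\phi_t$ taking values in the compact group $U(n)$, and by hypothesis it is trivial over every closed orbit. By the Livsic theorem for compact-group-valued cocycles over transitive Anosov flows (Parry, with smoothness of the transfer function provided by Nitica--Torok / de la Llave--Windsor), there is $u\in C^\infty(SM;U(n))$ with $C_{A,\Phi}(x,v;t)=u(\phi_t(x,v))^{-1}u(x,v)$ for all $(x,v)\in SM$ and $t\in\R$. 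Differentiating this identity at $t=0$ and using the defining ODE of $C_{A,\Phi}$ gives $\bfG_E u=u(A+\Phi)$ on all of $SM$, equivalently $(\bfG_E+A+\Phi)(u^{-1})=0$. Since $A$ and $\Phi$ act by left matrix multiplication, every column $w$ of $u^{-1}\in C^\infty(SM;GL(n,\C))$ is a smooth solution of the twisted transport equation $(\bfG_E+A+\Phi)w=0$ on the closed surface.

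The core of the argument is to show that any such $w$ satisfies $Vw=0$, i.e.\ is independent of the velocity. I would decompose $w=\sum_{k\in\Z}w_k$ into vertical Fourier modes ($Vw_k=ikw_k$), split $\bfG_E$ into its degree $+1$ and degree $-1$ pieces, and run a twisted Pestov identity adapted to the thermostat; the resulting curvature term combines the thermostat curvature $K_E$ with $i\star F_A$, whose pointwise extreme eigenvalues are $\lambda_{\min},\lambda_{\max}$. Using the Gauss--Bonnet identity $\int_M K_E\,d\Vol_g=\int_M K\,d\Vol_g=2\pi\chi(M)$ (the divergence of $E$ integrates to zero on a closed surface), the conditions \eqref{eqn::condition 1} and \eqref{eqn::condition 2} are exactly what makes the associated quadratic form on the modes of degree $\ge k$ (respectively $\le-k$) sign-definite for each $k\ge1$, which forces $w_k=0$ for all $k\ne0$; this is the $m=0$ case of the closed-surface analogue of Theorem~\ref{th::main 1}. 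Establishing this energy estimate for the thermostat flow — in particular tracking the extra terms produced by $\lambda$ that reorganize $K$ into $K_E$, handling the connection-curvature contribution $F_A$, and checking that the two curvature hypotheses (one needed for \emph{every} $k\ge1$) give precisely the coercivity required — is the main obstacle.

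Granting $Vw=0$ columnwise, $Q:=u^{-1}$ belongs to $C^\infty(M;U(n))$ (it has no $v$-dependence). Feeding $w=w(x)$ back into $(\bfG_E+A+\Phi)w=0$ and recalling that $\bfG_E w=dw(v)$ for a function of $x$ alone, the degree-$0$ part in $v$ gives $\Phi Q=0$, hence $\Phi=0$ by invertibility of $Q$, while the remaining part, linear in $v$, gives $dQ+AQ=0$ as $\mathfrak{gl}(n,\C)$-valued $1$-forms, i.e.\ $A=-(dQ)Q^{-1}$; therefore $Q^{-1}(d+A)Q=Q^{-1}dQ+Q^{-1}AQ=0$, which is the assertion.
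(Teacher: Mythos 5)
Your proposal follows essentially the same route as the paper: the Liv\v{s}ic theorem for $U(n)$-cocycles over the transitive Anosov thermostat flow plus the regularity results of Niti\c{c}\u{a}--T\"or\"ok produce a smooth trivializing function solving $(\bfG_E+A+\Phi)w=0$, Theorem~\ref{th::main 3} (applied with $[f,h]=[0,0]$, i.e.\ the degree-$1$ indexing $m=1$ rather than $m=0$ as you label it) forces $Vw=0$, and separating the resulting equation $dQ+AQ+\Phi Q=0$ by vertical degree yields $\Phi=0$ and $Q^{-1}(d+A)Q=0$. The analytic core you flag as the main obstacle is exactly the content of the paper's Sections~\ref{section::Carleman est}--\ref{sec::injectivity of mu+A operators} (a Carleman estimate giving finite degree, then injectivity of $\mu_\pm+A_\pm$ under \eqref{eqn::condition 1} or \eqref{eqn::condition 2}), and is legitimately invoked here as a separately stated theorem.
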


For geodesic flows on higher dimensional manifolds, this result was obtained in \cite{guillarmou2016negconnections, paternain2018carleman}. For earlier related results in the case of geodesic flows, see \cite{paternain2008transparent, paternain2011backlund, paternain2012transparent, paternain2013inverse}.




As in the case of surfaces with boundary, this problem is reduced to of the problem to an integral geometry problem. Our final main result is an analog of Theorem~\ref{th::main 1} for closed surfaces.

\begin{Theorem}\label{th::main 3}
Let $(M,g,E)$ be a Gaussian thermostat on a closed oriented surface such that $K_E<0$. Suppose $A:TM\to \mathfrak{u}(n)$ is a unitary connection for which either \eqref{eqn::condition 1} or \eqref{eqn::condition 2} holds for all $k\ge 1$. Let also $\Phi:M\to \mathfrak{gl}(n,\C)$ be a Higgs field. Assume that $[f,h]\in C^\infty(S^m_{M};\C^n)\times C^\infty(S^{m-1}_{M};\C^n)$, $m\ge 1$. If $u\in C^\infty(SM;\C^n)$ satisfies $(\bfG_E+A+\Phi)u=f+h$, then $u\in C^\infty(S^{m-1}_{M};\C^n)$. Hence, $f=\bfG_E u+Au$ and $h=\Phi u$.
\end{Theorem}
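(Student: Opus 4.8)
\emph{Proof plan.} The plan is to follow the vertical Fourier analysis together with an energy (Pestov-type) estimate, as done for twisted geodesic flows in \cite{guillarmou2016negconnections,paternain2018carleman} and for thermostats without attenuation in \cite{assylbekov2017invariant}, now carrying both the connection $A$ and the Higgs field $\Phi$ through the computation. First I would decompose $L^2(SM;\C^n)=\bigoplus_{k\in\Z}\Omega_k$ into vertical Fourier modes (eigenspaces of the circle action in the fibres) and write $u=\sum_k u_k$; since $u$ is smooth, this series converges rapidly. Because $\lambda(x,v)=\langle E(x),iv\rangle$ and the connection $A$ both have pure vertical Fourier degree $\pm1$, the operator $\bfG_E+A$ splits as $\bfG_E+A=\mu_++\mu_-$, where $\mu_\pm\colon\Omega_k\otimes\C^n\to\Omega_{k\pm1}\otimes\C^n$ are the $A$-twisted thermostat raising and lowering operators, while $\Phi$ preserves each $\Omega_k$. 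A function induced by a symmetric $j$-tensor has Fourier support in degrees $\{-j,-j+2,\dots,j\}$, so the equation $(\bfG_E+A+\Phi)u=f+h$ becomes the coupled system
\[
\mu_+u_{k-1}+\mu_-u_{k+1}+\Phi u_k=(f+h)_k,
\]
whose right-hand side vanishes whenever $|k|\ge m+1$. The goal is to prove $u_k=0$ for all $|k|\ge m$.

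The analytic heart is a Pestov-type energy identity for the thermostat flow twisted by $(A,\Phi)$, on the closed surface so that no boundary terms occur. Working from the structure equations of the standard frame $\{X,X_\perp,V\}$ of $SM$ and the brackets produced by $E$, $A$ and $\Phi$ — in particular $[\bfG_E,V]$ yields $X_\perp$ plus a $(V\lambda)V$ term, the commutators with $A$ contribute $i\star F_A$, and the divergence of $E$ turns the Gaussian curvature $K$ into the thermostat curvature $K_E$ — one obtains an $L^2$ identity that on the Fourier side compares $\mu_+$ and $\mu_-$ on consecutive modes, of the schematic form
\[
\|\mu_-u_k\|^2-\|\mu_+u_k\|^2=\langle\mathcal K_k u_k,u_k\rangle+(\text{terms linear in }\Phi),
\]
where $\mathcal K_k$ is a self-adjoint zeroth-order operator built from $kK_E\cdot\Id$ and $i\star F_A$. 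The role of hypotheses \eqref{eqn::condition 1} and \eqref{eqn::condition 2} is precisely to make $\mathcal K_k$ definite, uniformly in $k\ge1$: under \eqref{eqn::condition 1} this is an integrated statement, since Gauss--Bonnet gives $\int_M K_E\,d\Vol_g=2\pi\chi(M)$, so \eqref{eqn::condition 1} reads $\int_M(\lambda_{\max}+kK_E)\,d\Vol_g<0$ and $\int_M(\lambda_{\min}-kK_E)\,d\Vol_g>0$; under \eqref{eqn::condition 2} it is pointwise, since then $-kK_E\ge k\kappa>\|i\star F_A\|_{L^\infty(M)}$.

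With the identity in hand I would run a telescoping argument: substitute $\mu_+u_{k-1}=-\mu_-u_{k+1}-\Phi u_k+(f+h)_k$ into a finite sum of energy identities over the modes $|k|\ge m$, so the raising/lowering contributions cancel in pairs and the $(f+h)_k$ terms drop out, leaving $\sum_{|k|\ge m}\langle\mathcal K_k u_k,u_k\rangle$ plus cross terms from $\Phi$ and a nonnegative boundary leftover at $|k|=m$. The definiteness of $\mathcal K_k$ and its linear growth in $k$ dominate the remaining terms and force $\sum_{|k|\ge m}\|u_k\|^2\le0$, hence $u_k=0$ for $|k|\ge m$. It then remains to pin down the parity: the equation now has finite degree, $\bfG_E u+Au$ has vertical parity opposite to $u$ while $\Phi u$ has the same parity, and $f$, $h$ have parities $m$ and $m-1$; splitting the equation accordingly and discarding the wrong-parity part of $u$ yields $u\in C^\infty(S^{m-1}_{M};\C^n)$, and comparing the two parity components gives $f=\bfG_E u+Au$ and $h=\Phi u$.

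I expect the decisive difficulty to be the derivation of the twisted thermostat Pestov identity and the exact identification of the operator $\mathcal K_k$: the term $\lambda V$ makes the commutator algebra markedly heavier than in the geodesic or magnetic setting, and one must track the interplay of the external field, the curvature $F_A$, and the Higgs field so that the sign of $\mathcal K_k$ is governed by exactly the stated conditions. A secondary difficulty is absorbing the $\Phi$ cross terms, which do not telescope; controlling them against the curvature gap is what requires the hypotheses to hold for \emph{all} $k\ge1$ rather than only for $k$ near $m$.
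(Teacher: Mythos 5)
Your overall frame (vertical Fourier modes, twisted raising/lowering operators $\mu_\pm+A_\pm$, Guillemin--Kazhdan-type energy identities, curvature hypotheses providing definiteness) is the right one, but the core of your plan --- a single telescoping sum of energy identities over all modes $|k|\ge m$ in which the $\Phi$ cross terms are ``controlled against the curvature gap'' --- is precisely the step that does not work, and it is the reason the paper does not argue this way. When you substitute $(\mu_++A_+)u_{k-1}=-(\mu_-+A_-)u_{k+1}-\Phi u_k$ and try to telescope, each comparison of consecutive modes costs a Cauchy--Schwarz splitting $(1+\varepsilon_k)$, and with $\varepsilon_k$ bounded below the accumulated factors grow geometrically, while the curvature gain is only linear in $k$; with $\varepsilon_k\to0$ you cannot absorb $\|\Phi\|_{L^\infty}^2\|u_k\|^2$ at the low modes near $m$, where $k$ is not large. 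This is exactly the known obstruction to treating a general Higgs field (and a general $\mathfrak{gl}(n,\C)$ connection) by the classical Guillemin--Kazhdan argument. The paper's fix is the Carleman estimate of Theorem~\ref{th::Carleman estimate}: one multiplies the $k$-th energy identity by the polynomial weight $k^{2s}$, chooses $\varepsilon_{k-2}=k^{2s}/(k-2)^{2s}-1$ so that the lowering terms telescope \emph{exactly}, and gains the factor $1/(\kappa s)$ on the right-hand side. Taking $s$ large then absorbs $A$ and $\Phi$ outright (Theorem~\ref{th::finite degree}) --- but this only yields that $u$ has \emph{some} finite degree $l$, possibly much larger than $m$. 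Note also that \eqref{eqn::condition 1}--\eqref{eqn::condition 2} play no role in this step, contrary to your suggestion that they are what controls the $\Phi$ cross terms.

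The second missing idea is the descent from degree $l$ to degree $m-1$. At the top surviving mode the equation degenerates to $(\mu_++A_+)u_{l-1}=0$ and $(\mu_-+A_-)u_{-l+1}=0$ (the $\Phi$ and lowering contributions vanish because $u_{\pm l}=u_{\pm(l+1)}=0$), so one only needs injectivity of $\mu_\pm+A_\pm$ on $\Omega_{\pm k}$ for each $k\ge1$, with no Higgs field present. This is where \eqref{eqn::condition 1} and \eqref{eqn::condition 2} enter: under \eqref{eqn::condition 2} the unweighted identity of Proposition~\ref{prop::weighted GK energy identity for mu+A operators} gives pointwise definiteness of $-kK_E+i\star F_A$ (Corollary~\ref{cor::injectivity of mu+A operators for small A}), but under the integrated condition \eqref{eqn::condition 1} the operator $\mathcal K_k$ you describe need not be definite at all; one must first solve $-\Delta_g\varphi=kK_E+f$ for a conformal weight $\varphi$ (using Gauss--Bonnet, as you correctly note, to make the right-hand side have zero mean) and apply the \emph{weighted} identity to $e^{\pm\varphi}u$ (Theorem~\ref{th::injectivity of mu+A operators in closed case}). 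Without this two-step structure --- Carleman with large $s$ for finite degree, then mode-by-mode descent via injectivity of the twisted $\eta_\pm$ operators --- your argument has a genuine gap at the modes $|k|$ close to $m$.
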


This result was obtained in \cite{guillarmou2016negconnections, paternain2018carleman} for geodesic flows on higher dimensional manifolds. In the absence of connections and Higgs fields, various versions of this result were established in \cite{assylbekov2018general, assylbekov2017invariant, dairbekov2007entropy,dairbekov2007entropy2, jane24injectivity} for Gaussian thermostats.

\subsection{Structure of the paper} The paper is organized as follows. In Section~\ref{section::prelim} we collect certain preliminary facts about geometry and Fourier analysis on $SM$ and Gaussian thermostats. Then we derive a Carleman type estimate for Gaussian thermostats with negative curvature in Section~\ref{section::Carleman est}. Next, in Section~\ref{sec::regularity results} we prove certain regularity results for solutions to the transport equation on non-trapping and strictly convex Gaussian thermostats. Section~\ref{sec::linear problems} contains the proofs of Theorem~\ref{th::main 1} and Theorem~\ref{th::main 3}. In Section~\ref{sec::injectivity of mu+A operators} we prove certain technical results which we used in Section~\ref{sec::linear problems}. Finally, the proofs of Theorem~\ref{th::main 2} and Theorem~\ref{th::main 4} are presented in Section~\ref{sec::nonlinear problems}.

\subsection*{Acknowledgements} YA would like to thank Total E \& P Research \& Technology USA and the members of the Geo-Mathematical Imaging Group at Rice University for financial support.	


\section{Preliminaries}\label{section::prelim}

\subsection{Geometry on $SM$} Since $M$ is assumed to be oriented there is a circle action on the fibres of $SM$ with infinitesimal generator $V$ called the \emph{vertical vector field}. Let X denote the generator of the geodesic flow of $g$. We complete $X,V$ to a global frame of $T(SM)$ by defining the vector field $X_\perp:= [V,X]$, where $[\cdot,\cdot]$ is the Lie bracket for vector fields. Using this frame we can define a Riemannian metric on $SM$ by declaring $\{X, X_{\perp}, V\}$ to be an orthonormal basis and the volume form of this metric will be denoted by $d\Sigma^3$. The vector fields $X,X_\perp,V$ satisfy the following structural equations
\begin{equation}\label{eqn::structural commutation equations}
X=[V, X_{\perp}],\quad X_{\perp}=[X,V],\quad [X, X_{\perp}]=-KV,
\end{equation}
where $K$ is the Gaussian curvature of the surface.

\subsection{Fourier analysis on $SM$}
For any two functions $u,v:SM\to\C^n$ define the $L^2$ inner product as
$$
(u,v):=\int_{SM} \<u,v\>_{\C^n}\,d\Sigma^3
$$
with the corresponding norm denoted by $\|\cdot\|$. The space $L^2(SM;\C^n)$ decomposes orthogonally as a direct sum $L^2(SM;\C^n)=\bigoplus_{k\in \mathbb Z}H_k(SM;\C^n)$ where $H_k(SM;\C^n)$ is the eigenspace of $V$ corresponding to the eigenvalue $ik$. A function $u\in L^2(SM;\C^n)$ has a Fourier series expansion
$$
u=\sum^{\infty}_{k=-\infty} u_k,\qquad u_k\in H_k(SM;\C^n).
$$
Then $\|u\|^2=\sum_{k=-\infty}^\infty\|u_k\|^2$. We denote the subspace
$$
\Omega_k(SM;\C^n):=H_k(SM;\C^n)\cap C^{\infty}(SM;\C^n).
$$
We say that $u\in C^\infty(SM;\C^n)$ is of \emph{degree} $m$ if $u_k=0$ for all $|k|\ge m+1$. If $f\in C^\infty(S^m_M;\C^n)$, then the corresponding $f\in C^\infty(SM;\C^n)$ is of degree $m$; see \cite[Section~2]{paternain2013tensor} for more details. Moreover, if $m$ is odd/even then $f_k=0$ for all even/odd $k\in\Z$. In particular, for any $\alpha\in C^\infty(\Lambda^1_M;\C^n)$ its induced function $\alpha$ on $SM$ can be written as $\alpha=\alpha_{-1}+\alpha_1$ with $\alpha_{\pm 1}\in \Omega_{\pm 1}(SM;\C^n)$.

\subsection{Generating vector field of a Gaussian thermostat} Let $(M,g,E)$ be a Gaussian thermostat with $\dim(M)=2$. Then according to \cite{dairbekov2007entropy}, the generating vector field $\bfG_E$ of the thermostat flow $\phi_t$ can be written as
$$
\bfG_E=X+\lambda V.
$$
For the adjoints of $X_\perp$, $V$ and $\bfG_E$, w.r.t. $(\cdot,\cdot)$, we have
$$
X_\perp^*=-X_\perp,\quad V^*=-V,\quad \bfG_E^*=-(\bfG_E+V(\lambda));
$$
see \cite{assylbekov2017invariant,dairbekov2007entropy}.
 
\subsection{Guillemin-Kazhdan type operators} Consider the following first order differential operators introduced by Guillemin and Kazhdan~\cite{guillemin1980some}
\begin{equation}\label{def::eta operators}
\eta_-=\frac{1}{2}(X-iX_\perp),\qquad \eta_+=\frac{1}{2}(X+iX_\perp).
\end{equation}
Then $X$ can be expressed as $X=\eta_-+\eta_+$. It was shown that $\eta_\pm:\Omega_k(SM;\C^n)\to\Omega_{k\pm1}(SM;\C^n)$, $k\in\Z$. Moreover, these operators are elliptic and satisfy the following commutation relation
\begin{equation}\label{eq::[eta_+,eta_-]}
[\eta_+,\eta_-]=\frac{i}{2}KV.
\end{equation}
It is also important to note that $\eta_-^*=-\eta_+$ and $\eta_+^*=-\eta_-$; see \cite{guillemin1980some}.

For the Gaussian thermostat $(M,g,E)$, we consider the modified versions of $\eta_+$ and $\eta_-$ defined as
$$
\mu_-:=\eta_-+\lambda_{-}V,\quad \mu_+:=\eta_++\lambda_+ V,
$$
where $\lambda=\lambda_{-}+\lambda_{+}$ with $\lambda_\pm\in\Omega_{\pm 1}(SM;\C)$. Then it is not difficult to see that $\bfG_E=\mu_-+\mu_+$.

Using $V^*=-V$ and $\overline{\lambda}_-=\lambda_+$, one can see that
\begin{equation}\label{eqn::adjoints of mu operators}
(\mu_-)^*=-\mu_+-i\lambda_+,\qquad (\mu_+)^*=-\mu_-+i\lambda_-.
\end{equation}
We also have the following analog of \eqref{eq::[eta_+,eta_-]}.

\begin{Lemma}\label{lem::commutation between mu operators}
If $u\in C^\infty(SM;\C^n)$, then
$$
[\mu_+,\mu_-]u=\frac{i}{2}K_EVu-i\lambda_-\mu_+u-i\lambda_+\mu_- u.
$$
\end{Lemma}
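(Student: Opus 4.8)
The plan is to verify the identity by direct computation, expanding $[\mu_+,\mu_-]u = [\eta_+ + \lambda_+ V,\ \eta_- + \lambda_- V]u$ using bilinearity of the commutator into four pieces:
$$
[\mu_+,\mu_-]u = [\eta_+,\eta_-]u + [\eta_+,\lambda_- V]u + [\lambda_+ V,\eta_-]u + [\lambda_+ V,\lambda_- V]u.
$$
The first term is handled by \eqref{eq::[eta_+,eta_-]}, giving $\tfrac{i}{2}KVu$. For the mixed terms I would use the Leibniz rule for first-order operators: $[\eta_+,\lambda_- V]u = \eta_+(\lambda_-)\,Vu + \lambda_-[\eta_+,V]u$ and similarly $[\lambda_+ V,\eta_-]u = -\eta_-(\lambda_+)\,Vu - \lambda_+[\eta_-,V]u$, where I note that $\lambda_\pm$ are functions on $SM$ so $(\lambda_- V)u = \lambda_- \cdot Vu$ with scalar multiplication. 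The bracket $[\eta_\pm,V]$ is computed from the structural equations \eqref{eqn::structural commutation equations}: since $\eta_\pm = \tfrac12(X \pm iX_\perp)$ and $[V,X] = -X_\perp$ while $[V,X_\perp] = X$, one gets $[V,\eta_\pm] = \tfrac12(-X_\perp \pm iX) = \pm i\,\eta_\pm$, hence $[\eta_\pm,V] = \mp i\,\eta_\pm$. The last term $[\lambda_+ V,\lambda_- V]u = \lambda_+ V(\lambda_-)\,Vu - \lambda_- V(\lambda_+)\,Vu$, and since $\lambda_\pm \in \Omega_{\pm1}$ we have $V(\lambda_\pm) = \pm i\lambda_\pm$, so this equals $(i\lambda_+\lambda_- - (-i)\lambda_-\lambda_+)Vu = 2i\lambda_+\lambda_-\,Vu$ — wait, this must be reorganized to match, so care is needed here.

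Next I would collect the $Vu$ coefficients. From the above: the first term contributes $\tfrac{i}{2}K$; the term $\eta_+(\lambda_-)Vu - \eta_-(\lambda_+)Vu$ should combine, using $X = \eta_- + \eta_+$ and the fact that $\lambda = \lambda_- + \lambda_+$ with the right Fourier degrees, into something proportional to $X(\lambda)$ or $\div_g E$. Indeed the key geometric input is that $K_E = K - \div_g E$, and one knows from the Gaussian thermostat literature (e.g. \cite{dairbekov2007entropy}) that $\div_g E$ relates to $V(\lambda)$ and the $X$-derivatives of $\lambda_\pm$; I would need the identity expressing $\div_g E$ in terms of $\eta_\pm(\lambda_\mp)$ — plausibly $\div_g E = -i(\eta_+\lambda_- - \eta_-\lambda_+)$ or a close variant (also possibly involving $X_\perp\lambda$ at the zeroth Fourier level). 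Substituting this turns the aggregate $Vu$ coefficient from $\tfrac{i}{2}K$ plus the mixed contributions into $\tfrac{i}{2}K_E$. The remaining terms, namely $\lambda_-[\eta_+,V]u = -i\lambda_-\eta_+ u$ and $-\lambda_+[\eta_-,V]u = -i\lambda_+\eta_- u$, plus the leftover pieces of the $\lambda_\pm V$ cross terms, must reorganize into $-i\lambda_-\mu_+ u - i\lambda_+\mu_- u = -i\lambda_-(\eta_+ + \lambda_+ V)u - i\lambda_+(\eta_- + \lambda_- V)u = -i\lambda_-\eta_+ u - i\lambda_+\eta_- u - 2i\lambda_-\lambda_+ Vu$; so I expect the $2i\lambda_+\lambda_- Vu$ from the last bracket to be exactly what is needed to upgrade the bare $\eta_\pm$ terms into the full $\mu_\pm$ terms, with a sign that works out.

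The main obstacle, and the step requiring the most care, is getting all the signs and the precise form of $\div_g E$ in terms of the Guillemin–Kazhdan operators right, since the identity must land exactly on $\tfrac{i}{2}K_E Vu$ with no stray first-order terms. In particular one must be careful that $\lambda_\pm$ as functions on $SM$ do not commute trivially with the differential operators, and that the decomposition $\lambda = \lambda_- + \lambda_+$ interacts correctly with $\eta_\pm$ (which shift Fourier degree by $\pm1$). I would double-check the computation by testing it on the degree-$0$ part: apply both sides to a function $u \in \Omega_0$ and verify the Fourier components match degree by degree, which isolates the curvature identity cleanly. Once the bookkeeping is done the result follows immediately; there is no deeper structural difficulty beyond careful expansion and the known relation between $E$, $\lambda$, and $\div_g E$.
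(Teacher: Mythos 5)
Your decomposition of $[\mu_+,\mu_-]$ into the four brackets, the use of \eqref{eq::[eta_+,eta_-]} for the first one, and the Leibniz-rule bookkeeping for the mixed and quadratic terms is exactly the route the paper takes (the paper works on a fixed Fourier mode $u\in\Omega_k(SM;\C^n)$ and substitutes $Vu=iku$, which is equivalent to your operator identities $[\eta_\pm,V]=\mp i\eta_\pm$ and $V\lambda_\pm=\pm i\lambda_\pm$). One small point you flagged yourself does need correcting:
$$
[\lambda_+V,\lambda_-V]u=\big(\lambda_+(V\lambda_-)-\lambda_-(V\lambda_+)\big)Vu=-2i\lambda_+\lambda_-Vu,
$$
not $+2i\lambda_+\lambda_-Vu$; with the minus sign this is precisely the term needed to promote $-i\lambda_-\eta_+u-i\lambda_+\eta_-u$ to $-i\lambda_-\mu_+u-i\lambda_+\mu_-u$, so the sign does ``work out'' but only after that slip is fixed.

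The genuine gap is the identity that turns the leftover zeroth-order coefficient into the thermostat curvature. After collecting terms the coefficient of $Vu$ is $\tfrac{i}{2}K+(\eta_+\lambda_--\eta_-\lambda_+)$, and what is required is
$$
\eta_+\lambda_--\eta_-\lambda_+=-\frac{i}{2}\div_g E,
$$
whereas your guessed variant $\div_gE=-i(\eta_+\lambda_--\eta_-\lambda_+)$ is off by a factor of $-2$ and would not produce $\tfrac{i}{2}K_EVu$. Since this is exactly the step where $K$ becomes $K_E$, the lemma is not established until the constant is pinned down. The paper closes it as follows: let $\theta$ be the $1$-form dual to $E$; then $\lambda=V\theta$, hence $\lambda_-=-i\theta_{-1}$ and $\lambda_+=i\theta_1$, and a direct computation with $\eta_\pm=\tfrac12(X\pm iX_\perp)$ gives $\eta_+\lambda_--\eta_-\lambda_+=-\tfrac{i}{2}(X\theta-X_\perp V\theta)$; finally $X\theta-X_\perp V\theta=\div_gE$ by the computation in the proof of Lemma~5.2 of \cite{paternain2006regularity}. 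Supplying this calculation (or an equivalent local-coordinate verification) is the one substantive item missing from your argument.
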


\begin{proof}
It is enough to give the proof in the case $u\in \Omega_k(SM;\C^n)$. Using \eqref{eq::[eta_+,eta_-]} and the fact that $Vu=iku$,
\begin{align*}
[\mu_+,\mu_-]u&=[\eta_+,\eta_-]u+[\eta_+,\lambda_- V]u+[\lambda_+V,\eta_-]u+[\lambda_+V,\lambda_-V]u\\
&=\frac{i}{2}KVu+\eta_+(ik\lambda_- u)-i(k+1)\lambda_-\eta_+ u+i(k-1)\lambda_+\eta_- u\\
&\qquad-\eta_-(ik\lambda_+ u)-k(k-1)\lambda_+\lambda_- u+k(k-1)\lambda_+\lambda_- u\\
&=\frac{i}{2}KVu+ik(\eta_+\lambda_--\eta_-\lambda_+)u-i\lambda_-\eta_+u-i\lambda_+\eta_- u+2k\lambda_+\lambda_- u\\
&=\frac{i}{2} KVu+(\eta_+\lambda_--\eta_-\lambda_+)Vu-i\lambda_-\mu_+u-i\lambda_+\mu_- u.
\end{align*}
Now, let $\theta$ be a $1$-form on $M$ dual to $E$. Then $\lambda=V\theta$, and hence $\lambda_-=-i\theta_{-1}$ and $\lambda_+=i\theta_1$. Using these, a straightforward calculation shows that
$$
\eta_+\lambda_--\eta_-\lambda_+=-\frac{i}{2}(X\theta-X_\perp V\theta).
$$
However, $X\theta-X_\perp V\theta=\div_g E$; see the proof of \cite[Lemma~5.2]{paternain2006regularity}. Thus, we have
$$
(\eta_+\lambda_--\eta_-\lambda_+)=-\frac{i}{2}\div_g E,
$$
which completes the proof.
\end{proof}

\begin{Lemma}\label{lem::commutation formula for mu+A operators}
If $A:TM\to \mathfrak{gl}(n,\C)$ is a connection, then
$$
[\mu_{+}+A_{+},\mu_{-}+A_{-}]u=\frac{i}{2}K_EVu+\frac{i}{2}\star F_A u-i\lambda_-(\mu_{+}+A_{+})u-i\lambda_+(\mu_{-}+A_{-})u
$$
for $u\in \Omega_k(SM;\C^n)$.
\end{Lemma}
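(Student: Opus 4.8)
The plan is to reduce Lemma~\ref{lem::commutation formula for mu+A operators} to the already-proven Lemma~\ref{lem::commutation between mu operators} by expanding the commutator of $\mu_\pm + A_\pm$ bilinearly. Writing $A = A_{-1} + A_1$ with $A_{\pm 1}\in\Omega_{\pm 1}(SM;\mathfrak{gl}(n,\C))$ (the induced function of the $\mathfrak{gl}(n,\C)$-valued $1$-form $A$ on $SM$), we set $A_\pm := A_{\pm 1}$, so that $A$ acts on $u$ by $Au = A_- u + A_+ u$. Then
\begin{align*}
[\mu_+ + A_+, \mu_- + A_-]u &= [\mu_+,\mu_-]u + [\mu_+, A_-]u + [A_+, \mu_-]u + [A_+, A_-]u.
\end{align*}
The first term is handled by Lemma~\ref{lem::commutation between mu operators}. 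So the whole task is to show that the remaining three terms collect into $\tfrac{i}{2}\star F_A\, u$ plus the correction terms $-i\lambda_- A_+ u - i\lambda_+ A_- u$ that complete the $\mu_\pm$ into $\mu_\pm + A_\pm$ in the statement.

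First I would compute $[\mu_+, A_-]u$ on $u\in\Omega_k$. Since $\mu_+ = \eta_+ + \lambda_+ V$ and $A_-$ is multiplication by a matrix-valued function in $\Omega_{-1}$, we get $[\mu_+, A_-]u = (\eta_+ A_-)u + [\lambda_+ V, A_-]u = (\eta_+ A_-)u + \lambda_+(V A_-)u = (\eta_+ A_-)u - i\lambda_+ A_- u$, using that $A_-\in\Omega_{-1}$ so $VA_- = -iA_-$ as a multiplication operator (more precisely $V(A_- u) - A_- V u = (VA_-)u$ with $VA_- = -iA_-$). Similarly $[A_+, \mu_-]u = -[\mu_-, A_+]u = -(\eta_- A_+)u - [\lambda_- V, A_+]u = -(\eta_- A_+)u - i\lambda_- A_+ u$, since $A_+\in\Omega_1$ gives $VA_+ = iA_+$. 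Thus the two middle terms contribute
$$
(\eta_+ A_- - \eta_- A_+)u - i\lambda_+ A_- u - i\lambda_- A_+ u,
$$
and the last term contributes $[A_+, A_-]u$, the pointwise matrix commutator. It remains to identify $(\eta_+ A_- - \eta_- A_+) + [A_+, A_-]$ with $\tfrac{i}{2}\star F_A$.

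The crux of the proof is this last identification. This is the connection-only analog of the curvature computation already done for $\lambda$ inside the proof of Lemma~\ref{lem::commutation between mu operators}: there it is shown that $\eta_+\lambda_- - \eta_-\lambda_+ = -\tfrac{i}{2}\div_g E$, coming from $X\theta - X_\perp V\theta = \div_g E$. Here, with $A$ a matrix-valued $1$-form, the same Fourier-mode bookkeeping on $SM$ (decompose $A = A_{-1} + A_1$, apply $\eta_\pm$ which shift degree by $\pm 1$) together with the structural equations \eqref{eqn::structural commutation equations} yields $\eta_+ A_{-1} - \eta_- A_1 = -\tfrac{i}{2}(XA - X_\perp V A)$ projected suitably, and the pointwise term $[A_1, A_{-1}]$ accounts for the quadratic piece $A\wedge A$ in $F_A = dA + A\wedge A$; combining, $(\eta_+ A_- - \eta_- A_+) + [A_+, A_-] = \tfrac{i}{2}\star F_A$ as an element of $C^\infty(SM;\mathfrak{gl}(n,\C))$ (constant on fibres, since $\star F_A$ is pulled back from $M$). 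I expect this curvature identity to be the main obstacle, and I would prove it by the same route as in Lemma~\ref{lem::commutation between mu operators}: express everything via $X, X_\perp, V$ using \eqref{def::eta operators}, use that $dA = (XA_\perp - X_\perp A_X)\,dx\wedge\ldots$ in the moving frame dual to $\{X,X_\perp\}$ pulled to $SM$, and match the result against the Hodge star of $F_A$; alternatively one can cite the corresponding formula from the geodesic-flow literature \cite{paternain2013tensor,paternain2012attenuated} since the term $\eta_+ A_- - \eta_- A_+ + [A_+,A_-]$ does not involve $\lambda$ and is therefore identical to the geodesic case. Once this is in hand, substituting back and adding the $\tfrac{i}{2}K_E Vu - i\lambda_-\mu_+ u - i\lambda_+\mu_- u$ from Lemma~\ref{lem::commutation between mu operators} and absorbing $-i\lambda_\mp A_\pm u$ into the connection terms gives exactly the claimed formula.
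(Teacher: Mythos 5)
Your proof follows the paper's argument essentially verbatim: expand the commutator bilinearly, apply Lemma~\ref{lem::commutation between mu operators} to $[\mu_+,\mu_-]u$, compute the cross terms to get $(\eta_+A_- - \eta_-A_+)u - i\lambda_+A_-u - i\lambda_-A_+u$ so that the $\lambda$-corrections complete $\mu_\pm$ to $\mu_\pm+A_\pm$, and identify $\eta_+A_- - \eta_-A_+ + [A_+,A_-]$ with $\tfrac{i}{2}\star F_A$ by reducing to the geodesic-flow case (the paper writes $\eta_+A_- - \eta_-A_+ = \tfrac{i}{2}(X_\perp A + XVA)$ and $[A_+,A_-]=-\tfrac{i}{2}[VA,A]$ and then cites formula (6) of \cite{paternain2012attenuated}). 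The only blemish is your sketched intermediate formula $\eta_+A_{-1}-\eta_-A_1=-\tfrac{i}{2}(XA-X_\perp VA)$, which transplants the $\lambda$-computation too literally (recall $\lambda_\pm=\pm i\,\theta_{\pm1}$ differ from $\theta_{\pm1}$ by factors of $\pm i$, so the correct expression here is $\tfrac{i}{2}(X_\perp A+XVA)$); since you ultimately defer to the cited curvature identity, this slip does not affect the validity of the argument.
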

\begin{proof}
Using Lemma~\ref{lem::commutation between mu operators},
\begin{align*}
&[\mu_{+}+A_{+},\mu_{-}+A_{-}]u=[\mu_{+},\mu_{-}]u+[A_{+},\mu_{-}]u+[\mu_{+},A_{-}]u+[A_{+},A_{-}]u\\
&=\frac{i}{2}K_EVu-i\lambda_{-}(\mu_{+}+A_{+})u-i\lambda_{+}(\mu_{-}+A_{-})u+(\eta_{+}A_{-}-\eta_{-}A_{+})u+[A_{+},A_{-}]u.
\end{align*}
A straightforward calculation shows that
$$
\eta_{+}A_{-}-\eta_{-}A_{+}=\frac{i}{2}(X_\perp A+XVA),\quad [A_{+},A_{-}]=-\frac{i}{2}[VA,A].
$$
Hence, by (6) in \cite{paternain2012attenuated}, we have
$$
\eta_{+}A_{-}-\eta_{-}A_{+}+[A_{+},A_{-}]=\frac{i}{2}\star F_A,
$$
Using this, we get the desired identity.
\end{proof}


\section{Carleman estimate for Gaussian thermostats with negative curvature}\label{section::Carleman est}
In the present section, we prove a Carleman estimate for Gaussian thermostats with negative curvature, which will be used in Section~\ref{sec::linear problems} to prove our main results.
\begin{Theorem}\label{th::Carleman estimate}
Let $(M,g,E)$ be a Gaussian thermostat on a compact oriented surface and $m\ge 1$ be an integer. Assume that $K_E\le-\kappa$ for some $\kappa>0$ constant. Then for any $s>0$, we have
$$
\sum_{k=m}^\infty k^{2s+1} \Big(\|u_k\|^2+\|u_{-k}\|^2\Big)\le \frac{1}{\kappa s}\sum_{k=m+1}^\infty k^{2s+1}\Big(\|(\bfG_E u)_k\|^2+\|(\bfG_E u)_{-k}\|^2\Big),
$$
for all $u\in C^\infty(SM;\C^n)$, with $u|_{\p(SM)}=0$ in the case $\p M\neq \varnothing$.
\end{Theorem}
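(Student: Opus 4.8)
The plan is to deduce the estimate from a single ``energy identity'' for the modified Guillemin--Kazhdan operators $\mu_\pm$, followed by a weighted summation over Fourier degrees.

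First I would establish, for $w\in\Omega_k(SM;\C^n)$ with $w|_{\p(SM)}=0$, the identity
$$
\|\mu_-w\|^2-\|\mu_+w\|^2=\frac k2\int_{SM}K_E\,|w|^2\,d\Sigma^3 .
$$
This follows by writing $\|\mu_\pm w\|^2=(w,(\mu_\pm)^*\mu_\pm w)$ (the boundary terms in the integrations by parts vanish since $w$ does), expanding the adjoints via \eqref{eqn::adjoints of mu operators}, and invoking Lemma~\ref{lem::commutation between mu operators} with $Vw=ikw$; the terms carrying $\lambda_\pm$ then cancel in pairs, leaving only the curvature part of $-(w,[\mu_+,\mu_-]w)$. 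Two remarks make it usable: since the fibre rotations preserve $\p(SM)=S(\p M)$, each Fourier component $u_k$ of a function $u\in C^\infty(SM;\C^n)$ that vanishes on $\p(SM)$ again vanishes there, so the identity applies to $w=u_k$; and since $K_E\le-\kappa$, for $k\ge1$ we obtain $\|\mu_+w\|^2\ge\tfrac{k\kappa}{2}\|w\|^2$, while $\|\mu_-w\|^2=\|\mu_+w\|^2+\tfrac k2\int_{SM}K_E|w|^2$ is an exact substitution I will use below.

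Next I would reduce to a one-sided statement. Since $\bfG_E=X+\lambda V$ has real coefficients, $\overline{\bfG_E u}=\bfG_E\bar u$, and complex conjugation is an isometry of $L^2(SM;\C^n)$ sending $\Omega_k$ onto $\Omega_{-k}$ with $(\bar u)_k=\overline{u_{-k}}$; hence the inequality for the $u_{-k}$'s is the inequality for the $u_k$'s applied to $\bar u$, and it suffices to prove $\sum_{k\ge m}k^{2s+1}\|u_k\|^2\le\tfrac1{\kappa s}\sum_{j\ge m+1}j^{2s+1}\|(\bfG_E u)_j\|^2$. Writing $f:=\bfG_E u$ and using $\mu_\pm:\Omega_k\to\Omega_{k\pm1}$, $\bfG_E=\mu_-+\mu_+$, one has $f_j=\mu_+u_{j-1}+\mu_-u_{j+1}$. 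For $j\ge m+1$ and any $\delta_j\in(0,1)$, I would apply the elementary bound $\|a+b\|^2\ge(1-\delta_j)\|a\|^2+(1-\delta_j^{-1})\|b\|^2$ with $a=\mu_+u_{j-1}$, $b=\mu_-u_{j+1}$, and then substitute the exact formula for $\|\mu_-u_{j+1}\|^2$ — the resulting $\int K_E|u_{j+1}|^2$ term has the right sign because $1-\delta_j^{-1}<0$ — to get
$$
\|f_j\|^2\ \ge\ (1-\delta_j)\|\mu_+u_{j-1}\|^2+(1-\delta_j^{-1})\|\mu_+u_{j+1}\|^2+(\delta_j^{-1}-1)\tfrac{(j+1)\kappa}{2}\|u_{j+1}\|^2 .
$$
Multiplying by $c_j:=j^{2s+1}$, summing over $j\ge m+1$, and reindexing, the two $\|\mu_+u_i\|^2$-sums merge into $\sum_{i\ge m}\Gamma_i\|\mu_+u_i\|^2$ with $\Gamma_i=c_{i+1}(1-\delta_{i+1})+c_{i-1}(1-\delta_{i-1}^{-1})$ (the last term only for $i\ge m+2$). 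Assuming $\Gamma_i\ge0$, bounding $\|\mu_+u_i\|^2\ge\tfrac{i\kappa}{2}\|u_i\|^2$ (legitimate as $i\ge m\ge1$), and noting that the $c_{i-1}$-contributions cancel against the explicit $\|u_i\|^2$-budget, one is left with $\sum_{j\ge m+1}c_j\|f_j\|^2\ge\tfrac\kappa2\sum_{i\ge m}c_{i+1}(1-\delta_{i+1})\,i\,\|u_i\|^2$. The choice $\delta_{i+1}:=1-\dfrac{2s\,i^{2s}}{(i+1)^{2s+1}}$ makes $c_{i+1}(1-\delta_{i+1})=2s\,i^{2s}$, so the right-hand side equals $\kappa s\sum_{i\ge m}i^{2s+1}\|u_i\|^2$, which is the desired one-sided bound; adding its conjugate counterpart gives the theorem. (All series converge absolutely and the rearrangements are justified because $u$ is smooth, so $\|u_k\|$ and $\|\mu_\pm u_k\|$ decay faster than any power of $|k|$.)

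The hard part will be the bookkeeping in the last step: one must check that the prescribed $\delta_j$ genuinely lie in $(0,1)$ and that every combined coefficient is nonnegative, i.e.\ $\Gamma_i\ge0$ for all $i\ge m$. These reduce to the elementary inequalities $2s(j-1)^{2s}<j^{2s+1}$ for integers $j\ge m+1\ge2$ and $\delta_{i-1}\ge\big(\tfrac{i-2}{i}\big)^{2s}$ for integers $i\ge m+2\ge3$, uniformly in $s>0$; they are tightest at the bottom of the range and require a short but careful calculus argument. Everything else — the energy identity, the conjugation symmetry, and the $\ell^2$ manipulations — is routine given Lemma~\ref{lem::commutation between mu operators} and \eqref{eqn::adjoints of mu operators}.
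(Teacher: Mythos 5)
Your argument is correct and is essentially the paper's own proof: your energy identity is Proposition~\ref{prop::GK energy identity for mu operators}, and your weighted Peter--Paul summation with $c_{i+1}(1-\delta_{i+1})=2s\,i^{2s}$ is just a reparametrization of the paper's choice $\varepsilon_{k-2}=k^{2s}/(k-2)^{2s}-1$. The two elementary inequalities you flag at the end do hold uniformly in $s>0$: the first follows from $te^{-t}\le e^{-1}$ with $t=2s/j$, and after clearing denominators the condition $\Gamma_i\ge 0$ becomes exactly $2s(k^2-1)^{2s}\le k^{2s+1}\bigl((k+1)^{2s}-(k-1)^{2s}\bigr)$ with $k=i-1$, which is \cite[Lemma~6.8]{paternain2018carleman}, the same lemma the paper invokes at the corresponding step.
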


To prove Theorem~\ref{th::Carleman estimate}, we follow the arguments in~\cite[Section~6]{paternain2018carleman}. Our starting point will be the following analog of Guillemin-Kazhdan energy identity~\cite{guillemin1980some}.

\begin{Proposition}\label{prop::GK energy identity for mu operators}
Let $(M,g,E)$ be a Gaussian thermostat on a compact oriented surface. Then for any $u\in \Omega_k(SM;\C^n)$, $k\in\Z$, with $u|_{\p(SM)}=0$ in the case $\p M\neq \varnothing$, we have
$$
\|\mu_+ u\|^2=\|\mu_- u\|^2-\frac{k}{2}(K_E u,u).
$$
\end{Proposition}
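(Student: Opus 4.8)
The plan is to compute the two quantities $\|\mu_+u\|^2$ and $\|\mu_-u\|^2$ by pairing the operators with their adjoints, so that the difference is exactly the commutator $[\mu_+,\mu_-]$ evaluated against $u$, plus boundary terms that vanish by the hypothesis $u|_{\p(SM)}=0$. Concretely, using the adjoint formulas \eqref{eqn::adjoints of mu operators}, I would write $\|\mu_+u\|^2=(\mu_+u,\mu_+u)=((\mu_+)^*\mu_+u,u)=(-\mu_-\mu_+u+i\lambda_-\mu_+u,u)$ and similarly $\|\mu_-u\|^2=((\mu_-)^*\mu_-u,u)=(-\mu_+\mu_-u+i\lambda_+\mu_-u,u)$. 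Subtracting, the terms $(\mu_-\mu_+u,u)-(\mu_+\mu_-u,u)$ combine into $-([\mu_+,\mu_-]u,u)$, so we obtain
$$
\|\mu_+u\|^2-\|\mu_-u\|^2=-([\mu_+,\mu_-]u,u)+i(\lambda_-\mu_+u,u)-i(\lambda_+\mu_-u,u).
$$

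Next I would substitute the commutator identity from Lemma~\ref{lem::commutation between mu operators}. On $\Omega_k(SM;\C^n)$ we have $Vu=iku$, so $[\mu_+,\mu_-]u=-\tfrac{k}{2}K_Eu-i\lambda_-\mu_+u-i\lambda_+\mu_-u$. Plugging this in, the terms $-i(\lambda_-\mu_+u,u)$ and $-i(\lambda_+\mu_-u,u)$ coming from the commutator cancel exactly against the remaining terms $i(\lambda_-\mu_+u,u)-i(\lambda_+\mu_-u,u)$ — here one must be careful about which one carries the extra sign, but the point is that the $\lambda_\pm$-contributions are designed to disappear — leaving
$$
\|\mu_+u\|^2-\|\mu_-u\|^2=-\Big(-\tfrac{k}{2}K_Eu,u\Big)=\tfrac{k}{2}(K_Eu,u)\cdot(-1),
$$
i.e. $\|\mu_+u\|^2=\|\mu_-u\|^2-\tfrac{k}{2}(K_Eu,u)$, as claimed. (The sign bookkeeping is what needs care; I would track it by noting that $(-[\mu_+,\mu_-]u,u)$ contributes $+\tfrac{k}{2}(K_Eu,u)$ only if the overall convention places the curvature term with a minus, so one may end up instead with $\|\mu_+u\|^2 = \|\mu_-u\|^2 - \tfrac{k}{2}(K_E u,u)$ after collecting signs correctly.)

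The one genuinely non-routine point is justifying that there are no boundary contributions: the adjoint relations $(\mu_+)^*=-\mu_-+i\lambda_-$ etc. hold as stated for the $L^2$ pairing on all of $SM$, but when $\p M\neq\varnothing$ integration by parts in the $X$-direction produces a boundary integral over $\p(SM)$ with integrand proportional to $\langle Xu\cdot u\rangle$ or $|u|^2$ times the relevant normal component; this is precisely why the hypothesis $u|_{\p(SM)}=0$ is imposed, and I would invoke it to kill those terms. Since $X_\perp$ and $V$ are genuinely skew-adjoint without boundary terms (they are tangent to the fibres, resp. generate a flow preserving the boundary structure appropriately), the only delicate integration by parts is the one involving $X=\eta_-+\eta_+$, and there the Dirichlet condition on $u$ suffices. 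I expect this boundary analysis to be the main obstacle; the rest is the algebraic cancellation described above, which follows mechanically once the commutator lemma and the adjoint formulas are in hand.
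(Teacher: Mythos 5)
Your argument is essentially the paper's proof: both compute $\|\mu_\pm u\|^2$ via the adjoint formulas \eqref{eqn::adjoints of mu operators}, apply Lemma~\ref{lem::commutation between mu operators} together with $Vu=iku$ on $\Omega_k(SM;\C^n)$, and observe that the $\lambda_\pm$ terms cancel, leaving $-\tfrac{k}{2}(K_Eu,u)$. The intermediate signs in your displayed identity are off (since $(\mu_-)^*=-\mu_+-i\lambda_+$, the correct relation is $\|\mu_+u\|^2-\|\mu_-u\|^2=([\mu_+,\mu_-]u,u)+i(\lambda_-\mu_+u,u)+i(\lambda_+\mu_-u,u)$), but, as you anticipated, once the bookkeeping is fixed the cancellation goes through and yields exactly the claimed identity.
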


\begin{Remark}{\rm
In fact, this identity was derived earlier in \cite{assylbekov2017invariant} (see the the proof of Lemma~6.5 therein) via the Pestov identity. Here we give an alternative proof without the involvement of the Pestov identity. It also follows from more general Proposition~\ref{prop::weighted GK energy identity for mu+A operators} in Section~\ref{sec::injectivity of mu+A operators}.
}\end{Remark}

\begin{proof}
Using \eqref{eqn::adjoints of mu operators} and then Lemma~\ref{lem::commutation between mu operators},
\begin{align*}
\|\mu_+ u\|^2&=(\mu_+u,\mu_+u)=(\mu_+^*\mu_+u,u)=(-\mu_-\mu_+u,u)+(i\lambda_-\mu_+u,u)\\
&=-(\mu_+\mu_-u,u)+\frac{i}{2}(K_EVu,u)-(i\lambda_+\mu_-u,u)\\
&=(\mu_-^*\mu_-u,u)+\frac{i}{2}(K_EVu,u)=\|\mu_-u\|^2-\frac{k}{2}(K_Eu,u).
\end{align*}
The proof is thus complete.
\end{proof}

The proposition above now yields the following result.

\begin{proof}[Proof of Theorem~\ref{th::Carleman estimate}]
Using the hypothesis $K_E\le-\kappa$, for $k\ge m$ integer, we show
\begin{align*}
\|\mu_- u_k\|^2+\|\mu_+ &u_{-k}\|^2+\frac{k\kappa}{2}\Big(\|u_k\|^2+\|u_{-k}\|^2\Big)\\
&\quad\le \|\mu_-u_k\|^2-\frac{k}{2}(K_Eu_k,u_k)+\|\mu_+u_{-k}\|^2+\frac{(-k)}{2}(K_E u_{-k},u_{-k})\\
&\quad= \|\mu_+u_k\|^2+\|\mu_-u_{-k}\|^2,
\end{align*}
where at the last step we used Proposition~\ref{prop::GK energy identity for mu operators} to $u_k$ and $u_{-k}$. Multiplying this estimate by $k^{2s}$ for some $s>0$, we get
\begin{align*}
k^{2s}\Big(\|\mu_- u_k\|^2+\|\mu_+ u_{-k}\|^2\Big)&+\frac{\kappa}{2}k^{2s+1}\Big(\|u_k\|^2+\|u_{-k}\|^2\Big)\\
&\le k^{2s}\Big(\|\mu_+ u_k\|^2+\|\mu_- u_{-k}\|^2\Big)\\
&\le k^{2s}\Big(1+\frac{1}{\varepsilon_k}\Big)\Big(\|(\bfG_Eu)_{k+1}\|^2+\|(\bfG_Eu)_{-k-1}\|^2\Big)\\
&\quad+k^{2s}(1+\varepsilon_k)\Big(\|\mu_-u_{k+2}\|^2+\|\mu_+u_{-k-2}\|^2\Big),
\end{align*}
where $\{\varepsilon_k\}_{k=m}^\infty$ is a sequence of positive numbers to be chosen later. Summing these estimates over $k$ from $m$ to $N$, we obtain
\begin{multline*}
\sum_{k=m}^N k^{2s}\Big(\|\mu_- u_k\|^2+\|\mu_+ u_{-k}\|^2\Big)+\sum_{k=m}^N \frac{\kappa}{2}k^{2s+1}\Big(\|u_k\|^2+\|u_{-k}\|^2\Big)\\
\le \sum_{k=m+1}^{N+1} (k-1)^{2s}\Big(1+\frac{1}{\varepsilon_{k-1}}\Big)\Big(\|(\bfG_Eu)_{k}\|^2+\|(\bfG_Eu)_{-k}\|^2\Big)\\
+\sum_{k=m+2}^{N+2} (k-2)^{2s}(1+\varepsilon_{k-2})\Big(\|\mu_-u_{k}\|^2+\|\mu_+u_{-k}\|^2\Big).
\end{multline*}
Note here we are only considering the first N terms of the series, with the intent
of achieving convergence later. Now, we make an appropriate choice of $\varepsilon_k$, i.e.
$$
\varepsilon_{k-2}=\frac{k^{2s}}{(k-2)^{2s}}-1>0,\quad k\ge m+2,
$$
which yields
$$
(k-2)^{2s}(1+\varepsilon_{k-2})=k^{2s},\quad\text{for all}\quad k\ge m+2,
$$
and
$$
(k-1)^{2s}\Big(1+\frac{1}{\varepsilon_{k-1}}\Big)=\frac{(k^2-1)^{2s}}{(k+1)^{2s}-(k-1)^{2s}}\le \frac{k^{2s+1}}{2s},
$$
where in the last step we used \cite[Lemma~6.8]{paternain2018carleman}. Inserting these into the above estimate, we get
\begin{multline*}
\sum_{k=m}^{m+1} k^{2s}\Big(\|\mu_- u_k\|^2+\|\mu_+ u_{-k}\|^2\Big)+\sum_{k=m}^N \frac{\kappa}{2}k^{2s+1}\Big(\|u_k\|^2+\|u_{-k}\|^2\Big)\le\\
 \sum_{k=m+1}^{N+1} \frac{k^{2s+1}}{2s} \Big(\|(\bfG_Eu)_{k}\|^2+\|(\bfG_Eu)_{-k}\|^2\Big)+\sum_{k=N+1}^{N+2} k^{2s}\Big(\|\mu_-u_{k}\|^2+\|\mu_+u_{-k}\|^2\Big).
\end{multline*}
Now we take the limit as $N\to\infty$. Since the weights $k^{2s+1}$ grow at most polynomially for each fixed $s>0$, the last term goes to zero. Therefore, in particular, we have
$$
\sum_{k=m}^\infty k^{2s+1} \Big(\|u_k\|^2+\|u_{-k}\|^2\Big)\le \frac{1}{\kappa s}\sum_{k=m+1}^\infty k^{2s+1}\Big(\|(\bfG_E u)_k\|^2+\|(\bfG_E u)_{-k}\|^2\Big)
$$
as desired. This completes the proof.
\end{proof}



\section{Regularity results for the transport equations}\label{sec::regularity results}
Let $(M,g,E)$ be a non-trapping and strictly convex Gaussian thermostat on a compact manifold with boundary. Suppose that $A:TM\to \mathfrak{gl}(n,\C)$ is a connection and $\Phi:M\to \mathfrak{gl}(n,\C)$ is a Higgs field on the trivial bundle $M\times \C^n$. 

\subsection{Scattering relation}
Recall that $\tau(x,v)$, for $(x,v)\in SM$, is the first non-negative time when the thermostat geodesic $\gamma_{x,v}$ exits $M$. Since $(M,g,E)$ is non-trapping and strictly convex, $\tau$ is continuous on $SM$ and smooth on $SM\setminus S(\p M)$.
\begin{Lemma}\label{lem::tau is smooth on influx bundle}
For a non-trapping and strictly convex Gaussian thermostat $(M,g,E)$ on a compact manifold with boundary, the restricted function $\tau|_{\p_+ SM}$ is smooth on $\p_+ SM$.
\end{Lemma}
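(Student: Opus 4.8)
The plan is to realize $\tau|_{\p_+SM}$, in a neighbourhood of any prescribed point, as the graph of a smooth function extracted from the thermostat flow via the implicit function theorem; the analytic heart of the matter will be transversality of thermostat geodesics at the points where they leave $M$. First I would carry out the usual extension: since \eqref{def::thermostat 2D and lambda} is an ordinary differential equation, embed $(M,g,E)$ into a slightly larger open surface $(M_1,g,E)$ with $M\Subset M_1$, on which the thermostat flow $\phi_t$ is a smooth local flow. I then fix a boundary defining function $\rho\in C^\infty(M_1)$ with $\rho>0$ on the interior of $M$, $\rho=0$ on $\p M$, $\rho<0$ on $M_1\setminus M$, and $d\rho\neq 0$ along $\p M$, and put
$$
G(x,v,t):=\rho\big(\pi\circ\phi_t(x,v)\big),
$$
which is smooth where defined. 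For $(x,v)\in\p_+SM$ the curve $\gamma_{x,v}$ starts on $\p M$; by strict convexity (see Appendix~\ref{sec::convexity}) it is driven into the interior of $M$ for small $t>0$, so $\tau(x,v)>0$, and non-trapping guarantees $\tau(x,v)<\infty$, with $G\big(x,v,\tau(x,v)\big)=0$.

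Next I would prove transversality at the exit. Fix $(x,v)\in\p_+SM$, write $h(t):=G(x,v,t)=\rho\big(\gamma_{x,v}(t)\big)$ and $t_0:=\tau(x,v)$. By the definition of $\tau$ one has $h\geq 0$ on $[0,t_0]$ and $h<0$ on $(t_0,t_0+\varepsilon)$ for some $\varepsilon>0$, hence $\dot h(t_0)\leq 0$. If $\dot h(t_0)=0$, then differentiating $h$ twice along $\gamma_{x,v}$ and substituting \eqref{def::thermostat 2D and lambda} together with $|\dot\gamma_{x,v}|\equiv 1$ expresses $\ddot h(t_0)$ through $\Hess\rho$ and $E$ at the boundary point $\gamma_{x,v}(t_0)$; at a point of $\p M$ where $\dot h=0$ this quantity is, up to a positive factor, exactly $\Lambda\big(\gamma_{x,v}(t_0),\dot\gamma_{x,v}(t_0)\big)-\<E(\gamma_{x,v}(t_0)),\nu\>$, which is positive by strict convexity. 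Then $h$ would have a strict local minimum at $t_0$, contradicting $h<0$ just after $t_0$. Therefore $\dot h(t_0)<0$, i.e.
$$
\p_t G(x,v,t)\big|_{t=\tau(x,v)}=d\rho\big(\dot\gamma_{x,v}(\tau(x,v))\big)\neq 0.
$$

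Finally I would apply the implicit function theorem. Fixing $(x_0,v_0)\in\p_+SM$ and setting $t_0:=\tau(x_0,v_0)>0$, the trajectory $\phi_t(x_0,v_0)$ is defined and smooth for $t$ slightly beyond $t_0$ inside $SM_1$, so $G$ is smooth near $(x_0,v_0,t_0)$, and by the previous step $G(x_0,v_0,t_0)=0$ while $\p_tG(x_0,v_0,t_0)\neq 0$. Hence there are a neighbourhood $\mathcal U$ of $(x_0,v_0)$ in $SM_1$ and a smooth $\tilde\tau\colon\mathcal U\to\R$ with $\tilde\tau(x_0,v_0)=t_0$, $G\big(x,v,\tilde\tau(x,v)\big)=0$, and $\tilde\tau(x,v)$ the unique zero of $G(x,v,\cdot)$ near $t_0$. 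Since $\tau$ is continuous on $SM$ (as recalled just before the statement) and $\gamma_{x,v}\big(\tau(x,v)\big)\in\p M$ for $(x,v)\in\p_+SM$, the uniqueness clause forces $\tau(x,v)=\tilde\tau(x,v)$ for $(x,v)\in\p_+SM\cap\mathcal U$. Thus $\tau|_{\p_+SM}$ is smooth near $(x_0,v_0)$, and since $(x_0,v_0)$ was arbitrary, $\tau|_{\p_+SM}\in C^\infty(\p_+SM)$.

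I expect the only genuine obstacle to be the transversality step: discarding a tangential exit is precisely what strict convexity buys, and without it $\tau|_{\p_+SM}$ would in general fail to be smooth (only H\"older continuous of exponent $1/2$). One must also note, for vectors $(x,v)\in S(\p M)\subset\p_+SM$, that strict convexity pushes the trajectory into the interior of $M$ for $t>0$, so that $\tau(x,v)>0$ and the implicit function argument takes place strictly away from the spurious zero of $G(x,v,\cdot)$ at $t=0$.
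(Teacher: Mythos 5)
There is a genuine gap, and it sits exactly where the content of the lemma lies: at the glancing set $S(\p M)\subset\p_+SM$. Recall that $\p_+SM$ is defined with $\<v,\nu(x)\>\ge 0$, so it contains the tangential directions. Your claim that for every $(x,v)\in\p_+SM$ strict convexity ``drives the trajectory into the interior, so $\tau(x,v)>0$'' is backwards for $(x,v)\in S(\p M)$: strict convexity means precisely that a thermostat geodesic tangent to $\p M$ at $x$ meets $M$ only at $x$, so it leaves $M$ immediately and $\tau(x,v)=0$. (Concretely, with $\nu=\nabla\rho$ the inward normal one computes $\ddot h(0)=\Hess_x\rho(v,v)+\<\nu(x),E(x)\>=-\Lambda(x,v)+\<E(x),\nu(x)\><0$; note this is the opposite sign from the one you assert in your transversality step, where you claim the analogous quantity is positive and get a local minimum rather than a local maximum. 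For an exit time $t_0>0$ your contradiction still goes through, from the other side, but at a glancing point there is nothing to contradict.) Consequently, at $(x,v)\in S(\p M)$ the exit is tangential: $t_0=\tau(x,v)=0$ and $\p_tG(x,v,0)=\<\nu(x),v\>=0$, so the implicit function theorem cannot be applied to $G$ there. Your argument therefore only establishes smoothness of $\tau$ on $\p_+SM\setminus S(\p M)$, which is already contained in the standing fact that $\tau$ is smooth on $SM\setminus S(\p M)$; the whole point of the lemma is the behaviour at $S(\p M)$.

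The repair is the one the paper uses: since $h(x,v;0)=0$ identically on $\p_+SM$, factor out the known root and write $h(x,v;t)=t\,F(x,v;t)$ with
$$
F(x,v;T)=\<\nu(x),v\>_{g(x)}+\tfrac12\big(\Hess_x\rho(v,v)+\<\nu(x),E(x)\>_{g(x)}\big)T+R(x,v;T)T^2,
$$
$R$ smooth, and apply the implicit function theorem to $F$ instead of to $h$. On $\p_+SM$ the exit time $\tau(x,v)$ is a zero of $F(x,v;\cdot)$ (including at glancing points, where it is the zero $T=0$), and at $(x,v)\in S(\p M)$ one has $F(x,v;0)=0$ together with $2\,\p_TF(x,v;0)=-\Lambda(x,v)+\<E(x),\nu(x)\>_{g(x)}<0$ by strict convexity. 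This yields smoothness of $\tau|_{\p_+SM}$ in a neighbourhood of $S(\p M)$, which combined with the smoothness away from $S(\p M)$ finishes the proof. Everything else in your write-up (the extension to $M_1$, the boundary defining function, the identification of the implicit root with $\tau$ via continuity) is fine and matches the paper's setup.
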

\begin{proof}
Consider a smooth $\rho:M\to [0,\infty)$ such that $\rho^{-1}(0)=\p M$ and $|\nabla \rho|_{g}\equiv 1$ near $\p M$. Clearly, $\nabla\rho=\nu$. Write $h(x,v;t)=\rho(\gamma_{x,v}(t))$ for $(x,v)\in\p_+ SM$. Then
$$
h(x,v;0)=0,\quad \p_t h(x,v;0)=\<\nu(x),v\>_{g(x)},
$$
$$
\p_t^2 h(x,v;0)=\Hess_x\rho(v,v)+\<\nu(x),E(x)\>_{g(x)}.
$$
Therefore, we have
$$
h(x,v;t)=\<\nu(x),v\>_{g(x)}t+\frac{1}{2}(\Hess_x\rho(v,v)+\<\nu(x),E(x)\>_{g(x)})t^2+R(x,v;t)t^3
$$
for a smooth function $R$. Since $h(x,v;\tau(x,v))=0$, one can see that $T=\tau(x,v)$ solves the equation
$$
\<\nu(x),v\>_{g(x)}+\frac{1}{2}(\Hess_x\rho(v,v)+\<\nu(x),E(x)\>_{g(x)})T+R(x,v;T)T^2=0.
$$
Let us denote the left side of the above equation by $F(x,v;T)$. Then for $(x,v)\in S(\p M)$,
$$
2\p_TF(x,v;0)=(-\Lambda(x,v)+\<\nu(x),E(x)\>_{g(x)}).
$$
Here we used the fact that $\Hess_x\rho(v,v)=-\Lambda(x,v)$ for $(x,v)\in S(\p M)$. By strict convexity, we have $\p_TF(x,v;0)<0$ for $(x,v)\in S(\p M)$. Therefore, the Implicit Function Theorem implies the smoothness of $\tau$ in a neighborhood of $S(\p M)$. This completes the proof, since $\tau$ is smooth on $SM\setminus S(\p M)$.
\end{proof}

The \emph{scattering relation} is the map $\mathcal S:\p_+SM\to\p_-SM$ defined as
$$
\mathcal S(x,v)=\big(\gamma_{x,v}(\tau(x,v)),\dot\gamma_{x,v}(\tau(x,v))\big),\quad (x,v)\in \p_+SM.
$$
In other words, $\mathcal S$ maps the point and direction of entrance of a thermostat geodesic to the point and direction of the exit. By Lemma~\ref{lem::tau is smooth on influx bundle}, the scattering relation $\mathcal S$ is a diffeomorphism between $\p_+ SM$ and $\p_- SM$.

\subsection{Functions constant along the thermostat flow}
For $w\in C^\infty(\p_+SM;\C^n)$, the problem
$$
\mathbf G_E w_{\psi}=0\quad\text{in}\quad SM,\qquad w_{\psi}\big|_{\p_+ SM}=w
$$
has the unique solution $w_{\psi}:SM\to\C^n$. In other words, $w_{\psi}$ is the function that is constant along the orbits of the thermostat flow and equals $w$ on $\p_+ SM$.

Define the space
$$
C^\infty_\alpha(\p_+SM;\C^n):=\{w\in C^\infty(\p_+SM;\C^n):w_\psi\in C^\infty(SM;\C^n)\}.
$$
Consider the operator $\mathcal A:C(\p_+SM;\C^n)\to C(\p(SM);\C^n)$ given by
$$
\mathcal Aw(x,v):=\begin{cases}
w(x,v), &(x,v)\in\p_+SM,\\
(w\circ\mathcal S^{-1})(x,v), &(x,v)\in\p_- SM.
\end{cases}
$$
Clearly, $w_\psi|_{\p(SM)}=\mathcal Aw$. According to Theorem~\ref{th::characterization of smooth functions constant along the flow}, the space $C^\infty_\alpha(\p_+SM;\C^n)$ can be characterized as follows:
\begin{equation}\label{eqn::characterization of smooth functions constant along the flow}
C^\infty_\alpha(\p_+SM;\C^n):=\{w\in C^\infty(\p_+SM;\C^n):\mathcal Aw\in C^\infty(\p(SM);\C^n)\}.
\end{equation}

\subsection{Homogeneous equations}
For a given $w\in C^\infty(\p_+SM;\C^n)$, consider the unique solution $w^\sharp:SM\to \C^n$ for the transport equation
$$
(\mathbf G_E+A+\Phi)w^\sharp=0\quad\text{in}\quad SM,\qquad w^\sharp\big|_{\p_+ SM}=w.
$$
Define the space
$$
\mathcal S^\infty_{A,\Phi}(\p_+SM;\C^n):=\{w\in C^\infty(\p_+SM;\C^n): w^\sharp\in C^\infty(SM;\C^n)\}.
$$
One can see that
$$
w^\sharp(x,v)=U_{A,\Phi}(x,v)w_{\psi}(x,v),\qquad (x,v)\in SM,
$$
where $U_{A,\Phi}$ introduced in Section~\ref{sec::surfaces with boundary}. Introduce the operator
$$
\mathcal Q:C(\p_+SM;\C^n)\to C(\p(SM);\C^n) 
$$
defined, using the scattering relation $\mathcal S$ and the scattering data $C_{A,\Phi}$, as
$$
\mathcal Qw(x,v)=\begin{cases}
w(x,v), &(x,v)\in\p_+SM,\\
C_{A,\Phi}(x,v)(w\circ\mathcal S^{-1})(x,v), &(x,v)\in\p_- SM.
\end{cases}
$$
Then note that $w^\sharp|_{\p(SM)}=\mathcal Qw$. Now, we characterize the space $\mathcal S^\infty_{A,\Phi}(\p_+SM;\C^n)$ in terms of the operator $\mathcal Q$.
\begin{Lemma}\label{lem::characterization of smooth functions constant for X+a}
We have the following characterization
$$
\mathcal S^\infty_{A,\Phi}(\p_+SM;\C^n):=\{w\in C^\infty(\p_+SM;\C^n): \mathcal Qw\in C^\infty(\p(SM);\C^n)\}.
$$
\end{Lemma}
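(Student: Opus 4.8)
The plan is to reduce the statement to the characterization of $C^\infty_\alpha(\p_+SM;\C^n)$ given in Theorem~\ref{th::characterization of smooth functions constant along the flow} (equivalently \eqref{eqn::characterization of smooth functions constant along the flow}), by conjugating the attenuation away with a globally smooth, invertible integrating factor. The inclusion ``$\subseteq$'' is immediate, since $w^\sharp|_{\p(SM)}=\mathcal Qw$; so the real content is the reverse inclusion. In fact the argument will establish the equivalence $w^\sharp\in C^\infty(SM)\iff\mathcal Qw\in C^\infty(\p(SM);\C^n)$ for every $w\in C^\infty(\p_+SM;\C^n)$, which gives both inclusions at once.

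First I would enlarge the setting: embed $(M,g,E)$ into a slightly larger compact oriented surface $(M_1,g_1,E_1)$ on which the induced thermostat is still non-trapping and strictly convex, and extend $A$ and $\Phi$ smoothly to $M_1$; this is standard, since both non-trapping and strict convexity are open conditions. Let $U^1:=U^{M_1}_{A,\Phi}:SM_1\to GL(n,\C)$ solve $(\bfG_E+A+\Phi)U^1=0$ with $U^1|_{\p_+SM_1}=\Id$. Since $M\Subset M_1$, the compact set $SM$ is disjoint from the glancing set $S(\p M_1)$, and $U^1$ is obtained by solving a linear ODE along thermostat geodesics from $\p_+SM_1$ over the backward exit time of $M_1$, which is smooth away from $S(\p M_1)$. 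Hence $U^1$ is $C^\infty$ on a neighbourhood of $SM$ in $SM_1$, so $U^1|_{SM}\in C^\infty(SM;GL(n,\C))$ is invertible and satisfies $(\bfG_E+A+\Phi)U^1=0$ on $SM$. (Note that the ``original'' integrating factor $U_{A,\Phi}$ is generally \emph{not} smooth up to $S(\p M)$; the point of passing to $M_1$ is precisely to sidestep the square-root type glancing singularity of the exit time of $M$.)

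Next I would use $U^1$ as an integrating factor. Given $w\in C^\infty(\p_+SM;\C^n)$ with transport solution $w^\sharp$, put $\tilde w:=(U^1)^{-1}w^\sharp$ on $SM$. Differentiating $(U^1)^{-1}U^1=\Id$ yields $\bfG_E((U^1)^{-1})=(U^1)^{-1}(A+\Phi)$, and a one-line computation then gives $\bfG_E\tilde w=0$ on $SM$, while $\tilde w|_{\p_+SM}=(U^1|_{\p_+SM})^{-1}w=:\tilde w_0\in C^\infty(\p_+SM;\C^n)$. By uniqueness of solutions of $\bfG_E(\cdot)=0$ with prescribed influx data, $\tilde w=(\tilde w_0)_\psi$. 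Since $U^1|_{SM}$ and its inverse are smooth on $SM$, and since $\mathcal A\tilde w_0=(\tilde w_0)_\psi|_{\p(SM)}=(U^1|_{\p(SM)})^{-1}\,w^\sharp|_{\p(SM)}=(U^1|_{\p(SM)})^{-1}\mathcal Qw$ with $(U^1|_{\p(SM)})^{-1}$ smooth and invertible on $\p(SM)$, we obtain
\begin{align*}
w^\sharp=U^1\tilde w\in C^\infty(SM)&\iff\tilde w=(\tilde w_0)_\psi\in C^\infty(SM)\\
&\iff\tilde w_0\in C^\infty_\alpha(\p_+SM;\C^n)\\
&\iff\mathcal A\tilde w_0\in C^\infty(\p(SM);\C^n)\\
&\iff\mathcal Qw\in C^\infty(\p(SM);\C^n),
\end{align*}
where the third equivalence is Theorem~\ref{th::characterization of smooth functions constant along the flow} and the last one uses the identity $\mathcal A\tilde w_0=(U^1|_{\p(SM)})^{-1}\mathcal Qw$. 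This is the desired characterization.

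The step I expect to be the main obstacle is the construction of the enlargement $(M_1,g_1,E_1)$ together with the regularity claim for $U^1$ near $SM$: one must check that the thermostat on the enlarged surface can be kept non-trapping and strictly convex, and that $U^1$ is genuinely smooth up to $S(\p M)\subset SM$. Granting Theorem~\ref{th::characterization of smooth functions constant along the flow}, everything else is formal.
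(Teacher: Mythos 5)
Your proposal is correct and follows essentially the same route as the paper: both arguments extend the data to a slightly larger non-trapping, strictly convex thermostat $(M_1,g,E)$, take the globally smooth integrating factor $R=U^{M_1}_{A,\Phi}$ restricted to $SM$ (your $U^1$), write $w^\sharp=R\,(rw)_\psi$ with $r=R^{-1}|_{\p_+SM}$ (your $\tilde w_0=rw$), and conclude via the identity $\mathcal A(rw)=R^{-1}|_{\p(SM)}\mathcal Qw$ together with Theorem~\ref{th::characterization of smooth functions constant along the flow}. No substantive differences.
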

\begin{proof}
Consider  a closed manifold $N$ containing $M$. We smoothly extend the metric $g$ and the vector field $E$ to $N$ and denote extensions by the same notations. Next, embed $M$ into the interior of a compact manifold $M_1\subset N$ with boundary. Choose $M_1$ to be sufficiently close to $M$ so that $(M_1,g,E)$ is also non-trapping and strictly convex.  We also extend $A$ and $\Phi$ smoothly to $N$.

Consider the unique solution $R:SM_1\to GL(n,\C)$ to the transport equation
$$
(\mathbf G_E+A+\Phi)R=0\quad\text{in}\quad SM_1,\qquad R\big|_{\p_+ SM_1}=\Id.
$$
Then the restriction of $R$ to $SM$, still denoted by $R$, is in $C^\infty(SM;GL(n,\C))$ and solves $(\mathbf G_E+A+\Phi)R=0$ in $SM$. Set $r:=R^{-1}|_{\p_+ SM}$. Then we can express $w^\sharp$ and $\mathcal Qw$ as
$$
w^\sharp=R(rw)_{\psi}\quad\text{and}\quad \mathcal Qw(x,v)=\begin{cases}
R(x,v)r(x,v)w(x,v),&\text{if}\quad (x,v)\in\p_+ SM,\\
R(x,v)((rw)\circ\mathcal S^{-1})(x,v),&\text{if}\quad (x,v)\in\p_- SM.
\end{cases}
$$
From this it is obvious that $\mathcal Qw\in C^\infty(\p(SM);\C^n)$ if $w^\sharp\in C^\infty(SM;\C^n)$. Now suppose $\mathcal Qw\in C^\infty(\p(SM);\C^n)$. Since $R\in C^\infty(SM;GL(n,\C))$ and $\mathcal A(rw)=R^{-1}|_{\p(SM)}\mathcal Qw$, this, together with \eqref{eqn::characterization of smooth functions constant along the flow}, implies that $(rw)_\psi\in C^\infty(SM;\C^n)$. Using the fact $R\in C^\infty(SM;GL(n,\C))$ once again, we can claim $w^\sharp\in C^\infty(SM;\C^n)$, finishing the proof.
\end{proof}

\subsection{Non-homogeneous equations}
Given $f\in C^\infty(SM;\C^n)$, consider the unique solution $u^f:SM\to \C^n$ for the transport equation
$$
(\bfG_E+A+\Phi)u=-f\quad\text{in}\quad SM,\qquad u\big|_{\p_- SM}=0.
$$
Note that $U_{A,\Phi}^{-1}$ solves $\bfG_EU_{A,\Phi}^{-1}-U_{A,\Phi}^{-1}(A+\Phi)=0$. Therefore, $\bfG_E(U_{A,\Phi}^{-1}u^f)=-U_{A,\Phi}^{-1}f$. Integrating along $\gamma_{x,v}:[0,\tau(x,v)]\to M$, for $(x,v)\in\p_+SM$, we obtain the following integral expression
$$
u^f(x,v)=\int_0^{\tau(x,v)}U_{A,\Phi}^{-1}(\gamma_{x,v}(t),\dot\gamma_{x,v}(t))f(\gamma_{x,v}(t),\dot\gamma_{x,v}(t))\,dt,\quad (x,v)\in\p_+ SM.
$$
Since $\tau$ is a non-smooth function on $SM$ in general, the function $u^f$ also may fail to be smooth on $SM$. But we can and shall show that $u^f\in C^\infty(SM;\C^n)$ if $I_{A,\Phi}f=0$.
\begin{Proposition}\label{prop::u^f is smooth if If=0}
Let $(M,g,E)$ be a non-trapping and strictly convex Gaussian thermostat on a compact manifold with boundary. Suppose that $A:TM\to \mathfrak{gl}(n,\C)$ is a connection and $\Phi:M\to \mathfrak{gl}(n,\C)$ is a Higgs field on the trivial bundle $M\times \C^n$. If $I_{A,\Phi}f=0$ for $f\in C^\infty(SM;\C^n)$, then $u^f\in C^\infty(SM;\C^n)$.
\end{Proposition}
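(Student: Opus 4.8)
\emph{Proof proposal.} The plan is to strip off the attenuation with an integrating factor, compare $u^f$ with an auxiliary solution that is manifestly smooth on $SM$ because it is built by integrating over slightly longer chords in an enlarged thermostat, and then use the hypothesis $I_{A,\Phi}f=0$ together with the characterization \eqref{eqn::characterization of smooth functions constant along the flow} to show that the resulting flow--invariant discrepancy is smooth.

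First I would set up the enlargement exactly as in the proof of Lemma~\ref{lem::characterization of smooth functions constant for X+a}: embed $M$ into the interior of a compact non-trapping strictly convex Gaussian thermostat $(M_1,g,E)$ close to $M$, extend $A$ and $\Phi$ smoothly, and let $R\colon SM_1\to GL(n,\C)$ be the solution of $(\bfG_E+A+\Phi)R=0$ with $R|_{\p_+SM_1}=\Id$. Since $SM$ is a compact subset of $SM_1\setminus S(\p M_1)$, the restriction $R|_{SM}$ lies in $C^\infty(SM;GL(n,\C))$ and is pointwise invertible. Set $w:=R^{-1}u^f$ on $SM$. Using $\bfG_E R^{-1}=R^{-1}(A+\Phi)$, the usual integrating--factor computation gives $\bfG_E w=-\widetilde f$ on $SM$, where $\widetilde f:=R^{-1}f\in C^\infty(SM;\C^n)$, and $w|_{\p_-SM}=0$. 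The point of the hypothesis is that $I_{A,\Phi}f=u^f|_{\p_+SM}=0$ forces $u^f$, hence $w$, to vanish on \emph{all} of $\p(SM)$; and since $w(x,v)=\int_0^{\tau_M(x,v)}\widetilde f(\phi_t(x,v))\,dt$, $w$ is continuous on $SM$.

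Next I would construct the auxiliary solution. Extend $\widetilde f$ to some $\widehat f\in C^\infty(SM_1;\C^n)$, and let $\widehat w(x,v):=\int_0^{\tau_1(x,v)}\widehat f(\phi_t(x,v))\,dt$, where $\tau_1$ is the exit time of $(M_1,g,E)$; then $\bfG_E\widehat w=-\widehat f$ on $SM_1$. Because $\tau_1$ is smooth off the glancing set $S(\p M_1)$ and $SM$ is disjoint from $S(\p M_1)$, we get $\widehat w|_{SM}\in C^\infty(SM;\C^n)$ with $\bfG_E\widehat w=-\widetilde f$ on $SM$. Now $q:=w-\widehat w$ satisfies $\bfG_E q=0$ on $SM$, so $q$ is continuous and constant along thermostat orbits; its boundary restriction $p:=q|_{\p_+SM}=-\widehat w|_{\p_+SM}$ is smooth, and $q$ is the flow--invariant extension $p_\psi$. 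Flow--invariance of $q$ and the definition of the scattering relation give $(p\circ\mathcal S^{-1})(x,v)=q(x,v)$ for $(x,v)\in\p_-SM$, whence $\mathcal A p=q|_{\p(SM)}=-\widehat w|_{\p(SM)}\in C^\infty(\p(SM);\C^n)$, using $w|_{\p(SM)}=0$. By \eqref{eqn::characterization of smooth functions constant along the flow} this yields $p_\psi\in C^\infty(SM;\C^n)$, so $w=\widehat w+p_\psi\in C^\infty(SM;\C^n)$ and finally $u^f=Rw\in C^\infty(SM;\C^n)$.

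The main obstacle is the construction of $\widehat w$. A direct forward integration of $-\widetilde f$ over $M$ just reproduces $w$ and is not smooth near $S(\p M)$, since $\tau_M$ is only continuous there; the device is to integrate the \emph{same} right--hand side over the longer chords of the enlarged thermostat $M_1$, along which the exit time $\tau_1$ is smooth on all of $SM$, at the cost of a flow--invariant correction $q=w-\widehat w$. It is exactly the hypothesis $I_{A,\Phi}f=0$ (forcing $w|_{\p(SM)}=0$) that removes the potentially non-smooth scattering contribution from $\mathcal A p$ and leaves $\mathcal A p=-\widehat w|_{\p(SM)}$, smooth; everything else is bookkeeping with the integrating factor and the characterization \eqref{eqn::characterization of smooth functions constant along the flow}.
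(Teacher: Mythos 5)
Your proof is correct and follows essentially the same route as the paper: extend everything to a slightly larger strictly convex non-trapping thermostat $(M_1,g,E)$, subtract a particular solution that is smooth on $SM$ because it solves the transport equation on all of $SM_1$, and use the hypothesis $I_{A,\Phi}f=0$ to see that the remaining flow-constant piece has smooth boundary data, hence is smooth by the characterization of $C^\infty_\alpha$. The only cosmetic difference is that you conjugate by the integrating factor $R$ up front and appeal directly to \eqref{eqn::characterization of smooth functions constant along the flow}, whereas the paper keeps the attenuation and invokes Lemma~\ref{lem::characterization of smooth functions constant for X+a}, whose proof performs the very same $R$-conjugation.
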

\begin{proof}
Let $N$ and $M_1$ be as in the proof of Lemma~\ref{lem::characterization of smooth functions constant for X+a}. We consider the same extensions of $g$, $E$, $A$ and $\Phi$ to $N$. We also extend $f$ smoothly to $SN$ preserving the former notation for extension.

Consider the unique solution $a:SM_1\to\C^n$ for the transport equation
$$
(\bfG_E+A+\Phi)a=-f\quad\text{in}\quad SM_1,\qquad a\big|_{\p_- SM_1}=0.
$$
Then $a|_{SM}$, which we keep denoting by $a$, is in $C^\infty(SM;\C^n)$ and solves $(\mathbf G_E+A+\Phi)a=-f$ in $SM$. Then $w:=u^f-a$ satisfies $(\mathbf G_E+A+\Phi)w=0$ in $SM$. Since $u^f|_{\p_- SM}=0$ and $u^f|_{\p_+ SM}=I_{A,\Phi}f=0$, we have
$$
\mathcal Q(w|_{\p_+ SM})=w|_{\p(SM)}=-a|_{\p(SM)}
$$
which is in $C^\infty(\p(SM);\C^n)$. Then $w\in C^\infty(SM;\C^n)$ by Lemma~\ref{lem::characterization of smooth functions constant for X+a}, and hence, $u^f\in C^\infty(SM;\C^n)$. The proof is complete.
\end{proof}


\section{Injectivity results for the linear problems}\label{sec::linear problems}
In the present section, we prove injectivity results for the linear problems stated in the introduction, namely Theorem~\ref{th::main 1} and Theorem~\ref{th::main 3}. To that end, we first need the following theorem.
\begin{Theorem}\label{th::finite degree}
Let $(M,g,E)$ be a Gaussian thermostat on a compact oriented surface such that $K_E<0$. Let also $A:TM\to\mathfrak{gl}(n,\C)$ be a connection and $\Phi:M\to \mathfrak{gl}(n,\C)$ be a Higgs field. Assume that $f\in C^\infty(SM;\C^n)$ has finite degree. If $u\in C^\infty(SM;\C^n)$, with $u|_{\p(SM)}=0$ in the case $\p M\neq \varnothing$, satisfies
$$
(\bfG_E+A+\Phi)u=f\quad\text{in}\quad SM,
$$
then $u$ also has finite degree.
\end{Theorem}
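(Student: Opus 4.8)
The plan is to expand the equation into Fourier modes along the circle fibres of $SM$ and then play the zeroth-order attenuation terms $Au$ and $\Phi u$ against the gain $1/s$ supplied by the Carleman estimate of Theorem~\ref{th::Carleman estimate}.

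First I would fix notation. Since $f$ has finite degree, there is an integer $m\ge 1$ with $f_k=0$ for $|k|\ge m+1$. A connection splits on $SM$ as $A=A_{-1}+A_{1}$ with $A_{\pm 1}\in\Omega_{\pm 1}$, written $A_\pm:=A_{\pm 1}$, and multiplication by $A_\pm$ raises, resp.\ lowers, the Fourier degree by one, while a Higgs field $\Phi$ has degree $0$ and multiplication by it preserves each $H_k(SM;\C^n)$. Projecting $(\bfG_E+A+\Phi)u=f$ onto $H_k(SM;\C^n)$ gives, for every $k\in\Z$,
$$
(\bfG_E u)_k=f_k-A_+u_{k-1}-A_-u_{k+1}-\Phi u_k,
$$
so that, with $C_0:=\max\{\|A_+\|_{L^\infty},\|A_-\|_{L^\infty},\|\Phi\|_{L^\infty}\}$ (finite since $M$ is compact),
$$
\|(\bfG_E u)_k\|^2\le 3C_0^2\big(\|u_{k-1}\|^2+\|u_k\|^2+\|u_{k+1}\|^2\big)\qquad\text{whenever }|k|\ge m+1.
$$

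Next I would apply Theorem~\ref{th::Carleman estimate} --- its boundary hypothesis is exactly the one assumed here --- with the integer parameter $m$ there replaced by a large $N\ge m$ and with the exponent $s$ chosen equal to $N$. Set $S:=\sum_{k\ge N}k^{2N+1}\big(\|u_k\|^2+\|u_{-k}\|^2\big)$, which is finite because the Fourier coefficients of a smooth function on $SM$ decay faster than any polynomial. The estimate gives $S\le(\kappa N)^{-1}\sum_{k\ge N+1}k^{2N+1}\big(\|(\bfG_E u)_k\|^2+\|(\bfG_E u)_{-k}\|^2\big)$, where $\kappa>0$ is a constant with $K_E\le-\kappa$. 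Inserting the pointwise bound above (legitimate since $k\ge N+1\ge m+1$) and re-indexing, the contributions of the $u_k$- and $u_{k+1}$-type terms are each $\le S$, while those of the $u_{k-1}$-type terms acquire only the factor $(1+1/N)^{2N+1}\le e^{(2N+1)/N}\le e^{3}$, coming from $(j+1)^{2N+1}\le j^{2N+1}(1+1/N)^{2N+1}$ for $j\ge N$. Altogether $S\le C\,(\kappa N)^{-1}S$ with $C$ depending only on $C_0$. Taking $N\ge m$ so large that $C<\kappa N$ forces $S=0$, i.e.\ $u_k=0$ for all $|k|\ge N$; thus $u$ has degree at most $N-1$, hence finite degree.

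The only genuine obstacle is the presence of the attenuation terms: $A_+u_{k-1}+A_-u_{k+1}+\Phi u_k$ shifts the Fourier index without producing any decay in $k$, so the Carleman estimate used with a fixed exponent cannot absorb it. The fix is to let the exponent $s$ grow together with the cutoff $N$ (here $s=N$), so that the $1/s$ decay in Theorem~\ref{th::Carleman estimate} dominates the bounded distortion $(1+1/N)^{2N+1}\le e^{3}$ produced by a single index shift; everything else --- the Fourier splitting of the equation, the $L^\infty$ estimates, the re-indexings --- is routine.
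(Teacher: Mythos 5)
Your argument is correct and is essentially the paper's own proof: the same Fourier projection of the transport equation, the same $L^\infty$ bound on the attenuation terms, and the same use of Theorem~\ref{th::Carleman estimate} with the exponent $s$ taken large relative to the cutoff so that the index-shift distortion $(1+1/k)^{2s+1}$ stays bounded by a fixed constant while the $1/(\kappa s)$ gain absorbs it. The only (cosmetic) difference is that you couple $s=N$, whereas the paper fixes $s>eC/\kappa$ and then takes the cutoff $m\ge 2s+1$.
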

\begin{proof}
Suppose that $f_k=0$ for all $|k|\ge m'$ for some $m'\ge 0$ and take some $m\ge m'$. Then, writing $A=A_-+A_+$ with $A_\pm\in\Omega_{\pm 1}(SM;\C^n)$,
$$
(\bfG_E u)_k=-A_{-}u_{k+1}-A_{+}u_{k-1}-\Phi u_k\quad\text{for all}\quad |k|\ge m.
$$
Therefore, there is $R>0$ constant such that
$$
\|(\bfG_E u)_k\|^2\le R\Big(\|u_{k+1}\|^2+\|u_{k-1}\|^2+\|u_{k}\|^2\Big)\quad\text{for all}\quad |k|\ge m.
$$
Using these in the estimate from Theorem~\ref{th::Carleman estimate}, we obtain
\begin{align*}
&\sum_{k=m}^\infty k^{2s+1} \Big(\|u_k\|^2+\|u_{-k}\|^2\Big)\\
&\le \frac{R}{\kappa s}\sum_{k=m+1}^\infty k^{2s+1}\Big(\|u_{k+1}\|^2+\|u_{k-1}\|^2+\|u_{k}\|^2+\|u_{-k+1}\|^2+\|u_{-k-1}\|^2+\|u_{-k}\|^2\Big)\\
&\le \frac{C}{\kappa s}\sum_{k=m}^\infty (k+1)^{2s+1}\Big(\|u_{k}\|^2+\|u_{-k}\|^2\Big).
\end{align*}
Let us take $s>0$ such that $m\ge 2s+1$. Then
$$
(k+1)^{2s+1}=\Big(1+\frac{1}{k}\Big)^{2s+1}k^{2s+1}\le \Big(1+\frac{1}{k}\Big)^{k}k^{2s+1}\le e k^{2s+1}\quad\text{for all}\quad k\ge m.
$$
Hence, for $m\ge 2s+1$, we have
$$
\sum_{k=m}^\infty k^{2s+1} \Big(\|u_k\|^2+\|u_{-k}\|^2\Big)\le \frac{eC}{\kappa s}\sum_{k=m}^\infty k^{2s+1}\Big(\|u_{k}\|^2+\|u_{-k}\|^2\Big)
$$
Now, we fix $s>eC/\kappa$ and $m=\max(2s+1,m')$. Then
$$
\Big(1-\frac{eC}{\kappa s}\Big)\sum_{k=m}^\infty k^{2s+1} \Big(\|u_k\|^2+\|u_{-k}\|^2\Big)\le 0.
$$
Since $(1-eC/\kappa s)>0$, this allows us to conclude $u_k=0$ for all $|k|\ge m$.
\end{proof}

\subsection{Surfaces with boundary}
The following result is a restatement of Theorem~\ref{th::main 1}.
\begin{Theorem}\label{th::injectivity for tensors}
Let $(M,g,E)$ be a strictly convex and non-trapping Gaussian thermostat on a compact oriented surface with boundary such that $K_E<0$. Let also $A:TM\to \mathfrak{gl}(n,\C)$ be a connection and $\Phi:M\to \mathfrak{gl}(n,\C)$ be a Higgs field. For $m\ge 0$ integer, assume that $f\in C^\infty(SM;\C^n)$ with $f_k=0$ for all $|k|\ge m+1$. If $I_{A,\Phi}f=0$, then $f=(\bfG_E+A+\Phi)u$ for some $u\in C^\infty(SM;\C^n)$ with $u|_{\p(SM)}=0$ such that $u_k=0$ for all $|k|\ge m$.
\end{Theorem}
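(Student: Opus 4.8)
The plan is to run the standard argument for injectivity of attenuated ray transforms: first produce a smooth solution of the transport equation, then upgrade it to finite Fourier degree, and finally peel off its top Fourier modes using the negative-curvature energy estimates.

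I would begin by observing that, since $I_{A,\Phi}f=0$, Proposition~\ref{prop::u^f is smooth if If=0} yields $u^f\in C^\infty(SM;\C^n)$; setting $u:=-u^f$ gives $u\in C^\infty(SM;\C^n)$ with $(\bfG_E+A+\Phi)u=f$, $u|_{\p_-SM}=0$ by construction, and $u|_{\p_+SM}=-I_{A,\Phi}f=0$, so $u|_{\p(SM)}=0$. Since the transport equation with vanishing boundary data has a unique solution, this $u$ is the only candidate, and the whole theorem comes down to showing it has degree $\le m-1$. As $f$ has finite degree, Theorem~\ref{th::finite degree} shows that $u$ has finite degree too; unless $u\equiv0$ (then $f=0$ and we are done), I would fix $N\ge0$ with $u_k=0$ for all $|k|\ge N+1$ and $u_N\neq0$ or $u_{-N}\neq0$, so that the goal becomes $N\le m-1$.

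Next I would expand $(\bfG_E+A+\Phi)u=f$ into Fourier modes: writing $\bfG_E=\mu_-+\mu_+$, $A=A_-+A_+$ with $A_\pm\in\Omega_{\pm1}(SM;\C^n)$, and $\Phi\in\Omega_0$, the $\Omega_j$-component reads
$$
(\mu_-+A_-)u_{j+1}+(\mu_++A_+)u_{j-1}+\Phi u_j=f_j,\qquad j\in\Z.
$$
Assume $N\ge m$ for contradiction. Choosing $j=N+1$ and $j=-(N+1)$, and using $u_{N+1}=u_{N+2}=u_{-N-1}=u_{-N-2}=0$ and $f_{\pm(N+1)}=0$, one gets $(\mu_++A_+)u_N=0$ and $(\mu_-+A_-)u_{-N}=0$. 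The technical results of Section~\ref{sec::injectivity of mu+A operators} — a weighted Guillemin--Kazhdan energy identity for the operators $\mu_\pm+A_\pm$ (Proposition~\ref{prop::weighted GK energy identity for mu+A operators}) together with $K_E<0$ — give that $\mu_++A_+$ is injective on $\Omega_k(SM;\C^n)$ and $\mu_-+A_-$ is injective on $\Omega_{-k}(SM;\C^n)$ for all $k\ge1$. For $m\ge1$ this means $N\ge1$, hence $u_N=u_{-N}=0$, contradicting the choice of $N$, so $N\le m-1$. For $m=0$ the same dichotomy forces $N=0$, i.e. $u=u_0$; then the $j=\pm1$ equations give $(\mu_\pm+A_\pm)u_0=f_{\pm1}=0$, whose sum is $\bfG_Eu_0+Au_0=Xu_0+Au_0=d_Au_0=0$, so $u_0$ is a $d_A$-parallel section, it vanishes on the nonempty set $\p M$, and is therefore $\equiv0$, giving $f=\Phi u_0=0$. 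In every case $u$ has degree $\le m-1$, which is the assertion.

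The hard part is the injectivity of the twisted operators $\mu_\pm+A_\pm$ on the eigenspaces $\Omega_{\pm k}$, $k\ge1$, invoked above; everything else is bookkeeping with Fourier modes. Unlike the untwisted identity of Proposition~\ref{prop::GK energy identity for mu operators}, the commutator $[\mu_++A_+,\mu_-+A_-]$ produces the extra, generally sign-indefinite, term $\frac{i}{2}\star F_A$ coming from Lemma~\ref{lem::commutation formula for mu+A operators}, and controlling it is exactly what forces the \emph{weighted} version of the energy identity, the weight being built from the solution of an auxiliary equation on $M$. That this equation is solvable with the needed properties uses $\p M\neq\varnothing$, which is why the boundary case carries no analogue of the curvature/topology hypotheses \eqref{eqn::condition 1} or \eqref{eqn::condition 2} needed on closed surfaces. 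I expect setting up and exploiting that weighted estimate to be the main obstacle.
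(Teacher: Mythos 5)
Your overall architecture is the same as the paper's: use Proposition~\ref{prop::u^f is smooth if If=0} to get a smooth solution $u$ of $(\bfG_E+A+\Phi)u=f$ with $u|_{\p(SM)}=0$, apply Theorem~\ref{th::finite degree} to get finite degree, and then peel off the top Fourier modes using injectivity of $\mu_\pm+A_\pm$ on $\Omega_{\pm k}$, $k\ge 1$. The bookkeeping with Fourier modes, and your extra treatment of the $m=0$ case via the parallel section $d_Au_0=0$ vanishing on $\p M$, are fine.

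The gap is in the tool you invoke for the key injectivity step. You appeal to the weighted Guillemin--Kazhdan identity (Proposition~\ref{prop::weighted GK energy identity for mu+A operators}) together with $K_E<0$, but that identity is established only for \emph{unitary} connections $A:TM\to\mathfrak u(n)$: its proof computes $P_\varphi^*$ and $Q_\varphi^*$ using $A_+^*=-A_-$, which is exactly the skew-Hermitian condition. The theorem you are proving allows an arbitrary connection $A:TM\to\mathfrak{gl}(n,\C)$, and for such $A$ the adjoint of $\mu_++A_+$ involves the conjugate transpose of $A$ rather than $-A_-$, so the commutator identity producing the term $\frac{i}{2}\star F_A$ and the resulting positivity argument break down. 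No choice of scalar weight $\varphi$ repairs this, since the problem is not the size or sign of $\star F_A$ but the failure of the energy identity itself. The paper handles the boundary case by an entirely different device, Theorem~\ref{th::injectivity of mu+A operators in boundary case}: on $\Omega_{\pm k}$ one has $\mu_\pm+A_\pm=\eta_\pm+(A_\pm\mp k\theta_{\pm1})$ with $\theta$ the $1$-form dual to $E$, so the thermostat operator becomes the geodesic operator twisted by the modified connection $A^k:=A-k\theta$, and injectivity for general $\mathfrak{gl}(n,\C)$ connections with vanishing boundary data then follows from \cite[Theorem~5.2]{guillarmou2016negconnections} (whose proof relies on holomorphic integrating factors / Carleman-type arguments, not on a Guillemin--Kazhdan positivity identity). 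The weighted identity you describe is the paper's tool for the \emph{closed}, \emph{unitary} case (Theorem~\ref{th::main 3}), where the hypotheses \eqref{eqn::condition 1} or \eqref{eqn::condition 2} compensate for the Gauss--Bonnet obstruction; it is not available here. To close your argument you should replace that step by the reduction to the geodesic result via $A\mapsto A-k\theta$.
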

\begin{proof}
By the results of Section~\ref{sec::regularity results}, there is $u\in C^\infty(SM;\C^n)$ with $u|_{\p(SM)}=0$ such that
\begin{equation}\label{eqn::transport equation}
(\bfG_E+A+\Phi)u=f\quad\text{in}\quad SM.
\end{equation}
Then $u$ is of finite degree by Theorem~\ref{th::finite degree}. Hence, there is $l>0$ such that $u_k=0$ for all $|k|\ge l$. Now, our goal is to show that $l\le m$. Suppose this is not the case, i.e. $l>m$. Then from \eqref{eqn::transport equation}, we get $(\mu_++A_{+})u_{l-1}=0$ and $(\mu_-+A_{-})u_{-l+1}=0$. Since $u_{-l+1}|_{\p(SM)}=u_{l-1}|_{\p(SM)}=0$, Theorem~\ref{th::injectivity of mu+A operators in boundary case} of the next section implies that $u_{-l+1}=u_{l-1}=0$. Continuing this process, we show that $u_k=0$ for all $|k|\ge m$. This concludes the proof.
\end{proof}

\subsection{Closed surfaces}
Similarly, we restate Theorem~\ref{th::main 3} as follows.
\begin{Theorem}\label{th::injectivity for tensors}
Let $(M,g,E)$ be a  Gaussian thermostat on a closed oriented surface such that $K_E\le -\kappa$ for some constant $\kappa>0$. Suppose $A:TM\to \mathfrak{u}(n)$ is a unitary connection for which either
\eqref{eqn::condition 1}  or \eqref{eqn::condition 2} is satisfied for all $k\ge 1$. Let also $\Phi:M\to \mathfrak{gl}(n,\C)$ be a Higgs field. For $m\ge 0$ integer, assume that $f\in C^\infty(SM;\C^n)$ with $f_k=0$ for all $|k|\ge m+1$. If $u\in C^\infty(SM;\C^n)$ satisfies $(\bfG_E+A+\Phi)u=f$, then $u_k=0$ for all $|k|\ge m$.
\end{Theorem}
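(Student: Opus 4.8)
The plan is to mirror the argument used above for the surfaces-with-boundary case, with the boundary injectivity statement for the Guillemin-Kazhdan type operators replaced by its closed-surface analog; it is precisely this replacement that forces the spectral hypotheses \eqref{eqn::condition 1}--\eqref{eqn::condition 2} into the picture. Unlike the boundary case, no preliminary regularity step is needed, since $u$ is assumed smooth. First I would note that the Carleman estimate of Theorem~\ref{th::Carleman estimate} holds on a closed surface with $K_E\le-\kappa$, so Theorem~\ref{th::finite degree} applies directly to $(\bfG_E+A+\Phi)u=f$ and gives that $u$ has finite degree. Thus there is a smallest integer $l\ge 0$ with $u_k=0$ for all $|k|\ge l$, and the entire content of the theorem is the inequality $l\le m$.

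Assume, for contradiction, $l>m$. Writing $A=A_-+A_+$ with $A_\pm\in\Omega_{\pm1}(SM;\C^n)$, recalling $\bfG_E=\mu_-+\mu_+$ with $\mu_\pm\colon\Omega_k(SM;\C^n)\to\Omega_{k\pm1}(SM;\C^n)$, and using that $\Phi$ preserves each $H_k(SM;\C^n)$, I would project the transport equation onto $H_k(SM;\C^n)$ to get
\[
(\mu_-+A_-)u_{k+1}+(\mu_++A_+)u_{k-1}+\Phi u_k=f_k \qquad\text{for all }k\in\Z .
\]
Choosing $k=l$ and using $u_{l+1}=u_l=0$ together with $f_l=0$ (which holds since $l\ge m+1$) yields $(\mu_++A_+)u_{l-1}=0$; symmetrically, $k=-l$ gives $(\mu_-+A_-)u_{-l+1}=0$. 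The next step is to invoke the closed-surface injectivity theorem for $\mu_++A_+$ on $\Omega_{l-1}(SM;\C^n)$ and for $\mu_-+A_-$ on $\Omega_{-(l-1)}(SM;\C^n)$ --- the analog, established in Section~\ref{sec::injectivity of mu+A operators}, of the boundary result Theorem~\ref{th::injectivity of mu+A operators in boundary case} --- which applies because $l-1\ge m$ is in the admissible degree range and one of \eqref{eqn::condition 1}, \eqref{eqn::condition 2} is assumed for all $k\ge 1$. This gives $u_{l-1}=u_{-(l-1)}=0$, contradicting the minimality of $l$ when $l-1>m$; replacing $l$ by $l-1$ and repeating drives the effective top degree down to $m$, so $u_k=0$ for all $|k|\ge m$, as claimed. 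The final conclusions $f=\bfG_E u+Au$ and $h=\Phi u$ then follow by separating the displayed identity into its degree $m$ and degree $m-1$ parts.

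The only routine points are the Fourier bookkeeping in the display and verifying that the degree indices stay in the range where the injectivity theorem is available throughout the iteration. The real obstacle --- and the reason the extra conditions \eqref{eqn::condition 1}--\eqref{eqn::condition 2} are unavoidable on closed surfaces, in contrast with the boundary case --- is the closed-surface injectivity of $\mu_++A_+$ (equivalently $\mu_-+A_-$) on $\Omega_k(SM;\C^n)$ for $k$ in the relevant range: in its absence there are nontrivial unitary connections whose parallel transport along periodic thermostat geodesics is trivial, i.e.\ genuine counterexamples. I expect this to be proved by a weighted Guillemin-Kazhdan/Pestov energy identity built from the commutator formula of Lemma~\ref{lem::commutation formula for mu+A operators} (cf.\ the weighted identity Proposition~\ref{prop::weighted GK energy identity for mu+A operators}), whose positivity, uniformly over the degree $k$, is exactly what \eqref{eqn::condition 1} or \eqref{eqn::condition 2} secures once combined with $K_E\le-\kappa$ and the Gauss-Bonnet identity $\int_M K_E\,d\Vol_g=2\pi\chi(M)$.
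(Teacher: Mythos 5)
Your proposal is correct and follows essentially the same route as the paper: finite degree of $u$ via Theorem~\ref{th::finite degree}, then the same downward induction on Fourier modes as in the boundary case, reduced to the injectivity of $\mu_\pm+A_\pm$ on $\Omega_{\pm k}(SM;\C^n)$, which the paper supplies in Theorem~\ref{th::injectivity of mu+A operators in closed case} under \eqref{eqn::condition 1} and Corollary~\ref{cor::injectivity of mu+A operators for small A} under \eqref{eqn::condition 2}. You even correctly anticipate that these injectivity results rest on the weighted Guillemin--Kazhdan identity of Proposition~\ref{prop::weighted GK energy identity for mu+A operators} combined with Gauss--Bonnet, which is exactly how the paper proves them.
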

\begin{proof}
Theorem~\ref{th::finite degree} allows us to claim that $u$ is of finite degree. Then, parallel to arguments in Theorem~\ref{th::injectivity for tensors}, the proof is reduced to the injectivity of $\mu_{\pm}+A_{\pm}$ on $\Omega_{\pm k}(SM;\C^n)$ for all $\pm k\ge 1$. For unitary $A$, Theorem~\ref{th::injectivity of mu+A operators in closed case} and Corollary~\ref{cor::injectivity of mu+A operators for small A} ensure injectivity of these operators under the hypotheses \eqref{eqn::condition 1} and \eqref{eqn::condition 2}, respectively. The proof is complete.
\end{proof}


\section{Injectivity of $\mu_\pm+A_{\pm}$ operators}\label{sec::injectivity of mu+A operators}

In the present section we prove injectivity results for $\mu_\pm+A_{\pm}$ operators which are one of the crucial components in the proofs of Theorem~\ref{th::main 1} and Theorem~\ref{th::main 3}. 

\subsection{Surfaces with boundary} This case can be reduced to a similar result for geodesic flows obtained in \cite{guillarmou2016negconnections}.

\begin{Theorem}\label{th::injectivity of mu+A operators in boundary case}
Let $(M,g,E)$ be a Gaussian thermostat on a compact oriented surface with boundary and let $A:TM\to \mathfrak{gl}(n, \C)$ be a connection. If $u\in \Omega_{\pm k}(SM;\C^n)$, $k\ge 1$, satisfy $(\mu_{\pm}+A_{\pm})u=0$ and $u|_{\p(SM)}=0$, then $u=0$.
\end{Theorem}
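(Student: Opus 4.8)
The statement concerns the operators $\mu_+ + A_+$ on $\Omega_k(SM;\C^n)$ (and the conjugate $\mu_- + A_-$ on $\Omega_{-k}$) for a Gaussian thermostat, and asserts injectivity on sections vanishing on the boundary. Since $\mu_\pm = \eta_\pm + \lambda_\pm V$ differs from the Guillemin–Kazhdan operator $\eta_\pm$ only by a zeroth-order term $\lambda_\pm V$, and on $\Omega_k$ one has $Vu = iku$, the operator $\mu_+ + A_+$ acting on $\Omega_k$ is nothing but $\eta_+ + (ik\lambda_+ + A_+)$, i.e. the operator $\eta_+$ twisted by the \emph{modified connection} $\widetilde A := A + ik\lambda\,(\text{restricted to the }+\text{ part})$ — more precisely one absorbs the scalar $1$-form contribution $ik\lambda_+$ into the connection term. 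The plan is therefore to interpret $(\mu_+ + A_+)u = 0$, $u\in\Omega_k$, $u|_{\partial(SM)}=0$, as the statement that $u$ is a holomorphic (in the fibre variable) section of the appropriate twisted bundle over $(M,g)$ that vanishes on the boundary, and then quote the corresponding injectivity result for geodesic flows from \cite{guillarmou2016negconnections}. The key observation making this clean is that $\lambda_\pm$ are scalar-valued, so $ik\lambda_+$ genuinely enters as an extra piece of a $\mathfrak{gl}(n,\C)$-connection (a scalar multiple of the identity times a $1$-form), and the hypothesis in \cite{guillarmou2016negconnections} for boundary surfaces is purely topological/dynamical and places no positivity constraint on the connection.

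**Key steps, in order.** First I would recall the decomposition $\lambda = \lambda_- + \lambda_+$ with $\lambda_\pm\in\Omega_{\pm1}(SM;\C)$ and write, for $u\in\Omega_k(SM;\C^n)$,
$$
(\mu_+ + A_+)u = \eta_+ u + (A_+ + ik\,\lambda_+)u,
$$
and analogously $(\mu_- + A_-)u = \eta_- u + (A_- - ik\,\lambda_-)u$ for $u\in\Omega_{-k}$. Second, observe that $A_+ + ik\lambda_+$ is again the $(+1)$-Fourier-component of a (matrix-valued) connection on $M\times\C^n$: indeed $A + ik\lambda\,\mathrm{Id}$ is a connection, being the sum of the connection $A$ and the scalar $1$-form $ik\theta$ times the identity, where $\theta$ is the $1$-form dual to $E$ (so that $\lambda = V\theta$, as used in the proof of Lemma~\ref{lem::commutation between mu operators}). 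Third, apply the known injectivity statement for the twisted Guillemin–Kazhdan operators $\eta_\pm$ associated to the \emph{geodesic} flow of $(M,g)$ on a compact surface with boundary — precisely the result of \cite{guillarmou2016negconnections} — to the connection $A \pm ik\lambda\,\mathrm{Id}$ (sign according to $\pm$), concluding $u = 0$. The boundary condition $u|_{\partial(SM)}=0$ is exactly the hypothesis needed there, and no curvature assumption on $(M,g)$ is required for the boundary case.

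**Main obstacle.** The substantive point is verifying that the reduction is legitimate: namely that the injectivity result for $\eta_\pm$-type operators in \cite{guillarmou2016negconnections} is stated (or immediately extends) for \emph{general} $\mathfrak{gl}(n,\C)$-valued connections on surfaces with boundary, with no positivity or unitarity hypothesis, so that it may be applied to $A \pm ik\lambda\,\mathrm{Id}$ which is neither Hermitian nor small in general. If that reference handles only unitary connections, one would instead need to invoke the argument of \cite{guillarmou2016negconnections} in the form that yields injectivity of $\eta_+$-holomorphic sections vanishing at the boundary on any surface with boundary regardless of curvature — this is the "holomorphic integrating factor / finite-degree reduction to degree zero" mechanism, which is robust to the algebraic nature of the connection because the boundary vanishing kills the obstruction. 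A secondary, purely bookkeeping, concern is getting the signs and Fourier indices right when transferring between the $\mu_\pm$ language (with the $ik\lambda$ term) and the connection language; this is routine given the identities $\lambda_- = -i\theta_{-1}$, $\lambda_+ = i\theta_1$ already recorded in the proof of Lemma~\ref{lem::commutation between mu operators}. Everything else is formal.
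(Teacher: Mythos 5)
Your proposal is essentially identical to the paper's proof: the paper also restricts $\mu_\pm+A_\pm$ to $\Omega_{\pm k}$, uses $\lambda_+=i\theta_1$, $\lambda_-=-i\theta_{-1}$ to rewrite the operators as $\eta_\pm+(A_\pm-k\theta_{\pm1})$, i.e.\ as $\eta_\pm$ twisted by the connection $A^k:=A-k\theta$, and then invokes \cite[Theorem~5.2]{guillarmou2016negconnections}, which indeed covers general $\mathfrak{gl}(n,\C)$-valued connections on surfaces with boundary with no curvature or unitarity hypothesis. The only correction is the bookkeeping you yourself flagged: since $ik\lambda_+=-k\theta_1$ and $-ik\lambda_-=-k\theta_{-1}$, the single modified connection is the real twist $A-k\theta$ (the same for both signs), not $A\pm ik\theta\,\Id$.
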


\begin{proof}
Let $\theta$ be a $1$-form on $M$ dual to $E$. Then $\lambda_+=i\theta_1$ and $\lambda_-=-i\theta_{-1}$. Therefore, the operators $\mu_{+}+A_{+}$ and $\mu_{-}+A_{-}$ restricted to $\Omega_{k}(SM;\C^n)$ and $\Omega_{-k}(SM;\C^n)$, respectively, have the forms
$$
\mu_{+}+A_{+}=\eta_{+}+(A_{+}-k\theta_1)\quad\text{and}\quad \mu_{-}+A_{-}=\eta_{-}+(A_{-}-k\theta_{-1}).
$$
Now, fix $k\ge 1$ and consider $A^k:=A-k\theta$ as a connection. Then, according to \cite[Theorem~5.2]{guillarmou2016negconnections}, any $u\in \Omega_{\pm m}(SM;\C^n)$, $m\ge 1$, such that $(\eta_{\pm}+A^k_{\pm})u=0$ and $u|_{\p(SM)}=0$ must vanish identically. In the case $m=k$, this, in particular, yields the desired injectivity result for $(\mu_{\pm}+A_{\pm})$.
\end{proof}

\subsection{Closed surfaces}
This case is more complicated. We closely follow the arguments of \cite[Section~8]{guillarmou2016negconnections}. We remind the reader that on closed surfaces we always work with unitary connections. First, we need the following weighted analog of Proposition~\ref{prop::GK energy identity for mu operators} for $\mu_{\pm}+A_{\pm}$ operators.

\begin{Proposition}\label{prop::weighted GK energy identity for mu+A operators}
Let $(M,g,E)$ be a Gaussian thermostat on a compact oriented surface and let $A:TM\to \mathfrak u(n)$ be a unitary connection. Suppose that $\varphi\in C^\infty(M;\R)$. Then for any $u\in \Omega_k(SM;\C^n)$, $k\in\Z$, with $u|_{\p(SM)}=0$ in the case $\p M\neq \varnothing$, we have
\begin{multline*}
\|e^{-\varphi}(\mu_{+}+A_{+})(e^{\varphi} u)\|^2\\
=\|e^{\varphi}(\mu_{-}+A_{-})(e^{-\varphi} u)\|^2-\frac{k}{2}(K_E u,u)-\frac{1}{2}((\Delta_g\varphi) u,u)+\frac{i}{2}(\star F_A u,u).
\end{multline*}
\end{Proposition}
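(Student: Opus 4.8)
The plan is to reproduce the computation in the proof of Proposition~\ref{prop::GK energy identity for mu operators}, after absorbing the scalar weight $e^{\pm\varphi}$ into the operators. Since $\varphi$ is pulled back from $M$ it is annihilated by $V$, and $\eta_{\pm}$ satisfy the Leibniz rule against scalar functions; conjugating therefore gives $e^{-\varphi}(\mu_{+}+A_{+})(e^{\varphi}u)=Pu$ and $e^{\varphi}(\mu_{-}+A_{-})(e^{-\varphi}u)=Qu$, where
$$
P:=\mu_{+}+A_{+}+(\eta_{+}\varphi),\qquad Q:=\mu_{-}+A_{-}-(\eta_{-}\varphi),
$$
the terms $\eta_{\pm}\varphi$ acting by scalar multiplication. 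Thus the asserted identity becomes $\|Pu\|^{2}=\|Qu\|^{2}-\frac{k}{2}(K_{E}u,u)-\frac{1}{2}((\Delta_{g}\varphi)u,u)+\frac{i}{2}(\star F_{A}u,u)$, and the point is that $P$ and $Q$ will play exactly the roles that $\mu_{+}$ and $\mu_{-}$ play in Proposition~\ref{prop::GK energy identity for mu operators}.

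Two structural facts make this precise, both valid with no boundary contributions because $u|_{\p(SM)}=0$ in the boundary case. First, on $\Omega_{k}(SM;\C^{n})$ one has the adjoint relations $P^{*}=-Q+i\lambda_{-}$ and $Q^{*}=-P-i\lambda_{+}$: these follow from \eqref{eqn::adjoints of mu operators} for the $\mu_{\pm}$ parts, from the operator identities $A_{+}^{*}=-A_{-}$ and $A_{-}^{*}=-A_{+}$ (which are read off from the skew-Hermitian symmetry of the $\mathfrak u(n)$-valued $A$ by matching the $\Omega_{\pm1}$-components of $A$), and from $(\eta_{+}\varphi)^{*}=\eta_{-}\varphi$ (because $\overline{\eta_{+}\varphi}=\eta_{-}\varphi$, the functions $\varphi$, $X\varphi$, $X_{\perp}\varphi$ being real). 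Second, for $u\in\Omega_{k}(SM;\C^{n})$ one has the commutator formula
$$
[P,Q]u=\frac{i}{2}K_{E}Vu+\frac{i}{2}\star F_{A}u-\frac{1}{2}(\Delta_{g}\varphi)u-i\lambda_{-}Pu-i\lambda_{+}Qu,
$$
which is precisely the shape of Lemma~\ref{lem::commutation formula for mu+A operators} with $\mu_{\pm}+A_{\pm}$ replaced by $P,Q$, together with the extra term $-\frac{1}{2}(\Delta_{g}\varphi)u$. One obtains it by expanding $[P,Q]$ bilinearly: the principal term $[\mu_{+}+A_{+},\mu_{-}+A_{-}]$ is given by Lemma~\ref{lem::commutation formula for mu+A operators}; the cross terms are elementary, e.g. $\mu_{+}((\eta_{-}\varphi)u)-(\eta_{-}\varphi)\mu_{+}u=(\eta_{+}\eta_{-}\varphi)u-i\lambda_{+}(\eta_{-}\varphi)u$ (and the analogous expression with $\mu_{-}$, $\eta_{+}\varphi$); the scalar piece is identified via $[\eta_{+},\eta_{-}]\varphi=\frac{i}{2}KV\varphi=0$ from \eqref{eq::[eta_+,eta_-]} (so $\eta_{+}\eta_{-}\varphi=\eta_{-}\eta_{+}\varphi$) combined with $\eta_{+}\eta_{-}+\eta_{-}\eta_{+}=\frac{1}{2}(X^{2}+X_{\perp}^{2})$ and the identity $X^{2}\varphi+X_{\perp}^{2}\varphi=\Delta_{g}\varphi$ for $\varphi\in C^{\infty}(M)$ (which follows since $X^{2}\varphi(x,v)=\Hess_{x}\varphi(v,v)$ and $X_{\perp}^{2}\varphi(x,v)=\Hess_{x}\varphi(iv,iv)$, whose sum is the trace $\Delta_{g}\varphi$), so that the two cross terms contribute $-\frac{1}{2}(\Delta_{g}\varphi)u$; finally the surviving $\lambda_{\pm}$-terms recombine into $-i\lambda_{-}Pu-i\lambda_{+}Qu$.

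Given these two facts, the identity is derived exactly as in the proof of Proposition~\ref{prop::GK energy identity for mu operators}: write $\|Pu\|^{2}=(P^{*}Pu,u)=-(QPu,u)+(i\lambda_{-}Pu,u)$, substitute $QP=PQ-[P,Q]$, insert the commutator above so that the two $(i\lambda_{-}Pu,u)$ terms cancel, and then use $\|Qu\|^{2}=(Q^{*}Qu,u)=-(PQu,u)-(i\lambda_{+}Qu,u)$ to absorb what is left. This gives $\|Pu\|^{2}=\|Qu\|^{2}+\frac{i}{2}(K_{E}Vu,u)+\frac{i}{2}(\star F_{A}u,u)-\frac{1}{2}((\Delta_{g}\varphi)u,u)$, and since $Vu=iku$ we have $\frac{i}{2}(K_{E}Vu,u)=-\frac{k}{2}(K_{E}u,u)$, which is the claim. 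Setting $A=0$ and $\varphi=0$ recovers Proposition~\ref{prop::GK energy identity for mu operators}, as announced in the Remark following it.

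The one genuinely delicate part is the bookkeeping in the middle step: pinning down the signs in $A_{\pm}^{*}=-A_{\mp}$ and $(\eta_{\pm}\varphi)^{*}=\eta_{\mp}\varphi$, and then verifying that every first-order contribution ($\lambda_{\pm}$, $A_{\pm}$, $\eta_{\pm}\varphi$) cancels while the signs on the three curvature-type terms $K_{E}$, $\star F_{A}$, $\Delta_{g}\varphi$ come out as stated. Conceptually, however, once $P^{*}$, $Q^{*}$ and $[P,Q]$ are cast in the same structural form as $\mu_{+}^{*}$, $\mu_{-}^{*}$ and $[\mu_{+},\mu_{-}]$, no idea beyond what is already in the proof of Proposition~\ref{prop::GK energy identity for mu operators} is needed.
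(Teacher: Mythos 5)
Your proposal is correct and follows essentially the same route as the paper: conjugate by $e^{\pm\varphi}$ to get $P=\mu_++A_++(\eta_+\varphi)$ and $Q=\mu_-+A_--(\eta_-\varphi)$, compute $P^*$ and $Q^*$ using \eqref{eqn::adjoints of mu operators} together with $A_\pm^*=-A_\mp$ and $(\eta_\pm\varphi)^*=\eta_\mp\varphi$, and reduce everything to the identity $\eta_+\eta_-\varphi+\eta_-\eta_+\varphi=\tfrac12\Delta_g\varphi$ plus Lemma~\ref{lem::commutation formula for mu+A operators}. The only cosmetic difference is that you isolate the commutator $[P,Q]$ as an intermediate step (mirroring the proof of Proposition~\ref{prop::GK energy identity for mu operators}), whereas the paper expands $P_\varphi^*P_\varphi u-Q_\varphi^*Q_\varphi u$ directly; the computations are identical in substance.
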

\begin{proof}
Define the operators
\begin{align*}
P_\varphi&=e^{-\varphi}\circ(\mu_{+}+A_{+})\circ e^{\varphi}=(\mu_{+}+A_{+})+(\eta_{+}\varphi),\\
Q_\varphi&=e^{\varphi}\circ (\mu_{-}+A_{-})\circ e^{-\varphi}=(\mu_{-}+A_{-})-(\eta_{-}\varphi).
\end{align*}
It suffices to prove
\begin{equation}\label{eqn::P*P-Q*Q}
P_\varphi^*P_\varphi u-Q_\varphi^*Q_\varphi u=-\frac{k}{2}K_E u-\frac{1}{2}(\Delta_g \varphi)u+\frac{i}{2}\star F_A u,\qquad u\in \Omega_k(SM;\C^n).
\end{equation}
Using \eqref{eqn::adjoints of mu operators} and the fact that $A$ is unitary, it is easy to check that
$$
P_\varphi^*=-(\mu_{-}+A_{-})+i\lambda_{-}+(\eta_{-}\varphi),\quad Q_\varphi^*=-(\mu_{+}+A_{+})-i\lambda_{+}-(\eta_{+}\varphi).
$$
Then for $u\in \Omega_k(SM;\C^n)$,
\begin{align*}
P_\varphi^*&P_\varphi u=(-(\mu_{-}+A_{-})+i\lambda_{-}+(\eta_{-}\varphi))((\mu_{+}+A_{+})+(\eta_{+}\varphi))u\\
&=-(\mu_{-}+A_{-})(\mu_{+}+A_{+})u-(\eta_{-}\eta_{+}\varphi)u-i\lambda_{-}(\eta_{+}\varphi)u-(\eta_{+}\varphi)(\mu_{-}+A_{-})u\\
&\quad+i\lambda_{-}(\mu_{+}+A_{+})u+i\lambda_{-}(\eta_{+}\varphi)u+(\eta_{-}\varphi)(\mu_{+}+A_{+})u+(\eta_{-}\varphi)(\eta_{+}\varphi)u\\
&=-(\mu_{-}+A_{-})(\mu_{+}+A_{+})u-(\eta_{-}\eta_{+}\varphi)u-(\eta_{+}\varphi)(\mu_{-}+A_{-})u+i\lambda_{-}(\mu_{+}+A_{+})u\\
&\quad+(\eta_{-}\varphi)(\mu_{+}+A_{+})u+(\eta_{-}\varphi)(\eta_{+}\varphi)u
\end{align*}
and
\begin{align*}
Q_\varphi^*&Q_\varphi u=(-(\mu_{+}+A_{+})-i\lambda_{+}-(\eta_{+}\varphi))((\mu_{-}+A_{-})-(\eta_{-}\varphi))u\\
&=-(\mu_{+}+A_{+})(\mu_{-}+A_{-})u+(\eta_{+}\eta_{-}\varphi)u-i\lambda_{+}(\eta_{-}\varphi)u+(\eta_{-}\varphi)(\mu_{+}+A_{+})u\\
&\quad-i\lambda_{+}(\mu_{-}+A_{-})u+i\lambda_{+}(\eta_{-}\varphi)u-(\eta_{+}\varphi)(\mu_{-}+A_{-})u+(\eta_{+}\varphi)(\eta_{-}\varphi)u\\
&=-(\mu_{+}+A_{+})(\mu_{-}+A_{-})u+(\eta_{+}\eta_{-}\varphi)u+(\eta_{-}\varphi)(\mu_{+}+A_{+})u-i\lambda_{+}(\mu_{-}+A_{-})u\\
&\quad-(\eta_{+}\varphi)(\mu_{-}+A_{-})u+(\eta_{+}\varphi)(\eta_{-}\varphi)u.
\end{align*}
Therefore,
\begin{multline*}
P_\varphi^*P_\varphi u-Q_\varphi^*Q_\varphi u=[\mu_{+}+A_{+},\mu_{-}+A_{-}]u-(\eta_{-}\eta_{+}\varphi+\eta_{+}\eta_{-}\varphi)u\\
+i\lambda_{-}(\mu_{+}+A_{+})u+i\lambda_{+}(\mu_{-}+A_{-})u.
\end{multline*}
This gives \eqref{eqn::P*P-Q*Q} by applying Lemma~\ref{lem::commutation formula for mu+A operators} and the fact $\eta_{-}\eta_{+}\varphi+\eta_{+}\eta_{-}\varphi=\frac{1}{2}\Delta_g\varphi$.
\end{proof}

As an immediate consequence of Proposition~\ref{prop::weighted GK energy identity for mu+A operators}, we obtain our first result on injectivity of $\mu_{\pm}+A_{\pm}$ on closed surfaces.

\begin{Corollary}\label{cor::injectivity of mu+A operators for small A}
Let $(M,g,E)$ be a Gaussian thermostat on a closed oriented surface with $K_E\le-\kappa$ for some constant $\kappa>0$  and let $A:TM\to \mathfrak u(n)$ be a unitary connection. Suppose that for all $k\ge 1$,
$$
k>\frac{\|i\star F_A\|_{L^\infty(M)}}{\kappa}.
$$
Then $\mu_{\pm}+A_{\pm}$ is injective on $\Omega_{\pm k}(SM;\C^n)$ for all $k\ge 1$.
\end{Corollary}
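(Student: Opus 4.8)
The plan is to apply Proposition~\ref{prop::weighted GK energy identity for mu+A operators} with the trivial weight $\varphi\equiv 0$, which kills the Laplacian term and leaves
$$
\|(\mu_{+}+A_{+})u\|^2=\|(\mu_{-}+A_{-})u\|^2-\frac{k}{2}(K_E u,u)+\frac{i}{2}(\star F_A u,u)
$$
for every $u\in\Omega_k(SM;\C^n)$ and every $k\in\Z$. From this point the argument is purely algebraic: I will feed in the hypotheses $K_E\le-\kappa$ and the bound on $\star F_A$ to convert this identity into a coercive estimate, treating the two cases $u\in\Omega_k$, $k\ge 1$ (assuming $(\mu_{+}+A_{+})u=0$) and $u\in\Omega_{-k}$, $k\ge 1$ (assuming $(\mu_{-}+A_{-})u=0$) separately because the sign of the curvature term flips.

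First consider $u\in\Omega_k(SM;\C^n)$ with $k\ge 1$ and $(\mu_{+}+A_{+})u=0$. The displayed identity then yields $0\ge-\frac{k}{2}(K_E u,u)+\frac{i}{2}(\star F_A u,u)$. Since $K_E\le-\kappa$ we have $-\frac{k}{2}(K_E u,u)\ge\frac{k\kappa}{2}\|u\|^2$. For the curvature term I use that $A$ is unitary, so that $i\star F_A(x)$ is Hermitian at each $x$; hence the integrand $\langle i\star F_A(x)u(x,v),u(x,v)\rangle_{\C^n}$ is real and bounded below by $-\|i\star F_A\|_{L^\infty(M)}|u(x,v)|^2$, and integrating over $SM$ gives $\frac{i}{2}(\star F_A u,u)\ge-\frac{1}{2}\|i\star F_A\|_{L^\infty(M)}\|u\|^2$. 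Combining the two bounds, $0\ge\frac12\big(k\kappa-\|i\star F_A\|_{L^\infty(M)}\big)\|u\|^2$, and the hypothesis $k\kappa>\|i\star F_A\|_{L^\infty(M)}$ forces $u=0$.

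For $u\in\Omega_{-k}(SM;\C^n)$, $k\ge 1$, with $(\mu_{-}+A_{-})u=0$, I would apply the same identity with index $-k$ in place of $k$ and solve for $\|(\mu_{-}+A_{-})u\|^2$, which gives $0=\|(\mu_{+}+A_{+})u\|^2-\frac{k}{2}(K_E u,u)-\frac{i}{2}(\star F_A u,u)$, whence $0\ge-\frac{k}{2}(K_E u,u)-\frac{i}{2}(\star F_A u,u)$. Exactly as before, $-\frac{k}{2}(K_E u,u)\ge\frac{k\kappa}{2}\|u\|^2$ and $-\frac{i}{2}(\star F_A u,u)\ge-\frac12\|i\star F_A\|_{L^\infty(M)}\|u\|^2$, so $0\ge\frac12\big(k\kappa-\|i\star F_A\|_{L^\infty(M)}\big)\|u\|^2$ and $u=0$. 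Since $k\ge 1$ was arbitrary, this establishes injectivity of $\mu_{\pm}+A_{\pm}$ on $\Omega_{\pm k}(SM;\C^n)$ for all $k\ge 1$.

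I do not anticipate a real obstacle here; the only points requiring care are the pointwise estimate of the indefinite quadratic form $\frac{i}{2}(\star F_A u,u)$ by $\|i\star F_A\|_{L^\infty(M)}\|u\|^2$, which genuinely uses unitarity of $A$ (so that $i\star F_A$ is Hermitian and its operator norm at each point is $\max(|\lambda_{\min}|,|\lambda_{\max}|)\le\|i\star F_A\|_{L^\infty(M)}$), and keeping track of the sign of the term $\mp\frac{k}{2}(K_E u,u)$ when passing between $\Omega_k$ and $\Omega_{-k}$.
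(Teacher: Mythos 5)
Your proof is correct and is precisely the argument the paper intends: the Corollary is stated there as an immediate consequence of Proposition~\ref{prop::weighted GK energy identity for mu+A operators}, obtained by taking $\varphi\equiv 0$ and estimating the curvature terms exactly as you do, with the sign of $\mp\frac{k}{2}(K_E u,u)$ handled correctly in both the $\Omega_k$ and $\Omega_{-k}$ cases. No gaps.
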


Now, we are ready to state our second result on injectivity of $\mu_{\pm}+A_{\pm}$ on closed surfaces which is an analog of \cite[Theorem~8.1]{guillarmou2016negconnections}.

\begin{Theorem}\label{th::injectivity of mu+A operators in closed case}
Let $(M,g,E)$ be a Gaussian thermostat on a closed oriented surface and let $A:TM\to \mathfrak u(n)$ be a unitary connection. Denote by $\lambda_{\min}$ and $\lambda_{\max}$ the smallest and largest, respectively, eigenvalues of $i\star F_A$.
\begin{itemize}
\item[(a)] If $k\in \Z$ and
$$
\int_M\lambda_{\min}\,d\Vol_g>2\pi k\chi(M),
$$
then any $u\in \Omega_k(SM;\C^n)$ satisfying $(\mu_{+}+A_{+})u=0$ vanishes identically.

\item[(b)] If $k\in \Z$ and
$$
\int_M\lambda_{\max}\,d\Vol_g<-2\pi k\chi(M),
$$
then any $u\in \Omega_{-k}(SM;\C^n)$ satisfying $(\mu_{-}+A_{-})u=0$ vanishes identically.
\end{itemize}
\end{Theorem}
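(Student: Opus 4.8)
\emph{Proof proposal.} I would prove (a) in detail and obtain (b) by the symmetric argument. Suppose $u\in\Omega_k(SM;\C^n)$ satisfies $(\mu_++A_+)u=0$. For a real-valued $\varphi\in C^\infty(M;\R)$ to be chosen, put $\tilde u:=e^{-\varphi}u$. Since $\varphi$ is a function of the base point and $e^{-\varphi}$ commutes with $V$, we have $\tilde u\in\Omega_k(SM;\C^n)$ and $e^{\varphi}\tilde u=u$, so $(\mu_++A_+)(e^{\varphi}\tilde u)=0$; hence the left-hand side of Proposition~\ref{prop::weighted GK energy identity for mu+A operators} applied to $\tilde u$ vanishes, leaving
$$
\|e^{\varphi}(\mu_-+A_-)(e^{-\varphi}\tilde u)\|^2=\frac{k}{2}(K_E\tilde u,\tilde u)+\frac{1}{2}((\Delta_g\varphi)\tilde u,\tilde u)-\frac{i}{2}(\star F_A\tilde u,\tilde u).
$$
Because $\star F_A(x)\in\mathfrak u(n)$, the matrix $i\star F_A(x)$ is Hermitian with spectrum in $[\lambda_{\min}(x),\lambda_{\max}(x)]$, so $-\tfrac{i}{2}(\star F_A\tilde u,\tilde u)=-\tfrac12\int_{SM}\langle i\star F_A(x)\tilde u,\tilde u\rangle_{\C^n}\,d\Sigma^3\le-\tfrac12\int_{SM}\lambda_{\min}(x)|\tilde u|^2\,d\Sigma^3$. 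Therefore
$$
\|e^{\varphi}(\mu_-+A_-)(e^{-\varphi}\tilde u)\|^2\le\frac{1}{2}\int_{SM}\big(kK_E+\Delta_g\varphi-\lambda_{\min}\big)|\tilde u|^2\,d\Sigma^3.
$$

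The crux is to choose $\varphi$ so that $kK_E+\Delta_g\varphi-\lambda_{\min}$ is a \emph{strictly negative constant}. This is the Poisson equation $\Delta_g\varphi=\lambda_{\min}-kK_E+c$ on the closed surface $M$, solvable precisely when the right-hand side has vanishing mean, i.e. $c\,\Vol(M)=k\int_M K_E\,d\Vol_g-\int_M\lambda_{\min}\,d\Vol_g$. By the Gauss--Bonnet theorem and $\int_M\div_g E\,d\Vol_g=0$ one has $\int_M K_E\,d\Vol_g=2\pi\chi(M)$, so $c\,\Vol(M)=2\pi k\chi(M)-\int_M\lambda_{\min}\,d\Vol_g$, which is negative exactly under hypothesis (a). (Since $\lambda_{\min}$ is only Lipschitz, one first replaces it by a slightly smaller smooth function and corrects the mean by a constant; this only strengthens the inequality.) With such a $\varphi$ the displayed right-hand side is $\le\frac{c}{2}\|\tilde u\|^2\le0$ while the left-hand side is $\ge0$, forcing $\tilde u\equiv0$ and hence $u=e^{\varphi}\tilde u=0$.

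For (b) the argument is mirror-symmetric: if $(\mu_-+A_-)u=0$ with $u\in\Omega_{-k}(SM;\C^n)$, set $\tilde u:=e^{\varphi}u$, so that the term $\|e^{\varphi}(\mu_-+A_-)(e^{-\varphi}\tilde u)\|^2$ in Proposition~\ref{prop::weighted GK energy identity for mu+A operators} (now applied with index $-k$) vanishes; rearranging and using the dual pointwise bound $\frac{i}{2}(\star F_A\tilde u,\tilde u)\le\frac12\int_{SM}\lambda_{\max}|\tilde u|^2\,d\Sigma^3$ gives $\|e^{-\varphi}(\mu_++A_+)(e^{\varphi}\tilde u)\|^2\le\frac12\int_{SM}(kK_E-\Delta_g\varphi+\lambda_{\max})|\tilde u|^2\,d\Sigma^3$. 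Choosing $\varphi$ with $\Delta_g\varphi=kK_E+\lambda_{\max}-c'$, whose solvability forces $c'\,\Vol(M)=2\pi k\chi(M)+\int_M\lambda_{\max}\,d\Vol_g<0$ under hypothesis (b), makes the right-hand side $\le\frac{c'}{2}\|\tilde u\|^2\le0$ and concludes $\tilde u=0$ as before.

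The only genuine subtlety is designing the weight $\varphi$: one must recognize that the weighted energy identity collapses all curvature contributions into the single combination $kK_E+\Delta_g\varphi-\lambda_{\min}$ (resp.\ $kK_E-\Delta_g\varphi+\lambda_{\max}$), that the range of $\Delta_g$ on a closed surface is exactly the mean-zero functions, and that the resulting solvability condition coincides — via Gauss--Bonnet — with the stated integral hypotheses. The Lipschitz (rather than smooth) regularity of $\lambda_{\min},\lambda_{\max}$ is a minor point handled by a smoothing step that preserves the inequalities. Note that no curvature sign assumption on $K_E$ is needed for this statement, as the integral conditions do all the work.
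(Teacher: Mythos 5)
Your argument is correct and follows essentially the same route as the paper: both rest on the weighted energy identity of Proposition~\ref{prop::weighted GK energy identity for mu+A operators}, choose the weight $\varphi$ by solving a Poisson equation whose solvability condition reduces, via Gauss--Bonnet and $\int_M\div_g E\,d\Vol_g=0$, to the stated integral hypotheses, and handle the Lipschitz regularity of $\lambda_{\min},\lambda_{\max}$ by a smoothing step. The only cosmetic difference is that the paper packages the conclusion as a coercivity estimate $C\|w\|\le\|e^{-\varphi}(\mu_++A_+)(e^{\varphi}w)\|$ under the pointwise condition $-kK_E-\Delta_g\varphi+i\star F_A\ge 2C\,\Id$, whereas you set the left side of the identity to zero and bound the right side by $\tfrac{c}{2}\|\tilde u\|^2$ with $c<0$; these are equivalent rearrangements.
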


\begin{proof}
We prove (a). Suppose that there is $\varphi\in C^\infty(M;\R)$ and some constant $C>0$ such that
\begin{equation}\label{ineq::condition for Carleman estimate for mu operators}
-kK_E -(\Delta_g \varphi)+i\star F_A\ge 2C\Id\quad\text{on}\quad M
\end{equation}
as positive definite endomorphisms. Then Proposition~\ref{prop::weighted GK energy identity for mu+A operators} below yields
$$
C\|w\|\le \|e^{-\varphi}(\mu_{+}+A_{+})(e^{\varphi} w)\|,\quad w\in\Omega_k(SM;\C^n).
$$
If $(\mu_{+}+A_{+})u=0$, taking $w=e^{-\varphi}u$ in this estimate will give $u=0$ as desired.

To find $\varphi$ satisfying \eqref{ineq::condition for Carleman estimate for mu operators}, we need $f\in C^\infty(M;\R)$ such that
\begin{equation}\label{ineq::conditions for f}
f+\lambda_1>0\quad\text{and}\quad \int_M f\,d\Vol_g=-2\pi k\chi(M).
\end{equation}
Such $f$, by the Gauss-Bonnet theorem, satisfies
$$
\int_M [kK_E+f]\,d\Vol_g=\int_M [kK+f]\,d\Vol_g=0.
$$
Therefore, there exists $\varphi\in C^\infty(M;\R)$ solving the equation $-\Delta_g\varphi=kK_E+f$ in $M$. Using this, one can see that
$$
-kK_E -(\Delta_g \varphi)+i\star F_A=f+i\star F_A\ge f+\lambda_1\ge 2C>0
$$
as required. 

Hence, we need to find $f$ satisfying \eqref{ineq::conditions for f}. To that end, define
$$
\varepsilon:=\frac{1}{\Vol_g(M)}\Big[\int_M \lambda_1\,d\Vol_g-2\pi k\chi(M)\Big]>0.
$$
Since $\lambda_1\in\C(M;\R)$ there is $h\in C^\infty(M;\R)$ such that $\|h-\lambda_1\|\le \varepsilon/4$. Define $f:=-h+\varepsilon_0$, where $\varepsilon_0$ is the constant determined by
$$
\int_M f\,d\Vol_g=-2\pi k\chi(M).
$$
Then $f+\lambda_1=\varepsilon_0-[h-\lambda_1]\ge \varepsilon_0-\varepsilon/4$. However,
\begin{align*}
\varepsilon_0&=\frac{1}{\Vol_g(M)}\int_M [f+h]\,d\Vol_g=\frac{1}{\Vol_g(M)}\Big[\int_M h\,d\Vol_g-2\pi k\chi(M)\Big]\\
&=\varepsilon+\frac{1}{\Vol_g(M)}\int_M [h-\lambda_1]\,d\Vol_g\ge \varepsilon-\frac{\varepsilon}{4}=\frac{3\varepsilon}{4}.
\end{align*}
Hence, we finally get $f+\lambda\ge \varepsilon/2$. The proof of (b) is analogous.
\end{proof}


\section{Injectivity results for the nonlinear problems}\label{sec::nonlinear problems}

In the present section we prove injectivity results for the nonlinear problems, namely Theorem~\ref{th::main 2} and Theorem~\ref{th::main 4}.

\subsection{Surfaces with boundary} 
We use a {\it pseudolinearization} to reduce the nonlinear problem to Theorem~\ref{th::main 1}. This argument was used in earlier works \cite{guillarmou2016negconnections,paternain2018carleman,paternain2012attenuated,paternain2016geodesic,zhou2017generic}.

\begin{proof}[Proof of Theorem~\ref{th::main 2}]
Let $U_{A,\Phi}:SM\to GL(n,\C)$, $U_{B,\Psi}:SM\to GL(n,\C)$ be the unique solutions for the problems
\begin{align*}
(\bfG_E+A+\Phi)U_{A,\Phi}&=0\quad\text{in}\quad SM,\quad U_{A,\Phi}|_{\p_+ SM}=\Id,\\
(\bfG_E+B+\Psi)U_{B,\Psi}&=0\quad\text{in}\quad SM,\quad U_{B,\Psi}|_{\p_+ SM}=\Id,
\end{align*}
respectively. Then $C_{A,\Phi}=U_{A,\Phi}|_{\p_- SM}$ and $C_{B,\Psi}=U_{B,\Psi}|_{\p_- SM}$. It is not difficult to show that $Q:=U_{A,\Phi} U_{B,\Psi}^{-1}: SM\to GL(n,\C)$ satisfies
$$
\bfG_EQ+AQ-QB+\Phi Q-Q\Psi=0\quad\text{in}\quad SM,\quad Q|_{\p(SM)}=\Id.
$$
Set $U:=Q-\Id$. Then $U:SM\to\C^{n\times n}$ solves
$$
\bfG_EU+AU-UB+\Phi U-U\Psi=-(A-B+\Phi-\Psi)\quad\text{in}\quad SM,\quad U|_{\p(SM)}=0.
$$
Let $\mathbf A:TM\to\mathfrak{gl}(n\times n,\C)$ be a connection on the trivial bundle $M\times \C^{n\times n}$ defined as $\mathbf AU:=AU-UB$, and let $\mathbf\Phi:M\to\mathfrak{gl}(n\times n,\C)$ be a Higgs field defined as $\mathbf\Phi U:=\Phi U-U\Psi$. Then we can rewrite the above transport equation as
$$
\bfG_EU+\mathbf AU+\mathbf\Phi U=-(A-B+\Phi-\Psi)\quad\text{in}\quad SM,\quad U|_{\p(SM)}=0.
$$
In other words, $I_{\mathbf A,\mathbf\Phi}[A-B,\Phi-\Psi]=0$. Then by Theorem~\ref{th::main 1}, there is $p\in C^\infty(M;\C^{n\times n})$ with $p|_{\p M}=0$ such that
$$
A-B=d_{\mathbf A}p=dp+Ap-pB,\quad \Phi-\Psi=\mathbf \Phi p=\Phi p-p\Psi.
$$
Uniqueness for solutions of transport equations imply that $U=-p$. Setting back $Q:=U+\Id$, we finish the proof.
\end{proof}

\subsection{Closed surfaces}
Let $(M,g,E)$ be a Gaussian thermostat on a closed oriented surface $M$ with $K_E<0$. Let $(A,\Phi)$ be a pair of a unitary connection and a skew-Hermitian Higgs field and let $C_{A,\Phi}:SM\times\R\to U(n)$ be the corresponding cocycle.

\begin{Definition}{\rm
The pair $(A,\Phi)$ will be called \emph{cohomologically trivial} if there is $u\in C^\infty(SM;U(n))$ such that
$$
C_{A,\Phi}(x,v;t)=u(\phi_t(x,v))u(x,v)^{-1},
$$
In this case, $u$ is referred to as a \emph{trivializing function}.
}\end{Definition}

Clearly, cohomologically trivial pairs are transparent. The opposite does not need to be true in general. However, in our setting, transparent pairs are always cohomologically trivial. Indeed, as mentioned before, the assumption $K_E<0$ guarantees that the thermostat flow $\phi_t$ is Anosov and transitive. Then cohomologically triviality of transparent pairs follows from the Livsic theorem for $U(n)$-cocycles \cite{livvsic1971certain,livvsic1972cohomology} combined with regularity results in \cite{nicticua1998regularity}.

Now we are in position to prove Theorem~\ref{th::main 4}.

\begin{proof}[Proof of Theorem~\ref{th::main 4}]
According to the above discussion, there is a trivializing function $u\in C^\infty(SM;U(n))$. It is easy to see that $u$ satisfies
\begin{equation}\label{eqn::transport equation for trivializing functions}
(\bfG_E+A+\Phi)u=0\quad\text{in}\quad SM.
\end{equation}
By Theorem~\ref{th::main 3}, we deduce that $u\in C^\infty(M;U(n))$. Then \eqref{eqn::transport equation for trivializing functions} becomes $du+Au+\Phi u=0$, which can be separated as $du+Au=0$ and $\Phi u=0$. Setting $Q:=u\in C^\infty(M;U(n))$, this gives $Q^{-1}(d+A)Q=0$ and $\Phi=0$.
\end{proof}


\appendix
\section{Gaussian thermostats on manifolds with boundary}
Throughout this section, $(M,g,E)$ is a non-trapping Gaussian thermostat on a compact manifold with boundary.

\subsection{Convexity}\label{sec::convexity}
Consider a manifold $M_1$ whose interior contains $M$. We extend the metric $g$ and the external field $E$ to $M_1$ smoothly. We use the same notations for extensions. We say that $(M,g,E)$ is \emph{convex} at $x\in\p M$ if $x$ has a neighborhood $U\subset M_1$ such that all thermostat geodesics in $U$, passing through $x$ and tangent to $\p M$ at $x$, lie in $M_1\setminus M^{\rm int}$. Furthermore, $(M,g,E)$ is said to be \emph{strictly convex} at $x$ if these thermostat geodesics do not intersect $M$ except at $x$.

\begin{Lemma}
The following holds if $(M,g,E)$ is convex at $x\in\p M$
\begin{equation}\label{ineq::convexity at x}
\Lambda(x,v)\ge\<E(x),\nu(x)\>_{g(x)}\quad\text{for all}\quad v\in S_x(\p M).
\end{equation}
Moreover, if this inequality is strict then $(M,g,E)$ is strictly convex at $x$.
\end{Lemma}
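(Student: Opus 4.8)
The plan is to test each thermostat geodesic tangent to $\p M$ at $x$ against a boundary defining function, in the spirit of the proof of Lemma~\ref{lem::tau is smooth on influx bundle}. First I would fix a smooth function $\rho$ defined near $\p M$ in $M_1$ with $\rho>0$ on $M^{\mathrm{int}}$, $\rho=0$ on $\p M$, $\rho<0$ off $M$, and with $|\nabla\rho|_g\equiv1$ and $\nabla\rho=\nu$ along $\p M$ (for instance a signed boundary distance). Given $v\in S_x(\p M)$, let $\gamma=\gamma_{x,v}$ be the thermostat geodesic with $\gamma(0)=x$ and $\dot\gamma(0)=v$, and set $h(t):=\rho(\gamma(t))$. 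Then $h(0)=0$, and since $v$ is tangent to $\p M$ we get $h'(0)=\langle\nabla\rho(x),v\rangle_{g(x)}=\langle\nu(x),v\rangle_{g(x)}=0$.

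Next I would compute $h''(0)$ using the thermostat equation~\eqref{eqn::Gaussian thermostat}. Since $|v|_g=1$, we have $D_t\dot\gamma(0)=E(x)-\langle E(x),v\rangle_{g(x)}\,v$, so by the chain rule
\begin{equation*}
h''(0)=\Hess_x\rho(v,v)+\langle\nabla\rho(x),D_t\dot\gamma(0)\rangle_{g(x)}=\Hess_x\rho(v,v)+\langle E(x),\nu(x)\rangle_{g(x)},
\end{equation*}
the cross term $\langle E(x),v\rangle_{g(x)}\langle\nu(x),v\rangle_{g(x)}$ dropping out because $v$ is tangent to $\p M$. Invoking the identity $\Hess_x\rho(v,v)=-\Lambda(x,v)$ for $v\in S_x(\p M)$ (as used in the proof of Lemma~\ref{lem::tau is smooth on influx bundle}) gives $h''(0)=-\Lambda(x,v)+\langle E(x),\nu(x)\rangle_{g(x)}$.

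Now I would bring in the convexity hypothesis. By definition there is a neighborhood $U$ of $x$ in $M_1$ such that the tangent thermostat geodesics through $x$ lie in $M_1\setminus M^{\mathrm{int}}$ while they stay in $U$; hence there is $\varepsilon>0$ with $\gamma(t)\in U\cap(M_1\setminus M^{\mathrm{int}})$, and therefore $h(t)\le0$, for $|t|<\varepsilon$. Since $h(0)=h'(0)=0$, the smooth function $h$ has a local maximum at $t=0$, so $h''(0)\le0$, which is precisely $\Lambda(x,v)\ge\langle E(x),\nu(x)\rangle_{g(x)}$. This proves~\eqref{ineq::convexity at x}.

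For the strict statement, suppose $\Lambda(x,v)>\langle E(x),\nu(x)\rangle_{g(x)}$ for every $v\in S_x(\p M)$, so that $h''(0)<0$ for each such $v$. By compactness of $S_x(\p M)$, continuous dependence on initial data, and a Taylor expansion $h(t)=\tfrac12 h''(0)t^2+O(t^3)$ whose remainder is uniform in $v$, there is a single $\varepsilon>0$ with $\gamma_{x,v}(t)\in U$ and $h(t)<0$ for all $v\in S_x(\p M)$ and $0<|t|<\varepsilon$. Shrinking $U$ so that $M\cap U=\{\rho\ge0\}\cap U$, this forces $\gamma_{x,v}(t)\notin M$ for $0<|t|<\varepsilon$; that is, the thermostat geodesics through $x$ tangent to $\p M$ meet $M$ only at $x$, so $(M,g,E)$ is strictly convex at $x$. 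No serious analytic difficulty arises here: the only delicate points are keeping the sign conventions for $\rho$, $\nu$, $\Lambda$ and $\Hess\rho$ consistent, and ensuring that the time bound $\varepsilon$ can be taken uniform over the compact set of tangent directions in the strict case.
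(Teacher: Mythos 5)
Your proposal is correct and follows essentially the same route as the paper's proof: a boundary defining function $\rho$, the computation of the first and second derivatives of $t\mapsto\rho(\gamma_{x,v}(t))$ via the thermostat equation, the identity $\Hess_x\rho(v,v)=-\Lambda(x,v)$, and the local-maximum argument, with a uniform quadratic bound over $S_x(\p M)$ for the strict case. The only difference is cosmetic (you make the sign convention for $\rho$ and the uniformity in $v$ slightly more explicit than the paper does).
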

\begin{proof}
For sufficiently small $U$, take $\rho\in C^\infty(U;\R)$ such that $|\nabla \rho|_g\equiv 1$ and $\p M\cap U=\rho^{-1}(0)$. Then the convexity of $(M,g,E)$ at $x$ implies that $\rho(\gamma_{x,v}(t))\le 0$ for $v\in S_x(\p M)$ and small enough $t\in\R$. Since $\rho(\gamma_{x,v}(t))$ (which is considered as a function of $t$) attains its maximum value at $t=0$,
$$
\frac{d \rho(\gamma_{x,v}(t))}{dt}\Big|_{t=0}=0 \quad\text{and}\quad \frac{d^2 \rho(\gamma_{x,v}(t))}{dt^2}\Big|_{t=0}\le 0.
$$
As in the proof of Lemma~\ref{lem::tau is smooth on influx bundle}, it is straightforward to check that
\begin{align*}
\frac{d \rho(\gamma_{x,v}(t))}{dt}\Big|_{t=0}&=\<\nu(x),v\>_{g(x)},\\
\frac{d^2 \rho(\gamma_{x,v}(t))}{dt^2}\Big|_{t=0}&=\Hess_x\rho(v,v)+\<\nu(x),E(x)\>_{g(x)}.
\end{align*}
Since $x\in\p M$ and $v\in S_x(\p M)$, we always have $\<\nu(x),v\>_{g(x)}=0$ and $\Hess_x\rho(v,v)=-\Lambda(x,v)$. Hence, we get \eqref{ineq::convexity at x}.

Now, suppose that the inequality \eqref{ineq::convexity at x} is strict. Then there is $\delta>0$ such that for every $v\in S_x M$,
$$
\frac{d^2 \rho(\gamma_{x,v}(t))}{dt^2}\Big|_{t=0}\le-\delta.
$$
Therefore, there is a sufficiently small $\varepsilon>0$ such that $\rho(\gamma_{x,v}(t))\le-\delta t^2/4$ for all $t\in(-\varepsilon,\varepsilon)$. This implies the strict convexity at $x$.
\end{proof}

\subsection{Scattering relation and folds}\label{sec::folds}
In this section we prove the following result.

\begin{Theorem}\label{th::characterization of smooth functions constant along the flow}
Let $(M,g,E)$ be a non-trapping and strictly convex Gaussian thermostat on a compact manifold with boundary. We have the following characterization:
$$
C^\infty_\alpha(\p_+SM;\C^n):=\{w\in C^\infty(\p_+SM;\C^n):\mathcal Aw\in C^\infty(\p(SM);\C^n)\}.
$$
\end{Theorem}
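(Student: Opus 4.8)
The plan is to prove the two inclusions. The inclusion $\subseteq$ is immediate: if $w_\psi\in C^\infty(SM;\C^n)$ then $\mathcal{A}w=w_\psi|_{\p(SM)}\in C^\infty(\p(SM);\C^n)$. The content is the converse, so assume $w\in C^\infty(\p_+SM;\C^n)$ with $\mathcal{A}w\in C^\infty(\p(SM);\C^n)$ and let us show $w_\psi\in C^\infty(SM;\C^n)$, which is a local question on $SM$. Recall $w_\psi(x,v)=w(E_+(x,v))$, where $E_+(x,v)=\phi_{-\tau_-(x,v)}(x,v)\in\p_+SM$ is the entry point of the orbit through $(x,v)$ and $\tau_-(x,v)=\tau(x,-v)$ is its backward exit time. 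Since $\tau$ is continuous on $SM$ and smooth on $SM\setminus S(\p M)$, and the flip $(x,v)\mapsto(x,-v)$ is a diffeomorphism preserving $S(\p M)$, the function $\tau_-$ is smooth on $SM\setminus S(\p M)$; hence so is $E_+$, and therefore $w_\psi=w\circ E_+$ is smooth on $SM\setminus S(\p M)$. Here only $w\in C^\infty(\p_+SM;\C^n)$ is used. It remains to prove smoothness of $w_\psi$ near a glancing point $(x_0,v_0)\in S(\p M)$, and this is where the fold structure enters.

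As in the rest of the paper I would pass to a non-trapping, strictly convex extension $(M_1,g,E)$ with $M$ in the interior of $M_1$, so that $\phi_t$ is a flow on $SM_1$, and fix a boundary defining function $\rho$ for $M$ on $M_1$ with $|\nabla\rho|_g\equiv1$ and $\nabla\rho=\nu$ near $\p M$; set $h(x,v;t):=\rho(\gamma_{x,v}(t))$. As computed in the proof of Lemma~\ref{lem::tau is smooth on influx bundle}, $h(x_0,v_0;0)=0$, $\p_th(x_0,v_0;0)=\langle\nu,v_0\rangle=0$, and $\p_t^2h(x_0,v_0;0)=-\Lambda(x_0,v_0)+\langle E,\nu\rangle<0$ by strict convexity, so $t\mapsto h(x_0,v_0;t)$ vanishes to exact order two at $t=0$. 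The smooth Weierstrass preparation theorem then gives $h(x,v;t)=u(x,v;t)\,q(x,v;t)$ near $(x_0,v_0,0)$ with $u$ smooth and nowhere vanishing and $q(x,v;t)=t^2+p_1(x,v)t+p_0(x,v)$ a Weierstrass polynomial, $p_1,p_0$ smooth and vanishing at $(x_0,v_0)$. For $(x,v)\in SM$ near $(x_0,v_0)$ the two real roots of $q(x,v;\cdot)$ are precisely $\tau_+(x,v)\ge0$ and $-\tau_-(x,v)\le0$, so $\tau_+-\tau_-=-p_1$ and $\tau_+\tau_-=-p_0$ are smooth even though $\tau_\pm$ individually are not. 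Next, $\p(SM)=\{(x,v)\in SM_1:\rho(x)=0\}$ is a smooth hypersurface of $SM_1$ near $(x_0,v_0)$, and $\mathcal{A}w$ is smooth on it by hypothesis; extend $\mathcal{A}w$ to a smooth $\tilde W$ on a neighborhood of $(x_0,v_0)$ in $SM_1$ and put $g(x,v;t):=\tilde W(\phi_t(x,v))$, smooth near $(x_0,v_0,0)$. Using that $w_\psi$ is constant along orbits together with $\mathcal{A}w|_{\p_+SM}=w$ and $\mathcal{A}w|_{\p_-SM}=w\circ\mathcal{S}^{-1}$, one checks that $w_\psi(x,v)=g(x,v;-\tau_-(x,v))=g(x,v;\tau_+(x,v))$ for $(x,v)\in SM$ near $(x_0,v_0)$: flowing backward to $\p_+SM$ gives the first equality, flowing forward to $\p_-SM$ and applying $\mathcal{S}^{-1}$ gives the second.

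To conclude, apply the smooth division theorem to divide $g$ by $q$, obtaining $g(x,v;t)=Q(x,v;t)\,q(x,v;t)+r_1(x,v)\,t+r_0(x,v)$ near $(x_0,v_0,0)$ with $Q,r_1,r_0$ smooth. Evaluating at the roots $t=\tau_+(x,v)$ and $t=-\tau_-(x,v)$ of $q(x,v;\cdot)$ yields $w_\psi(x,v)=r_1(x,v)\tau_+(x,v)+r_0(x,v)=-r_1(x,v)\tau_-(x,v)+r_0(x,v)$ on $SM$ near $(x_0,v_0)$, whence $r_1(x,v)\big(\tau_+(x,v)+\tau_-(x,v)\big)\equiv0$ there. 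Since $\tau_++\tau_-=\sqrt{p_1^2-4p_0}$ is strictly positive on $SM\setminus S(\p M)$ and $S(\p M)$ has empty interior in $SM$, it follows that $r_1\equiv0$ on $SM$ near $(x_0,v_0)$, so $w_\psi=r_0|_{SM}$ there, the restriction of a smooth function and hence smooth. Together with the first paragraph this proves $w_\psi\in C^\infty(SM;\C^n)$. I expect the delicate step to be the bookkeeping of the glancing case: verifying that the Weierstrass polynomial $q$ has exactly $\{\tau_+,-\tau_-\}$ as roots with the correct signs, that $g$ really realizes $w_\psi$ at both roots, and that $\tau_++\tau_-$ is the nonnegative square root of the discriminant. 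Once the fold structure is correctly set up, the division argument itself is short.
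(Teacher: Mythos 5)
Your proof is correct, but it takes a genuinely different route from the paper's. The paper extends $(M,g,E)$ to a slightly larger strictly convex, non-trapping $(M_1,g,E)$, shows that the exit map $\Psi:\p(SM)\to\p_-SM_1$ is a Whitney fold with fold $S(\p M)$ (via the same computation $\bfG_E\rho=0$, $\bfG_E^2\rho\neq0$ at glancing points that you use), invokes H\"ormander's Theorem~C.4.4 to factor $\mathcal Aw=u\circ\Psi$ with $u$ smooth near $\Psi(\p(SM))$, and then writes $w_\psi=u\circ\Phi_{M_1}|_{SM}$, where $\Phi_{M_1}$ is the (smooth on $SM$) exit map of the enlarged manifold. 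What you do instead is unwind that black box: Malgrange preparation applied to $h(x,v;t)=\rho(\gamma_{x,v}(t))$ produces the quadratic Weierstrass polynomial whose roots are $\tau_+$ and $-\tau_-$, and the division theorem applied to the flowout $g(x,v;t)=\tilde W(\phi_t(x,v))$ of an extension of $\mathcal Aw$, evaluated at both roots, kills the linear remainder term and exhibits $w_\psi$ as the restriction of a smooth function. This is essentially the proof of H\"ormander C.4.4 specialized to the present situation and fused with the paper's final composition step. The paper's approach buys brevity by citing the fold machinery (and the argument of Dairbekov--Paternain--Stefanov--Uhlmann for the fold lemma); yours buys self-containedness, and it has the virtue of making explicit where the hypothesis on $\mathcal Aw$ is actually used -- the agreement of the two evaluations $g(x,v;-\tau_-)=g(x,v;\tau_+)=w_\psi(x,v)$ is precisely the invariance of $\mathcal Aw$ under the fold involution (entry $\leftrightarrow$ exit via $\mathcal S$), a point the paper leaves implicit when invoking C.4.4. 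The bookkeeping you flag as delicate (identifying the roots of $q$ with $\{\tau_+,-\tau_-\}$, positivity of $\tau_++\tau_-$ off $S(\p M)$, density of $SM\setminus S(\p M)$) all checks out under strict convexity and continuity of $\tau$.
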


For the proof, we use the notion of a Whitney fold. 

\begin{Definition}{\rm
Let $\mathcal M$ and $\mathcal N$ be smooth manifolds having the same dimension and let $f:\mathcal M\to\mathcal N$ be a smooth map. We say that $f$ is a \emph{Whitney fold}, with fold $L\subset \mathcal M$, at $m\in L$ if $\{x\in\mathcal M: d_x f\text{ is singular}\}$ is a smooth hypersurface near $m$ and $\ker(d_m f)$ is transverse to $T_m L$.
}\end{Definition}

Consider  a closed manifold $N$ containing $M$. Extend the metric $g$ and the vector field $E$ smoothly to $N$, preserving the former notations for extensions. Next, embed $M$ into the interior of a compact manifold $M_1\subset N$ with boundary. Choose $M_1$ to be sufficiently close to $M$ so that $(M_1,g,E)$ is also non-trapping and strictly convex.

For $(x,v)\in SM_1$, we denote by $\ell(x,v)$ the first non-negative time such that the thermostat geodesic $\gamma_{x,v}$ exits $M_1$.

\begin{Lemma}\label{lem::Psi have a fold}
The map $\Psi:\p(SM)\to\p_- SM_1$ defined as
$$
\Psi(x,v):=\big(\gamma_{x,v}(\ell(x,v)),\dot\gamma_{x,v}(\ell(x,v))\big),\quad (x,v)\in \p(SM),
$$
is a Whitney fold with fold $S(\p M)$.
\end{Lemma}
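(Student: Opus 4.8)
The plan is to realise $\Psi$ as the restriction to the hypersurface $\partial(SM)\subset SM_1$ of the globally defined ``flow out to $\partial M_1$'' map, which is a submersion whose fibres are the thermostat orbits; the fold structure of $\Psi$ then becomes a statement about how $\partial(SM)$ sits relative to the flow direction, and this is controlled by strict convexity. First I would check that $\Psi$ is a smooth map between manifolds of equal dimension $\dim(SM)-1$. Since $\partial M$ lies in the interior of $M_1$, the set $\partial(SM)$ lies in the interior of $SM_1$; as $(M_1,g,E)$ is non-trapping and strictly convex, the implicit-function-theorem argument from the proof of Lemma~\ref{lem::tau is smooth on influx bundle} shows that the exit time $\ell$ is smooth on a neighbourhood $\mathcal U$ of $\partial(SM)$ in $SM_1$. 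Hence $\widetilde\Psi(x,v):=\phi_{\ell(x,v)}(x,v)$ is a smooth map from $\mathcal U$ into the interior of $\partial_-SM_1$, and $\Psi=\widetilde\Psi|_{\partial(SM)}$.

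The key point is that $\widetilde\Psi$ is constant along thermostat orbits: from $\ell(\phi_s(x,v))=\ell(x,v)-s$ one gets $\widetilde\Psi\circ\phi_s=\widetilde\Psi$, so $\widetilde\Psi$ is a submersion with $\ker d_m\widetilde\Psi=\R\,\bfG_E(m)$ at every $m\in\mathcal U$. Consequently $d_m\Psi=d_m\widetilde\Psi|_{T_m\partial(SM)}$ is singular exactly when $\bfG_E(m)\in T_m\partial(SM)$. Writing $\partial(SM)=\{(x,v)\in SM_1:\rho(x)=0\}$ locally, with $\rho$ a boundary defining function for $\partial M$ normalised so that $\nabla\rho=\nu$ on $\partial M$, and using $d(\rho\circ\pi)(\bfG_E)(x,v)=\tfrac{d}{dt}\rho(\gamma_{x,v}(t))\big|_{t=0}=\langle v,\nu(x)\rangle$, one concludes that the singular set of $\Psi$ is precisely $S(\partial M)$. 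It is a smooth hypersurface of $\partial(SM)$, since $\partial(SM)$ is a sphere bundle over $\partial M$ and $S(\partial M)$ is the corank-one subbundle with equatorial fibre $\{v:\langle v,\nu\rangle=0\}$.

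It remains to verify the transversality condition. For $m=(x,v)\in S(\partial M)$ one has $\bfG_E(m)\in T_m\partial(SM)$, hence $\ker d_m\Psi=T_m\partial(SM)\cap\R\,\bfG_E(m)=\R\,\bfG_E(m)$, and I must show this line is transverse to $T_m(S(\partial M))$ in $T_m\partial(SM)$. Writing $S(\partial M)=\{(x,v)\in\partial(SM):g(x,v)=0\}$ with $g(x,v)=\langle v,\nabla\rho(x)\rangle$ (extending $\nu$ to $\nabla\rho$ near $\partial M$ so that $g$ is defined on a neighbourhood in $SM_1$), this amounts to $dg(\bfG_E(m))\neq0$. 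But $\tfrac{d}{dt}g(\phi_t(x,v))\big|_{t=0}=\tfrac{d^2}{dt^2}\rho(\gamma_{x,v}(t))\big|_{t=0}=\Hess_x\rho(v,v)+\langle E(x),\nu(x)\rangle=-\Lambda(x,v)+\langle E(x),\nu(x)\rangle$, which is strictly negative by strict convexity, hence nonzero. This yields the required transversality and completes the proof.

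The step I expect to be the main obstacle, and the one where the hypotheses genuinely enter, is this last transversality computation: strict convexity is exactly what forces the second derivative $\partial_t^2[\rho(\gamma_{x,v}(t))]|_{t=0}$ not to vanish on $S(\partial M)$. A mild subtlety to track is that the \emph{same} second-order quantity is used twice --- once (via vanishing of the first derivative) to identify the singular set as $S(\partial M)$, and once (via non-vanishing of the second derivative) for transversality of the kernel --- and one must extend $\nu$ off $\partial M$ so that $g$, and hence $dg(\bfG_E)$, makes sense on a full neighbourhood in $SM_1$. The remaining points --- smoothness of $\ell$ near $\partial(SM)$, flow-invariance of $\widetilde\Psi$, and the sub-bundle description of $S(\partial M)\subset\partial(SM)$ --- are routine.
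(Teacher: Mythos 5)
Your proposal is correct and follows essentially the same route as the paper: the paper's proof computes exactly the two quantities $\bfG_E(\rho)(x,v)=\<\nu(x),v\>_{g(x)}$ and $\bfG_E^2(\rho)(x,v)=\Hess_x\rho(v,v)+\<\nu(x),E(x)\>_{g(x)}$, observes that on $S(\p M)$ the first vanishes while the second is nonzero by strict convexity, and then delegates the remaining fold verification to Lemma~7.5 of the magnetic boundary-rigidity paper of Dairbekov--Paternain--Stefanov--Uhlmann. Your write-up simply carries out in full the submersion/singular-set/transversality argument that the paper outsources to that reference, and it does so correctly.
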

\begin{proof}
Take a smooth $\rho:M\to [0,\infty)$ such that $\rho^{-1}(0)=\p M$ and $|\nabla \rho|_{g}\equiv 1$ near $\p M$. Note that $\nabla\rho=\nu$. Then, as in the proof of Lemma~\ref{lem::tau is smooth on influx bundle}, it is straightforward to check that
$$
\bfG_E(\rho)(x,v)=\<\nu(x),v\>_{g(x)},\quad \bfG_E^2(\rho)(x,v)=\Hess_x\rho(v,v)+\<\nu(x),E(x)\>_{g(x)}
$$
for $(x,v)\in\p(SM)$. If we take $(x.v)\in S(\p M)$, then $\bfG_E(\rho)(x,v)=0$ and $\bfG_E^2(\rho)(x,v)\neq 0$ by the strict convexity assumption. We are in the same situation as in the proof of \cite[Lemma~7.5]{dairbekov2007boundary}. Repeating the arguments therein, we complete the proof.
\end{proof}

Now, we are ready to prove Theorem~\ref{th::characterization of smooth functions constant along the flow}.

\begin{proof}[Proof of Theorem~\ref{th::characterization of smooth functions constant along the flow}]
If $w_\psi\in C^\infty_\alpha(\p_+ SM;\C^n)$, then we have $\mathcal Aw=w_\psi|_{\p(SM)}\in C^\infty(\p(SM);\C^n)$. To prove the opposite implication, suppose $\mathcal Aw\in C^\infty(\p(SM);\C^n)$. Then Lemma~\ref{lem::Psi have a fold} and \cite[Theorem~C.4.4]{hormander1985analysisIII} imply the existence of a smooth function $u$ on a neighborhood of $\Psi(\p(SM))\subset \p_- SM_1$ such that $w=u\circ \Psi$.

Now, consider the maps $\Phi:SM\to\p_-SM$, $\Phi_{M_1}:SM_1\to\p_-SM_1$ given by
\begin{align*}
\Phi(x,v):&=\big(\gamma_{x,v}(\tau(x,v)),\dot\gamma_{x,v}(\tau(x,v))\big),\quad (x,v)\in SM,\\
\Phi_{M_1}(x,v):&=\big(\gamma_{x,v}(\ell(x,v)),\dot\gamma_{x,v}(\ell(x,v))\big),\quad (x,v)\in SM_1,
\end{align*}
respectively. Since $w_\psi=w\circ \mathcal S^{-1}\circ\Phi$, we have $w_\psi=u\circ \Psi\circ \mathcal S^{-1}\circ\Phi$. Observe also that $\Phi_{M_1}|_{SM}=\Psi\circ \mathcal S^{-1}\circ\Phi$. Therefore, we can write $w_\psi=u\circ \Phi_{M_1}|_{SM}$. Then the smoothness of $\Phi_{M_1}$ on $SM$ implies that $w_\psi$ is smooth on $SM$.
\end{proof}


\end{document}